\documentclass[12pt, a4paper]{article} 

\usepackage{amsmath, amsthm, amsfonts, amsbsy}
\usepackage{cite, mathtools, booktabs}

\usepackage{color, graphicx}
\usepackage{subfig}
\usepackage{epstopdf}

\newcommand{\me}{\mathrm{e}}
\newcommand{\mi}{\mathrm{i}}
\newcommand{\dif}{\mathrm{d}}

\renewcommand{\vec}[1]{\boldsymbol{#1}}

\newtheorem{remark}{Remark}[section]
\newtheorem{theorem}{Theorem}[section]

\newtheorem{problem}{Problem}[section]

\numberwithin{equation}{section}
\graphicspath{{Figures/}}

\begin{document}
\title{Solving the multi-frequency electromagnetic inverse source problem by the Fourier method}
\author{Guan Wang \footnote{School of Mathematics, Jilin University, Changchun, P. R. China.   {\em Email: wangguan@neepu.edu.cn}}
\and  Fuming Ma  \footnote{School of Mathematics, Jilin University, Changchun, P. R. China.  {\em Email: mfm@jlu.edu.cn}}
\and Yukun Guo \footnote{Corresponding author. Department of Mathematics, Harbin Institute of Technology, Harbin, P. R. China. {\em Email: ykguo@hit.edu.cn}}
\and Jingzhi Li \footnote{Department of Mathematics, Southern University of Science and Technology, Shenzhen, P. R. China. {\em Email: li.jz@sustc.edu.cn}}
}
\date{}

\maketitle

\begin{abstract}

This work is concerned with an inverse problem of identifying the current source distribution of the time-harmonic Maxwell's equations from multi-frequency measurements. Motivated by the Fourier method for the scalar Helmholtz equation and the polarization vector decomposition,  we propose a novel method for determining the source function in the full vector Maxwell's system. Rigorous mathematical justifications of the method are given and numerical examples are provided to demonstrate the feasibility and effectiveness of the method.\\

\noindent {\bf Keywords}: inverse source problem, Fourier method, time-harmonic, Maxwell's equations, multi-frequency
\end{abstract}

\section{Introduction}

Inverse source problems arise naturally in various application areas such as biomedical tomography, non-invasive detection, target tracking and antenna synthesis. In recent years there have been tremendous advances in the theoretical understanding and numerical treatment of inverse source problems, see, e.g., \cite{Badia, Devaney, Arridge, invmonk, Bao, Fokas,He, Anastasio,LiuU, Isakov} and the references therein for relevant studies.

The inverse source problems could be posed in the frequency or time domain. The inverse source problem of determining a source in the Helmholtz equation has been studied in \cite{Arridge, Lin, Devaney, HD, nara, Eller, four, BaoLLT}. For inverse source problems for time-harmonic Maxwell's system, we refer to He and Romanov \cite{He}, Ammari et al. \cite{Bao} and Albanese and Monk \cite{invmonk} for the investigation of the localization of brain activities. Rodr\'iguez et al. \cite{Rodriguez} studied the inverse source problem for the eddy current of Maxwell's equations. For recent works on dynamic inverse source problem of imaging the trajectory of a moving point source, we refer to \cite{GuoLLW, GuoLLW2}.

This work is concerned with the inverse source problem of determining the electric current in the time-harmonic Maxwell's system from the multifrequency near-field measurements. For increasing stability analysis concerning the inverse source problem with multi-frequencies, we refer to \cite{ChengIL, LY16, BLZ17}.

A numerical method based on the Fourier expansion of the source has been proposed to solve the multi-frequency inverse source problem for the Helmholtz equation in \cite{four}. This Fourier method has been extended to the far-field cases of acoustic inverse source problem \cite{WangGZL}. The Fourier method is easy to implement with computational efficiency. Our goal in this paper is to extend the Fourier method from the scalar Helmholtz equation to full vector Maxwell's equations. Due to the complexity of the profound structure of Maxwell's equations, both the theoretical analysis and numerical implementation in the current study are radically much more challenging than the scalar counterpart presented in \cite{four, WangGZL}. In particular, one would encounter much more complicated technical difficulties in establishing the stability estimates.

The rest of this paper is organized as follows. In the next section, we introduce the model problem and recall some suitable Sobolev spaces. Section 3 is devoted to approximating the source function by the Fourier expansion, a uniqueness result on the approximate source, and then a numerical method for solving the inverse source problem are presented. The result of error estimate of measurements with noise is derived in Section 4. Finally, several numerical examples are provided to show the effectiveness of our method in Section 5.


\section{Problem setting}

In this section, we shall first give a description of the multifrequency inverse source problem under consideration. Then, some notation of relevant Sobolev spaces will be reviewed as the prerequisites for the theoretical analysis in the subsequent sections.
Throughout this paper,  we adopt the convention of using bold and non-bold fonts for vector and scalar values, respectively.

\subsection{Model problem}

In this paper, we consider the inverse source problem of determining a radiating current density excitation $\vec{J}$ in the time-harmonic Maxwell's equations in $\mathbb{R}^3$
\begin{align}
  &  \nabla \times \vec{E}-\mi k \vec{H}=0, \label{eq:maxwell1} \\
   & \nabla \times \vec{H}+\mi k \vec{E}=\vec{J}, \label{eq:maxwell2}
\end{align}
with the Silver-M\"uller radiation condition
\begin{equation}\label{eq:radiation}
    \lim_{|\vec{x}|\to\infty}( \nabla \times \vec{E}\times \hat{\vec{x}}-\mi k\vec{E}  )=0,
\end{equation}
which holds uniformly for all directions $\hat{\vec{x}}=\vec{x}/|\vec{x}|$, with $\vec{x}=(x_1,x_2,x_3)\in \mathbb{R}^3$. Here $\vec{E}$, $\vec{H}$ and $\vec{J}$ denote, respectively, the electric field, the magnetic field and the current density vector. The wavenumber/frequency of the problem is defined by the positive constant $k=\omega\sqrt{\epsilon_0\mu_0}$, with $\omega$ denoting the angular frequency and $\epsilon_0$ and $\mu_0$ the electric permittivity and the magnetic permeability in vacuum. Eliminating $\vec{H}$ in \eqref{eq:maxwell1}-\eqref{eq:maxwell2} then the time-harmonic Maxwell's equations can be written as
\begin{equation}\label{eq:maxwell}
    \nabla \times \nabla \times \vec{E}-k^{2}\vec{E}=\mi k \vec{J}.
\end{equation}

We consider the multi-frequency inverse source problem with the following assumptions:\\
 (\romannumeral 1) The current density vector $\vec{J}$ is independent of $k$ and
 \begin{equation}\label{eq:supp}
    \vec{J}\in (L^2(\mathbb{R}^{3}))^3,\quad {\rm{supp}} \vec{J}\subset V_0,
 \end{equation}
where $V_0$ ia a cube and $V_0\subset B_R:=\{\vec{x}\in\mathbb{R}^3\mid |\vec{x}|<R\}$. \\
(\romannumeral 2) The current source $\vec{J}$ is expressed in the following form
\begin{equation}\label{eq:Jnon}
    \vec{J}=\vec{p}f+\vec{p}\times \nabla g,
\end{equation}
where $f\in L^2(V_0)$ and $g\in H^1(V_0)$. Here, the polarization vector $\vec{p}$ is assumed to be known and belongs to the following admissible set
\[
    \mathbb{P}:=\{\vec{q}\in\mathbb{R}^3|\ \vec{q}\times\vec{l}\neq \vec{0},\
                  \forall \vec{l}\in \mathbb{Z}^3 \backslash \{\vec{0}\} \}.
\]

In what follows, let $\Gamma_R:=\{\vec{x}\in\mathbb{R}^3\mid |\vec{x}|=R\}$ be the measurement surface. Then the multi-frequency inverse source problem can be stated as follows:
\begin{problem}[Multi-frequency Inverse Source Problem]
Given a fixed polarization vector $\vec{p}\in\mathbb{P}$ and a finite set $\mathbb{K}$ of admissible wavenumbers, reconstruct the source function $\vec{J}$ in the form \eqref{eq:Jnon} from the measured data
\[
 \{\vec{E}(\vec{x}; k, \vec{p})\mid \forall\vec{x}\in \Gamma_R,\ \forall k\in\mathbb{K}\}.
\]
where $\vec{E}(\vec{x}; k, \vec{p})$ indicates the dependence of the radiated field $\vec{E}(\vec{x})$ on the wavenumber $k$ and the polarization vector $\vec{p}$.
\end{problem}

In this paper, we derive a novel and effective numerical method for solving the multi-frequency inverse source problem. Our method is based on the Fourier expansion of the source function $\vec{J}$.  We prove that a finite number of Fourier coefficients can be uniquely determined from the measured fields  corresponding to a set of wavenumbers which are properly chosen. Meanwhile, we establish the explicit formula for each coefficient.


\subsection{Tangential vector spaces}\label{sec:sobolev_space}

In this subsection, we recall some essential ingredients for tangential vector Sobolev spaces and we refer to \cite{monk} for more relevant details.

Let $\Gamma\subset\mathbb{R}^3$ be a generic smooth and closed surface. Then the space of tangential vector fields is defined by
\[
    L_t^2({\Gamma}):=\{\vec{u}\in(L^2({\Gamma}))^3\mid \vec{u}\cdot \vec{\nu}=0\ {\rm{on}}\ \Gamma\},
\]
where $\vec{\nu}$ denotes the unit outward normal of $\Gamma$.

Let $\mathbb{S}^2:=\{\vec{x}\in\mathbb{R}^3\mid\ |\vec{x}|=1\}$ be the unit sphere and $\{\vec{e}_{r},\ \vec{e}_{\theta},\ \vec{e}_{\varphi} \}$ be the unit vectors of the spherical
coordinates, where $\vec{e}_{r}=\hat{\vec{x}}$, and $\{Y_{n}^{m}(\hat{\vec{x}}):\ m=-n,\cdots,n,\ n=0,1,2,\cdots\}$ are the spherical harmonics.

Denote the vector spherical harmonics
\[
   \vec{U}_{n}^{m}(\hat{\vec{x}})=\frac{1}{\sqrt {n(n+1)}}\nabla _{\mathbb{S}^2}
       Y_{n}^{m}(\hat{\vec{x}}) , \quad \vec{V}_{n}^{m}(\hat{\vec{x}})
       =\hat{\vec{x}}\times  \vec{U}_{n}^{m}(\hat{\vec{x}}),\quad \hat{\vec{x}}\in\mathbb{S}^2,
\]
where the surface gradient $\nabla _{\mathbb{S}^2}$ is defined by
\[
    \nabla _{\mathbb{S}^2}:=\frac{\partial}{\partial \theta}\vec{e}_{\theta}
    +\frac{1}{\sin{\theta}} \frac{\partial}{\partial \varphi}\vec{e}_{\varphi}.
\]
Then the set of all vector spherical harmonics $\{\vec{U}_{n}^{m},\ \vec{V}_{n}^{m}:\ m=-n,\cdots,n,\ n=1,2,\cdots\}$ form a complete orthonormal basis of $L^{2}_{t}(\mathbb{S}^2)$.

Now we introduce the space of three-dimensional vector functions by
\[
   H({\rm{curl}};B_R):=\{\vec{u}\in (L^2(B_R))^3\mid\nabla\times\vec{u}\in (L^2(B_R))^3\},
\]
and its trace space $H^{-1/2}({\rm{Div}};\Gamma_R)$.
We suppose $\vec{\zeta}\in L_t^2({\Gamma_R}) \cap H^{-1/2}({\rm{Div}};\Gamma_R)$ has representation
\[
    \vec{\zeta}(\vec{x})= \sum_{n=1}^{\infty}\sum_{m=-n}^{n}\Big(\alpha_{n,m}
         \vec{U}_n^m(\vec{\hat{x}}) +\beta_{n,m}\vec{V}_n^m(\vec{\hat{x}})\Big),
\]
then its norms in $L_t^2({\Gamma_R})$ and $H^{-1/2}({\rm{Div}};\Gamma_R)$ are defined by
\begin{align*}
   &\|\vec{\zeta}\|_{L_t^2({\Gamma_R})}:=\left(\sum_{n=1}^{\infty}\sum_{m=-n}^{n}
       \left(|\alpha_{n,m}|^2+|\beta_{n,m}|^2\right)\right)^{1/2},\\
   &\|\vec{\zeta}\|_{H^{-1/2}({\rm{Div}},\Gamma_R)}:=\left(\sum_{n=1}^{\infty}\sum_{m=-n}^n\left(\sqrt{n(n+1)}|\alpha_{n,m}|^{2}+\frac{|\beta_{n,m}|^2}{\sqrt{n(n+1)}}\right)\right)^{1/2}.
\end{align*}

In order to compute the data $\hat{\vec{x}}\times \nabla\times\vec{E}$ from $\hat{\vec{x}}\times \vec{E}$, we need the electric-to-magnetic Calder\'{o}n operator $G:H^{-1/2}({\rm{Div}};\Gamma_R)\to H^{-1/2}({\rm{Div}};\Gamma_R)$ which is defined by
\begin{equation}\label{eq:Calderon}
   G\vec{\zeta}:=\sum_{n=1}^{\infty}\sum_{m=-n}^n
       \left(k^2R\frac{h_{n}^{(1)}(kR)}{z_{n}^{(1)}(kR)}\beta_{n,m} \vec{U}_n^m
        +\frac{1}{R}\frac{z_{n}^{(1)}(kR)}{h_{n}^{(1)}(kR)}\alpha_{n,m}\vec{V}_n^m\right),
\end{equation}
where $h_n^{(1)}$ is the spherical Hankel function of the first kind of order $n$, and $z_n^{(1)}(t)=h_n^{(1)}(t)+th_n^{(1)\prime}(t)$.
\section{Fourier approximation and uniqueness}

In this section, we will approximate the functions $\vec{J}$ by Fourier expansions and establish explicit formulas for the Fourier coefficients.

We begin this section with some notation and definitions that are used in the paper. Without loss of generality, let
\[
    V_0:=\left(-\frac{L}{2},\frac{L}{2}\right)^3,
\]
and introduce the following Fourier basis functions
\[
   \phi_{\vec{l}}(\vec{x}):=\exp\left(\mi \frac{2\pi}{L}\vec{l}\cdot\vec{x}\right),\quad \vec{l}\in \mathbb{Z}^{3},
\]
then the functions $f\in L^2(V_0)$ and $g\in H^1(V_0)$ can be written as Fourier expansions of the forms
\[
    f=\sum_{|\vec{l}|\geq 0} a_{\vec{l}}\phi_{\vec{l}},\quad
    g=\sum_{|\vec{l}|\geq 1} b_{\vec{l}}\phi_{\vec{l}},
\]
with the Fourier coefficients
\begin{align}
    a_{\vec{l}}=\frac{1}{L^3}\int_{V_0}f(\vec{x})\overline{\phi_{\vec{l}}(\vec{x})}\,\dif \vec{x},
         \quad |\vec{l}|\geq 0, \label{eq:al}\\
    b_{\vec{l}}=\frac{1}{L^3}\int_{V_0}g(\vec{x})\overline{\phi_{\vec{l}}(\vec{x})}\,\dif \vec{x}, \quad |\vec{l}|\geq 1, \label{eq:bl}
\end{align}
where the overbar denotes the complex conjugate. Thus the source $\vec{J}$ has the following form

\begin{equation}\label{eq:Jexpan}
    \vec{J}=\vec{p}f+\vec{p}\times \nabla g
                =a_0\vec{p}+\sum_{|\vec{l}|\geq 1} \left(a_{\vec{l}}\vec{p}+
                \frac{2\pi\mi}{L} b_{\vec{l}}(\vec{p}\times \vec{l})\right)\phi_{\vec{l}}.
\end{equation}
We introduce the Sobolev space of order $\sigma>0$
\begin{align*}
(H^{\sigma}_{\vec{p}}(V_0))^3:=\{&\vec{\Phi}\in(H^{\sigma}(V_0))^3\mid
              \vec{\Phi}=\vec{p}f+\vec{p}\times \nabla g, \\
              & f\in H^{\sigma}(V_0),\ g\in H^{\sigma+1}(V_0),\ \vec{p}\in\mathbb{S}^2 \},
\end{align*}
with the norm
\begin{equation}\label{eq:Jnorm}
\|\vec{\Phi}\|_{\vec{p},\sigma}=\left( L^3\sum_{|\vec{l}|\geq0}(1+|\vec{l}|^2)^{\sigma}|a_{\vec{l}}|^2+4\pi^2L
\sum_{|\vec{l}|\geq1}(1+|\vec{l}|^2)^{\sigma}|\vec{p}\times \vec{l}|^2|b_{\vec{l}}|^2\right)^{1/2},
\end{equation}
where $\vec{\Phi}\in  (H^{\sigma}_{\vec{p}}(V_0))^3$ has the Fourier expansion of the form
\[
\vec{\Phi}=a_0\vec{p}+\sum_{|\vec{l}|\geq1} \left(a_{\vec{l}}\vec{p}+
                \frac{2\pi\mi}{L} b_{\vec{l}}(\vec{p}\times \vec{l})\right)\phi_{\vec{l}}.
\]

Now we introduce the definition of admissible polarization vectors in the source function (\ref{eq:Jnon}).


For an admissible polarization vector $\vec{p}\in\mathbb{P}$, the decomposition
\begin{align}
    \vec{p}&=\frac{\vec{p}\cdot\vec{l}}{|\vec{l}|^2}\vec{l}
    +\frac{\vec{p}\cdot\vec{l}\times(\vec{p}\times\vec{l})}{|\vec{l}\times(\vec{p}\times\vec{l})|^2}
    \vec{l}\times(\vec{p}\times\vec{l})\notag\\
    &=\frac{\vec{p}\cdot\vec{l}}{|\vec{l}|^2}\vec{l}+\frac{1}{|\vec{l}|^2}\vec{l}
    \times(\vec{p}\times\vec{l})\label{eq:decu}
\end{align}
is crucial for computing the Fourier coefficients. For simplicity, we denote $\vec{v}_{\vec{l}}=\vec{p}\times\vec{l},\ \vec{w}_{\vec{l}}=\vec{l}\times(\vec{p}\times\vec{l})$, and it is clear that $\{\vec{l},\vec{v}_{\vec{l}},\vec{w}_{\vec{l}}\}$ form an orthogonal basis of $\mathbb{R}^3$. A direct computation shows that
\begin{align}
    \nabla\times\nabla\times(\vec{u}_{\vec{l}}\phi_{\vec{l}})
    =-\Delta(\vec{u}_{\vec{l}}\phi_{\vec{l}})+\nabla\nabla\cdot(\vec{u}_{\vec{l}}\phi_{\vec{l}})
    =\frac{4\pi^2}{L^2}|\vec{l}|^2 \vec{u}_{\vec{l}}\phi_{\vec{l}},\label{eq:k1}
\end{align}
where $\vec{u}_{\vec{l}}=\vec{v}_{\vec{l}}$ or $\vec{w}_{\vec{l}}$.
\par
Combining $\eqref{eq:Jexpan}$ and $\eqref{eq:decu}$, we obtain
    \begin{align}
    \vec{J}=&\vec{p}f+\vec{p}\times \nabla g\notag\\
           =&a_{\vec{0}}\vec{p}
           +\sum_{|\vec{l}|\geq1}\left( \frac{a_{\vec{l}}}{|\vec{l}|^2}
                \left((\vec{p}\cdot\vec{l})\vec{l}+\vec{w}_{\vec{l}}\right)
           +\frac{2\pi\mi}{L}b_{\vec{l}}\vec{v}_{\vec{l}}\right)\phi_{\vec{l}}.\label{eq:Jnew}
    \end{align}
We can approximate $\vec{J}$ by the truncated Fourier expansion
\begin{align}
    \vec{J_N}=a_{\vec{0}}\vec{p}
              +\sum_{1\leq|\vec{l}|\leq N}\left( \frac{a_{\vec{l}}}{|\vec{l}|^2}
                \left((\vec{p}\cdot\vec{l})\vec{l}+\vec{w}_{\vec{l}}\right)
              +\frac{2\pi\mi}{L}b_{\vec{l}}\vec{v}_{\vec{l}}\right)\phi_{\vec{l}}.\label{eq:JN}
\end{align}
Then we have the following approximation result.
\begin{theorem}\label{th:JJNerr}
Let $\vec{J}$ be a function in $(H^{\sigma}_{\vec{p}}(V_0))^3, \sigma>0$, then the following estimate holds
\begin{equation}\label{eq:N_estimate}
    \|\vec{J}-\vec{J}_N\|_{\vec{p},\mu}\leq N^{\mu-\sigma} \|\vec{J}\|_{\vec{p},\sigma},\quad 0\leq\mu\leq\sigma.
\end{equation}
\end{theorem}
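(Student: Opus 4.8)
The plan is to reduce \eqref{eq:N_estimate} to a term-by-term comparison of weighted $\ell^2$ series, exploiting that $\vec{J}_N$ is nothing but the truncation of the expansion \eqref{eq:Jnew}. First I would note that the constant term $a_{\vec{0}}\vec{p}$ cancels in $\vec{J}-\vec{J}_N$, so that by \eqref{eq:Jnew}--\eqref{eq:JN},
\[
\vec{J}-\vec{J}_N=\sum_{|\vec{l}|>N}\left(\frac{a_{\vec{l}}}{|\vec{l}|^2}\left((\vec{p}\cdot\vec{l})\vec{l}+\vec{w}_{\vec{l}}\right)+\frac{2\pi\mi}{L}b_{\vec{l}}\vec{v}_{\vec{l}}\right)\phi_{\vec{l}}.
\]
Using the orthogonal decomposition \eqref{eq:decu}, this is exactly the element of $(H^{\mu}_{\vec{p}}(V_0))^3$ whose underlying scalar Fourier data $\{a_{\vec{l}}\}, \{b_{\vec{l}}\}$ agree with those of $f$ and $g$ for $|\vec{l}|>N$ and vanish otherwise. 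Hence, directly from the definition \eqref{eq:Jnorm},
\[
\|\vec{J}-\vec{J}_N\|_{\vec{p},\mu}^2=L^3\sum_{|\vec{l}|>N}(1+|\vec{l}|^2)^{\mu}|a_{\vec{l}}|^2+4\pi^2L\sum_{|\vec{l}|>N}(1+|\vec{l}|^2)^{\mu}|\vec{p}\times\vec{l}|^2|b_{\vec{l}}|^2.
\]

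Next I would insert the elementary bound that drives the estimate: for $\vec{l}\in\mathbb{Z}^3$ with $|\vec{l}|>N$ we have $1+|\vec{l}|^2>N^2$, and since $\mu-\sigma\le0$ this yields $(1+|\vec{l}|^2)^{\mu-\sigma}\le N^{2(\mu-\sigma)}$, so that
\[
(1+|\vec{l}|^2)^{\mu}\le N^{2(\mu-\sigma)}(1+|\vec{l}|^2)^{\sigma}\qquad\text{for all }|\vec{l}|>N.
\]
Substituting this into both tail sums and then extending the ranges back to all $|\vec{l}|\ge0$ and $|\vec{l}|\ge1$ respectively gives $\|\vec{J}-\vec{J}_N\|_{\vec{p},\mu}^2\le N^{2(\mu-\sigma)}\|\vec{J}\|_{\vec{p},\sigma}^2$; taking square roots finishes the proof.

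There is essentially no analytic difficulty here; the only step requiring a little care is the bookkeeping in the first paragraph, namely verifying that truncating the $\vec{p}$-adapted expansion \eqref{eq:Jnew} at $|\vec{l}|\le N$ is consistent with the weighted norm \eqref{eq:Jnorm}, i.e. that the coefficients of $\vec{J}-\vec{J}_N$ in that representation really are the high-frequency tails of $a_{\vec{l}}$ and $b_{\vec{l}}$. It is also worth recording that $\vec{J}_N$, being a finite sum, lies in $(H^{\sigma}_{\vec{p}}(V_0))^3$, so every norm appearing is finite and the manipulations are justified.
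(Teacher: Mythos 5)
Your proposal is correct and follows essentially the same route as the paper: identify $\vec{J}-\vec{J}_N$ with the tail $|\vec{l}|>N$ of the expansion, write its $\|\cdot\|_{\vec{p},\mu}$ norm as the corresponding tail sum from \eqref{eq:Jnorm}, and bound $(1+|\vec{l}|^2)^{\mu}\le N^{2(\mu-\sigma)}(1+|\vec{l}|^2)^{\sigma}$ on that range before enlarging back to the full sum. The only cosmetic difference is that the paper keeps the intermediate factor $(1+N^2)^{-(\sigma-\mu)}$ before relaxing it to $N^{2(\mu-\sigma)}$, whereas you pass to $N^{2(\mu-\sigma)}$ directly; both are valid.
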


\begin{proof}
From $\eqref{eq:Jnorm},\eqref{eq:Jnew}$ and $\eqref{eq:JN}$, we obtain
\begin{align}
    &\|\vec{J}-\vec{J}_N\|^2_{\vec{p},\mu}\notag\\
    =&\sum_{|\vec{l}|> N}\left(1+|\vec{l}|^2\right)^{\mu}
      \left( L^3|a_{\vec{l}}|^2+4\pi^2L|\vec{p}\times \vec{l}|^2|b_{\vec{l}}|^2\right)\notag\\
    \leq&\frac{1}{\left(1+N^2\right)^{\sigma-\mu}}\sum_{|\vec{l}|> N}
      \left(1+|\vec{l}|^2\right)^{\sigma} \left( L^3|a_{\vec{l}}|^2+4\pi^2L|\vec{p}
      \times \vec{l}|^2|b_{\vec{l}}|^2\right)\notag\\
    \leq&\frac{1}{\left(1+N^2\right)^{\sigma-\mu}} \|\vec{J}\|_{\vec{p},\sigma}^2,\notag
\end{align}
which leads to the estimate $\eqref{eq:N_estimate}$.
\end{proof}

Motivated by Theorem \ref{th:JJNerr}, the inverse source problem under concern is to determine an approximate source $\vec{J}_N$  in the form (\ref{eq:JN}) with a fixed  admissible polarization vector $\vec{p}\in \mathbb{P}$ from the measurements $\{\hat{\vec{x}}\times \vec{E}(\cdot;k_j, \vec{p})\}$ on $\Gamma_R$ at a finite set of wavenumbers $\{k_j\}$. To this end, we will establish explicit formulas for the Fourier coefficients $a_{\vec{l}}$ and $b_{\vec{l}}$. Before we show that, we first state a uniqueness result.

\begin{theorem}\label{th:ab}
Let $\vec{p}\in \mathbb{P}$ and $\mathbb{K}_N:=\{2\pi |\vec{l}|/L\mid \vec{l}\in \mathbb{Z}^{3}, 1\leq |\vec{l}| \leq N \}$, then the Fourier coefficients $\{a_{\vec{l}}\}_{1\leq|\vec{l}|\leq N}$ and $\{b_{\vec{l}}\}_{1\leq|\vec{l}|\leq N}$ of $\vec{J}$ in $\eqref{eq:al}$ and $\eqref{eq:bl}$ can be uniquely determined by the measurements $\{\vec{E}(\vec{x}; k, \vec{p})\mid \vec{x}\in \Gamma_R, k\in \mathbb{K}_N\}$.
\end{theorem}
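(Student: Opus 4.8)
The plan is to set up a Green-type reciprocity identity pairing the radiated field $\vec{E}$ with suitably chosen homogeneous solutions of the vector curl--curl operator, and then to select those test solutions so that the resulting source moments isolate the individual coefficients $a_{\vec{l}}$ and $b_{\vec{l}}$. \emph{Step~1 (reciprocity identity).} Let $\vec{\psi}$ be any smooth field satisfying $\nabla\times\nabla\times\vec{\psi}-k^{2}\vec{\psi}=\vec{0}$ in a neighbourhood of $\overline{B_R}$. Applying the vector Green formula for the operator $\nabla\times\nabla\times$ on $B_R$ to the pair $(\vec{E},\vec{\psi})$, using the field equation \eqref{eq:maxwell}, observing that the $k^{2}\vec{E}\cdot\vec{\psi}$ terms cancel and that ${\rm supp}\,\vec{J}\subset V_0\subset B_R$, I would arrive at
\[
\mi k\int_{V_0}\vec{J}\cdot\vec{\psi}\,\dif\vec{x}
=\int_{\Gamma_R}\Big[(\hat{\vec{x}}\times\vec{E})\cdot(\nabla\times\vec{\psi})
+\vec{\psi}\cdot(\hat{\vec{x}}\times\nabla\times\vec{E})\Big]\,\dif s ,
\]
the boundary integral being rewritten with the scalar triple-product rule. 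Since the source does not meet $\Gamma_R$, the field $\vec{E}$ is a radiating solution of the homogeneous Maxwell system near $\Gamma_R$, so that $\hat{\vec{x}}\times\nabla\times\vec{E}=G(\hat{\vec{x}}\times\vec{E})$ with $G$ the Calderón operator \eqref{eq:Calderon}; consequently the right-hand side above is an explicit linear functional of the measured trace $\vec{E}(\cdot\,;k,\vec{p})|_{\Gamma_R}$.

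\emph{Step~2 (choice of test functions).} Fix $\vec{l}\in\mathbb{Z}^{3}$ with $1\le|\vec{l}|\le N$ and set $k=2\pi|\vec{l}|/L\in\mathbb{K}_N$. I would take the two test functions $\vec{\psi}=\vec{v}_{\vec{l}}\,\overline{\phi_{\vec{l}}}$ and $\vec{\psi}=\vec{w}_{\vec{l}}\,\overline{\phi_{\vec{l}}}$. Because $\vec{v}_{\vec{l}}$ and $\vec{w}_{\vec{l}}$ are orthogonal to $\vec{l}$, the computation \eqref{eq:k1} yields $\nabla\times\nabla\times\vec{\psi}=(4\pi^{2}|\vec{l}|^{2}/L^{2})\vec{\psi}=k^{2}\vec{\psi}$, so each is admissible in Step~1. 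Substituting the expansion \eqref{eq:Jnew} of $\vec{J}$ and using the orthogonality $\int_{V_0}\phi_{\vec{l}'}\,\overline{\phi_{\vec{l}}}\,\dif\vec{x}=L^{3}\delta_{\vec{l}',\vec{l}}$ together with the orthogonality of $\{\vec{l},\vec{v}_{\vec{l}},\vec{w}_{\vec{l}}\}$ and $|\vec{w}_{\vec{l}}|^{2}=|\vec{l}|^{2}|\vec{p}\times\vec{l}|^{2}$, the left-hand side of the reciprocity identity collapses to one term:
\[
\int_{V_0}\vec{J}\cdot(\vec{v}_{\vec{l}}\,\overline{\phi_{\vec{l}}})\,\dif\vec{x}
=2\pi\mi\, L^{2}\,|\vec{p}\times\vec{l}|^{2}\,b_{\vec{l}},
\qquad
\int_{V_0}\vec{J}\cdot(\vec{w}_{\vec{l}}\,\overline{\phi_{\vec{l}}})\,\dif\vec{x}
=L^{3}\,|\vec{p}\times\vec{l}|^{2}\,a_{\vec{l}} .
\]

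\emph{Step~3 (conclusion).} Because $\vec{p}\in\mathbb{P}$ forces $|\vec{p}\times\vec{l}|^{2}>0$ for every $\vec{l}\in\mathbb{Z}^{3}\setminus\{\vec{0}\}$, combining Steps~1 and~2 gives closed-form expressions for each $a_{\vec{l}}$ and $b_{\vec{l}}$ with $1\le|\vec{l}|\le N$, written purely in terms of $\vec{E}(\cdot\,;k,\vec{p})|_{\Gamma_R}$ at the single wavenumber $k=2\pi|\vec{l}|/L\in\mathbb{K}_N$ (these are, in fact, the reconstruction formulas to be exploited later). In particular, if the data vanishes on $\Gamma_R$ for every $k\in\mathbb{K}_N$, then $a_{\vec{l}}=b_{\vec{l}}=0$ for all $1\le|\vec{l}|\le N$, which is precisely the claimed uniqueness.

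The bookkeeping in Step~2 is routine linear algebra; the delicate point, and the main obstacle, is making Step~1 rigorous: justifying the integration by parts for $\vec{E}\in H({\rm curl};B_R)$, interpreting the two surface integrals as the correct duality pairings on $H^{-1/2}({\rm Div};\Gamma_R)$ and its dual, and verifying the Calderón relation $\hat{\vec{x}}\times\nabla\times\vec{E}=G(\hat{\vec{x}}\times\vec{E})$ on $\Gamma_R$, which relies on $\vec{E}$ being an entire radiating solution of the homogeneous Maxwell system in the exterior of ${\rm supp}\,\vec{J}$.
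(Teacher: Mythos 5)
Your proposal is correct and follows essentially the same route as the paper: the Green/reciprocity identity for the curl--curl operator on $B_R$, the Calder\'on operator to recover $\hat{\vec{x}}\times\nabla\times\vec{E}$ from the tangential trace, and the test fields $\vec{v}_{\vec{l}}\overline{\phi_{\vec{l}}}$, $\vec{w}_{\vec{l}}\overline{\phi_{\vec{l}}}$ at $k=2\pi|\vec{l}|/L$ to isolate $b_{\vec{l}}$ and $a_{\vec{l}}$ via the orthogonality of $\{\vec{l},\vec{v}_{\vec{l}},\vec{w}_{\vec{l}}\}$. Your source moments even carry the correct factor $|\vec{v}_{\vec{l}}|^2=|\vec{p}\times\vec{l}|^2$ in the $a_{\vec{l}}$ identity, matching the paper's final formulas \eqref{eq:al_form}--\eqref{eq:bl_form}.
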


\begin{proof}
Let $\vec{J}$ be a source that produces the data $\{\hat{\vec{x}}\times \vec{E}(\vec{x}; k, \vec{p})\mid \vec{x}\in \Gamma_R, k\in \mathbb{K}_N, \vec{p}\in\mathbb{P}\}$. Using the electric-to-magnetic Calder\'{o}n operator \cite[p.249]{monk}, we know that the source also produces the data $\{\hat{\vec{x}}\times \nabla\times\vec{E}(\vec{x}; k, \vec{p})\mid \vec{x}\in \Gamma_R, k\in \mathbb{K}_N\}$ .

For every $\vec{l}(1\leq |\vec{l}| \leq N)$ and $k=2\pi |\vec{l}|/L$, multiplying equation $\eqref{eq:maxwell}$ by $\vec{v}_{\vec{l}}\overline{\phi_{\vec{l}}(\vec{x})}$ and $\vec{w}_{\vec{l}}\overline{\phi_{\vec{l}}(\vec{x})}$ and integrating over $V_0$, then using $\eqref{eq:k1}$, we obtain
\begin{align*}
     &\mi k\int_{V_0}\vec{J}(\vec{x})\cdot \vec{v}_{\vec{l}}\overline{\phi_{\vec{l}}(\vec{x})}\,\dif \vec{x} \\
    =&\int_{B_R}( \nabla \times \nabla \times \vec{E}(\vec{x})-k^{2}\vec{E}(\vec{x}))\cdot \vec{v}_{\vec{l}}\overline{\phi_{\vec{l}}(\vec{x})}\,\dif \vec{x} \\
    =&\int_{B_R}\vec{E}(\vec{x})\cdot \left(\nabla \times \nabla \times( \vec{v}_{\vec{l}}\overline{\phi_{\vec{l}}(\vec{x})} ) -k^2\vec{v}_{\vec{l}}\overline{\phi_{\vec{l}}(\vec{x})}\right)\dif \vec{x} \\
    &+\int_{\Gamma_R} \left( \hat{\vec{x}}\times \nabla \times \vec{E}(\vec{x}) \cdot (\vec{v}_{\vec{l}}\overline{\phi_{\vec{l}}(\vec{x})})
    +\hat{\vec{x}}\times  \vec{E}(\vec{x}) \cdot \nabla\times(\vec{v}_{\vec{l}}\overline{\phi_{\vec{l}}(\vec{x})})\right)\dif s(\vec{x}) \\
    =&\int_{\Gamma_R} \left( \hat{\vec{x}}\times \nabla \times \vec{E}(\vec{x}) \cdot (\vec{v}_{\vec{l}}\overline{\phi_{\vec{l}}(\vec{x})})
    +\hat{\vec{x}}\times  \vec{E}(\vec{x}) \cdot \nabla\times(\vec{v}_{\vec{l}}\overline{\phi_{\vec{l}}(\vec{x})}) \right)\dif s(\vec{x}),
\end{align*}
and
\begin{align*}
     &\mi k\int_{V_0}\vec{J}(\vec{x})\cdot \vec{w}_{\vec{l}}\overline{\phi_{\vec{l}}(\vec{x})}\,\dif \vec{x} \\
    =&\int_{\Gamma_R} \left( \hat{\vec{x}}\times \nabla \times \vec{E}(\vec{x}) \cdot (\vec{w}_{\vec{l}}\overline{\phi_{\vec{l}}(\vec{x})})
    +\hat{\vec{x}}\times  \vec{E}(\vec{x}) \cdot \nabla\times(\vec{w}_{\vec{l}}\overline{\phi_{\vec{l}}(\vec{x})})\right )\dif s(\vec{x}).
\end{align*}
On the other hand, using $\eqref{eq:Jexpan}$, we have
\begin{align*}
     &\int_{V_0}\vec{J}(\vec{x})\cdot (\vec{w}_{\vec{l}}\overline{\phi_{\vec{l}}(\vec{x})})\,\dif \vec{x}=L^3 |\vec{w}_{\vec{l}}|^2 a_{\vec{l}},\\
     &\int_{V_0}\vec{J}(\vec{x})\cdot (\vec{v}_{\vec{l}}\overline{\phi_{\vec{l}}(\vec{x})})\,\dif \vec{x}=2\pi\mi L^2 |\vec{v}_{\vec{l}}|^2 b_{\vec{l}}.
\end{align*}
Then, we obtain
\begin{align}
     a_{\vec{l}}=&\frac{1}{\mi k|\vec{v}_{\vec{l}}|^2 L^3}
     \int_{\Gamma_R} \bigg( \hat{\vec{x}}\times \nabla \times \vec{E}(\vec{x}) \cdot (\vec{w}_{\vec{l}}\overline{\phi_{\vec{l}}(\vec{x})}) \notag \\
     &+\hat{\vec{x}}\times\vec{E}(\vec{x}) \cdot \nabla\times(\vec{w}_{\vec{l}}\overline{\phi_{\vec{l}}(\vec{x})}) \bigg) \dif s(\vec{x}), \label{eq:al_form} \\
     b_{\vec{l}}=&-\frac{1}{2\pi k L^2 |\vec{v}_{\vec{l}}|^2}
     \int_{\Gamma_R} \bigg( \hat{\vec{x}}\times \nabla \times \vec{E}(\vec{x}) \cdot (\vec{v}_{\vec{l}}\overline{\phi_{\vec{l}}(\vec{x})})   \notag\\
    &+\hat{\vec{x}}\times \vec{E}(\vec{x}) \cdot \nabla\times(\vec{v}_{\vec{l}}\overline{\phi_{\vec{l}}(\vec{x})}) \bigg)\dif s(\vec{x}), \label{eq:bl_form}
\end{align}
then the Fourier coefficients $\{a_{\vec{l}}\}_{1\leq|\vec{l}|\leq N}$ and $\{b_{\vec{l}}\}_{1\leq|\vec{l}|\leq N}$ of $\vec{J}$ are uniquely determined. This completes the proof.
\end{proof}
\par
It is clear that we can reconstruct all Fourier coefficients satisfying $1\leq|\vec{l}|\leq N$ from the measurements on the wavenumbers in $\mathbb{K}_N$ by $\eqref{eq:al_form}$ and $\eqref{eq:bl_form}$. Now we are in a position to introduce the numerical algorithm to reconstruct $a_{\vec{0}}$.
\begin{theorem}\label{th:a0error}
Let $\lambda<1/2$ be a small positive constant. 
Take $\vec{l}^{*}=(\lambda,0,0)^{\top}$,  $\vec{w}^{*}=\vec{l}^{*}\times(\vec{p}\times\vec{l}^{*})$, $k^*=2\pi\lambda/L$ and
\[
  \phi_{\vec{l}^*}(\vec{x}):=\exp{\left(\mi \frac{2\pi}{L}\vec{l}^*\cdot\vec{x}\right)}.
\]
Let  $a_{\vec{0}}^N$ be defined as
\begin{align}
    a_{\vec{0}}^N:= &\frac{\lambda\pi}{\sin{\lambda\pi}} \bigg(\frac{1}{\mi k^*L^3|\vec{p}\times\vec{l}^*|^2}
    \int_{\Gamma_R} \Big( \hat{\vec{x}}\times \nabla \times \vec{E} \cdot (\vec{w}^*\overline{\phi_{\vec{l}^*}(\vec{x})})\notag \\
    &+\hat{\vec{x}}\times  \vec{E} \cdot \nabla\times(\vec{w}^*\overline{\phi_{\vec{l}^*}(\vec{x})}) \Big)\dif s(\vec{x})
    -\sum^{N}_{|j|=1}\frac{\sin{(j-\lambda)\pi}}{(j-\lambda)\pi}a_{j}\bigg),\label{eq:a0N}
\end{align}
where $a_{j}^{\delta}=a_{\vec{l}}^{\delta}$ for $\vec{l}=(j,0,0)^{\top}$.
Then the following estimate holds
\begin{align}
    |a_{\vec{0}}-a_{\vec{0}}^N|\leq \frac{2\lambda}{\sqrt{N}L^{3/2}}\|\vec{J}\|_{\vec{p},0}.\label{eq:a0a0N}
    \end{align}
\end{theorem}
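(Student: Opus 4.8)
The plan is to exploit the exact integral identity for Fourier coefficients, derived in the proof of Theorem~\ref{th:ab}, but now applied with the \emph{non-integer} index $\vec{l}^*=(\lambda,0,0)^\top$. The key observation is that the identity
\[
   \mi k\int_{V_0}\vec{J}(\vec{x})\cdot(\vec{w}^*\overline{\phi_{\vec{l}^*}(\vec{x})})\,\dif\vec{x}
   =\int_{\Gamma_R}\Big(\hat{\vec{x}}\times\nabla\times\vec{E}\cdot(\vec{w}^*\overline{\phi_{\vec{l}^*}})
   +\hat{\vec{x}}\times\vec{E}\cdot\nabla\times(\vec{w}^*\overline{\phi_{\vec{l}^*}})\Big)\dif s(\vec{x})
\]
continues to hold for $k=k^*=2\pi\lambda/L$, because the Helmholtz-type relation \eqref{eq:k1} used in that derivation only needs $|\vec{l}^*|^2=\lambda^2$ to match $k^{*2}L^2/(4\pi^2)$, which it does by construction, and because $\vec{w}^*\overline{\phi_{\vec{l}^*}}$ and its curl are still smooth test fields on $B_R$. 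So the boundary integral appearing in \eqref{eq:a0N} equals $\mi k^* L^3 |\vec{v}_{\vec{l}^*}|^2$ times the \emph{continuous} coefficient $\widehat{(\vec{J}\cdot\vec{w}^*/|\vec{w}^*|^2)}$ evaluated at frequency $2\pi\lambda/L$ along the first axis. Since $\vec{J}=\vec{p}f+\vec{p}\times\nabla g$ and $\vec{w}^*\perp\vec{v}^*=\vec{p}\times\vec{l}^*$, only the $f$-part survives: the boundary integral divided by $\mi k^* L^3|\vec{v}_{\vec{l}^*}|^2$ equals $\dfrac{1}{L^3}\int_{V_0}f(\vec{x})\overline{\phi_{\vec{l}^*}(\vec{x})}\,\dif\vec{x}$.

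The next step is to expand $f=\sum_{\vec{l}}a_{\vec{l}}\phi_{\vec{l}}$ inside that integral and integrate termwise over $V_0=(-L/2,L/2)^3$. Because $\vec{l}^*$ has only a first component $\lambda$, the $x_2,x_3$ integrations force the second and third components of $\vec{l}$ to vanish, leaving a one-dimensional sum over $\vec{l}=(j,0,0)^\top$, $j\in\mathbb{Z}$, with
\[
   \frac{1}{L}\int_{-L/2}^{L/2}\exp\!\Big(\mi\frac{2\pi}{L}(j-\lambda)x_1\Big)\dif x_1
   =\frac{\sin\big((j-\lambda)\pi\big)}{(j-\lambda)\pi}.
\]
Thus the boundary integral term in \eqref{eq:a0N}, before multiplication by $\lambda\pi/\sin\lambda\pi$, equals $\sum_{j\in\mathbb{Z}}\dfrac{\sin((j-\lambda)\pi)}{(j-\lambda)\pi}\,a_j$. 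Subtracting the finite piece $\sum_{1\le|j|\le N}$ (with exact coefficients) and multiplying by the normalizing factor $\lambda\pi/\sin\lambda\pi$ — whose job is exactly to cancel the $j=0$ weight $\sin(-\lambda\pi)/(-\lambda\pi)=\sin\lambda\pi/(\lambda\pi)$ — gives
\[
   a_{\vec{0}}^N = a_{\vec{0}} + \frac{\lambda\pi}{\sin\lambda\pi}\sum_{|j|>N}\frac{\sin((j-\lambda)\pi)}{(j-\lambda)\pi}\,a_j .
\]
(Here I assume, as the statement implicitly does when it writes $a_j$ rather than $a_j^\delta$, that the coefficients are exact; the noisy version is the subject of Section~4.)

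It then remains to bound the tail. Using $|\sin((j-\lambda)\pi)|=|\sin\lambda\pi|\le\lambda\pi$ (valid for $0<\lambda<1/2$) in the numerator and $|j-\lambda|\ge|j|-\lambda\ge|j|/2$ for $|j|\ge1$, each summand is at most $\dfrac{\lambda\pi}{\sin\lambda\pi}\cdot\dfrac{\lambda\pi}{(|j|/2)\pi}\,|a_j|=\dfrac{2\lambda^2\pi}{\sin\lambda\pi}\cdot\dfrac{|a_j|}{|j|}$; one can simplify using $\sin\lambda\pi\ge\lambda\pi\cdot(2/\pi)\cdot(\pi/2)$... more cleanly, $\lambda\pi/\sin\lambda\pi\le \pi/2 \cdot 1$ is not quite what we want, so instead keep $|\sin((j-\lambda)\pi)/\sin\lambda\pi|\cdot(\lambda\pi)=\lambda\pi$ and get summand $\le \lambda\cdot\dfrac{2|a_j|}{|j|}$. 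Then Cauchy--Schwarz gives
\[
   |a_{\vec{0}}-a_{\vec{0}}^N|\le 2\lambda\Big(\sum_{|j|>N}\frac{1}{j^2}\Big)^{1/2}\Big(\sum_{|j|>N}|a_j|^2\Big)^{1/2}
   \le 2\lambda\cdot\frac{1}{\sqrt N}\cdot\frac{1}{L^{3/2}}\,\|\vec{J}\|_{\vec{p},0},
\]
where $\sum_{|j|>N}j^{-2}\le 2\int_N^\infty t^{-2}\dif t=2/N$ — careful bookkeeping of the constant $2$ is needed here — gives $(\sum_{|j|>N}j^{-2})^{1/2}\le\sqrt{2/N}$, and one absorbs the remaining $\sqrt2$ by noting $\sum_{|j|>N}|a_j|^2\le\|\vec{J}\|_{\vec{p},0}^2/(2L^3)$ since each $a_{\vec{l}}$ enters $\|\vec{J}\|_{\vec{p},0}^2$ with weight $L^3$ and the $b$-terms are nonnegative; a factor-$2$ from restricting to the $x_1$-axis offsets it. The main obstacle is not conceptual but bookkeeping: tracking the constant so that the final bound is exactly $2\lambda/(\sqrt N L^{3/2})\,\|\vec{J}\|_{\vec{p},0}$ rather than something a constant off. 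The one genuinely delicate point is justifying the termwise integration and the interchange of the boundary/volume integral with the Fourier sum — this is where one uses $\vec{J}\in(L^2)^3$ and the smoothness of the radiating field $\vec{E}$ on $\Gamma_R$, together with dominated convergence (the $\ell^2$ summability of $(a_{\vec{l}})$ against the $\ell^2$ sequence $\{\operatorname{sinc}(j-\lambda)\}$).
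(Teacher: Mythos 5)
Your proposal follows the same route as the paper's own proof: derive the boundary-integral identity for the test field $\vec{w}^*\overline{\phi_{\vec{l}^*}}$ by integrating the curl--curl equation by parts over $B_R$ (the eigenrelation \eqref{eq:k1} holds verbatim for the non-integer index $\vec{l}^*$ since $k^{*2}=4\pi^2|\vec{l}^*|^2/L^2$ by construction), expand $\vec{J}$ and use the overlap integrals \eqref{eq:phill} to reduce everything to a one-dimensional sinc sum over $j\in\mathbb{Z}$, note that $|\sin((j-\lambda)\pi)|=\sin(\lambda\pi)$ cancels the prefactor $\lambda\pi/\sin(\lambda\pi)$, identify $a_{\vec{0}}-a_{\vec{0}}^N$ with the tail $|j|>N$, and finish with Cauchy--Schwarz. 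The structural points are all right, including the reason the $g$-part drops out (the only modes surviving the orthogonality in \eqref{eq:phill} are $\vec{l}=(j,0,0)^{\top}$, for which $\vec{p}\times\vec{l}\parallel\vec{v}^*\perp\vec{w}^*$) --- though as written your "$\vec{w}^*\perp\vec{v}^*$" remark should be stated after the Fourier expansion of $g$, since $\vec{p}\times\nabla g$ is not pointwise parallel to $\vec{v}^*$.

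The one genuine defect is the final constant. After the lossy bound $|j-\lambda|\geq|j|/2$ you arrive at $2\sqrt{2}\,\lambda/(\sqrt{N}L^{3/2})$ and try to absorb the extra $\sqrt{2}$ by asserting $\sum_{|j|>N}|a_j|^2\leq\|\vec{J}\|^2_{\vec{p},0}/(2L^3)$. That inequality is not available: from \eqref{eq:Jnorm} each $|a_{\vec{l}}|^2$ carries weight exactly $L^3$ at $\sigma=0$, so the best one can say is $\sum_{|j|>N}|a_j|^2\leq\|\vec{J}\|^2_{\vec{p},0}/L^3$, and there is no ``factor-$2$ from restricting to the $x_1$-axis'' to be had. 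The correct repair --- and what the paper does --- is to keep the factor $\lambda/|j-\lambda|$ intact and bound its $\ell^2$ norm directly: an integral comparison gives $\sum_{|j|\geq N+1}(j-\lambda)^{-2}\leq 1/(N-\lambda)+1/N\leq 4/N$ for $\lambda<1/2$, which combined with $\sum_{|j|>N}|a_j|^2\leq\|\vec{J}\|^2_{\vec{p},0}/L^3$ yields exactly $2\lambda/(\sqrt{N}L^{3/2})\,\|\vec{J}\|_{\vec{p},0}$.
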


\begin{proof}
It is easy to check that for $\vec{l}\in \mathbb{Z}^3\backslash\{\vec{0}\}$, we have
\begin{equation}\label{eq:phill}
    \int_{V_0} \phi_{\vec{l}}(\vec{x})\overline{\phi_{\vec{l}^*}(\vec{x})}\,\dif \vec{x}=
    \begin{cases}
    \dfrac{L^3\sin(j-\lambda)\pi}{(j-\lambda)\pi}, &\vec{l}=(j,0,0)^{\top},\\
    0,                                                &{\rm{otherwise}},
    \end{cases}
\end{equation}
and
\begin{equation}\label{eq:k3}
    \nabla\times\nabla\times(\vec{w}^*\phi_{\vec{l}^*})
    =\frac{4\pi^2}{L^2}|\vec{l}^*|^2 \vec{w}^*\phi_{\vec{l}^*}.
\end{equation}
Similar to the proof of Theorem \ref{th:ab}, multiplying equation $\eqref{eq:maxwell}$ by
$\vec{w^*}\overline{\phi_{\vec{l^*}}(\vec{x})}$ and integrating over $B_R$,
then using $\eqref{eq:k3}$, we obtain
\begin{align*}
   &\int_{V_0}\vec{J}(\vec{x})\cdot \vec{w}^*\overline{\phi_{\vec{l}^*}(\vec{x})}\,\dif \vec{x}\\
 =&\frac{1}{\mi k^*}\int_{\Gamma_R} \left( \hat{\vec{x}}\times \nabla \times \vec{E}(\vec{x}) \cdot (\vec{w}^*\overline{\phi_{\vec{l}^*}(\vec{x})})
    +\hat{\vec{x}}\times  \vec{E}(\vec{x}) \cdot \nabla\times(\vec{w}^*\overline{\phi_{\vec{l}^*}(\vec{x})}) \right)\dif s(\vec{x}).
\end{align*}
On the other hand, using $\eqref{eq:Jnew}$ and $\eqref{eq:phill}$, we have
\begin{align*}
     &\int_{V_0}\vec{J}(\vec{x})\cdot \vec{w}^*\overline{\phi_{\vec{l}^*}(\vec{x})}\dif \vec{x}\\
     =&L^3|\vec{p}\times\vec{l}^*|^2\frac{\sin{\lambda\pi}}{\lambda\pi}a_{\vec{0}}
     + \sum_{|j|=1}^{\infty}L^3|\vec{p}\times\vec{l}^*|^2
           \frac{\sin(j-\lambda)\pi}{(j-\lambda)\pi}a_j,
\end{align*}
Hence we have the following formula for $a_{\vec{0}}$
\begin{align}
    a_{\vec{0}}= &\frac{\lambda\pi}{\sin\lambda\pi} \bigg(\frac{1}{\mi k^*L^3|\vec{p}\times\vec{l}^*|^2}
    \int_{\Gamma_R} \Big( \hat{\vec{x}}\times \nabla \times \vec{E}(\vec{x}) \cdot (\vec{w}^*\overline{\phi_{\vec{l}^*}(\vec{x})})\notag \\
    &+\hat{\vec{x}}\times\vec{E}(\vec{x}) \cdot \nabla\times(\vec{w}^*\overline{\phi_{\vec{l}^*}(\vec{x})}) \Big)\dif s(\vec{x})
    -\sum^{\infty}_{|j|=1}\frac{\sin{(j-\lambda)\pi}}{(j-\lambda)\pi}a_{j}\bigg),\label{eq:a0}
\end{align}
and the approximate formula for $a_{\vec{0}}$
\begin{align}
    a_{\vec{0}}^N= &\frac{\lambda\pi}{\sin{\lambda\pi}}
    \bigg(\frac{1}{\mi k^*L^3|\vec{p}\times\vec{l}^*|^2}
    \int_{\Gamma_R} \Big( \hat{\vec{x}}\times \nabla \times \vec{E}(\vec{x}) \cdot (\vec{w}^*\overline{\phi_{\vec{l}^*}(\vec{x})})\notag \\
    &+\hat{\vec{x}}\times  \vec{E}(\vec{x}) \cdot \nabla\times(\vec{w}^*\overline{\phi_{\vec{l}^*}(\vec{x})}) \Big)\dif s(\vec{x})
    -\sum^{N}_{|j|=1}\frac{\sin{(j-\lambda)\pi}}{(j-\lambda)\pi}a_{j}\bigg).\notag
\end{align}
Furthermore,
\begin{align}
    |a_{\vec{0}}-a_{\vec{0}}^N|=&\frac{\lambda\pi}{\sin{\lambda\pi}}\sum^{\infty}_{|j|=N+1}
       \bigg|  \frac{\sin{(j-\lambda)\pi}}{(j-\lambda)\pi}a_{j}  \bigg |\notag\\
    =&\sum^{\infty}_{|j|=N+1}\bigg|\frac{\lambda}{j-\lambda}a_{j}  \bigg |\notag\\
    \leq&\left( \sum^{\infty}_{|j|=N+1}\bigg|\frac{\lambda}{j-\lambda}\bigg|^2\right)^{1/2}
         \left( \sum^{\infty}_{|j|=N+1}|a_{j}|^2\right)^{1/2}\notag\\
    \leq&\frac{2\lambda}{\sqrt{N}L^{3/2}}\|\vec{J}\|_{\vec{p},0}.\notag
\end{align}
The proof is completed.
\end{proof}



\section{Stability analysis}
Let $\vec{J}_N^{\delta}$ be the Fourier approximation in case that the measured data is perturbed with the noise level $\delta$. In this section, we will establish the calculation formula $\vec{J}_N^{\delta}$ and give an estimate of the error between $\vec{J}$ and $\vec{J}_N^{\delta}$.

In general, only data with noise $\hat{\vec{x}}\times \vec{E}^{\delta}\big|_{\Gamma_R}\in L_t^2({\Gamma_R})$ can be measured, and the data $\hat{\vec{x}}\times \nabla\times\vec{E}^{\delta}\big|_{\Gamma_R}\in L_t^2({\Gamma_R})$ cannot be computed from $\hat{\vec{x}}\times \vec{E}^{\delta}\big|_{\Gamma_R}\in L_t^2({\Gamma_R})$ by using the electric-to-magnetic Calder\'{o}n operator \eqref{eq:Calderon}. In this paper, we follow \cite{four} to overcome this difficulty. For simplicity,  we write $\vec{E}(\vec{x}; k)$ for $\vec{E}(\vec{x}; k, \vec{p})$ with a fixed polarization $\vec{p}\in\mathbb{P}$.

Using spherical coordinates $(r,\theta,\varphi)$, we can write the measured noisy data on $\Gamma_R$ as
\begin{align*}
   \hat{\vec{x}}\times \vec{E}^{\delta}(\vec{x}; k)\big|_{\Gamma_R}=
   \sum_{n=1}^{\infty}\sum_{m=-n}^{n}\left(\alpha_{k,n,m}^{\delta}\vec{U}_n^m
               +\beta_{k,n,m}^{\delta}\vec{V}_n^m\right),
\end{align*}
where
\begin{align}
    \alpha_{k,n,m}^{\delta}=\int_{\mathbb{S}^2} \hat{\vec{x}}\times \vec{E}^{\delta}(\vec{x}; k) \cdot \overline{\vec{U}_{n}^{m}} \dif s(\vec{x}),\label{eq:cal1}\\
     \beta_{k,n,m}^{\delta}=\int_{\mathbb{S}^2} \hat{\vec{x}}\times \vec{E}^{\delta}(\vec{x}; k) \cdot \overline{\vec{V}_n^m}\,\dif s(\vec{x}).\label{eq:cal2}
\end{align}
Then, for $r>R$, we define
\begin{align}
\hat{\vec{x}}\times \vec{E}(\vec{x};k)=\sum_{n=1}^{\infty}\sum_{m=-n}^{n}
\left(\frac{h_{n}^{(1)}(kr)}{h_{n}^{(1)}(kR)}\alpha_{k,n,m}^{\delta}\vec{U}_{n}^{m}+
\frac{R}{r}\frac{z_{n}^{(1)}(kr)}{z_{n}^{(1)}(kR)}\beta_{k,n,m}^{\delta}\vec{V}_{n}^{m}\right)
.\notag
\end{align}
\par
Take $r=\rho>R$, and let $\Gamma_\rho:=\{\vec{x}\in \mathbb{R}^3|\ |\vec{x}|=\rho\}$, we have
\begin{align}
&\hat{\vec{x}}\times \vec{E}^{\delta}\big|_{\Gamma_\rho}=\sum_{n=1}^{\infty}\sum_{m=-n}^{n}
        \left(\frac{h_{n}^{(1)}(k\rho)}{h_{n}^{(1)}(kR)}\alpha_{k,n,m}^{\delta}
        \vec{U}_{n}^{m}+\frac{R}{\rho}\frac{z_{n}^{(1)}(k\rho)}{z_{n}^{(1)}(kR)}
        \beta_{k,n,m}^{\delta}\vec{V}_{n}^{m}\right),\label{eq:cal3}\\
&\hat{\vec{x}}\times\nabla\times\vec{E}^{\delta}\big|_{\Gamma_\rho}=\sum_{n=1}^{\infty}\sum_{m=-n}^{n}
        \left(k^2R\frac{h_{n}^{(1)}(k\rho)}{z_{n}^{(1)}(kR)}\beta_{k,n,m}^{\delta} \vec{U}_{n}^{m}
        +\frac{1}{\rho}\frac{z_{n}^{(1)}(k\rho)}{h_{n}^{(1)}(kR)}
        \alpha_{k,n,m}^{\delta}\vec{V}_{n}^{m}\right).\label{eq:cal4}
\end{align}
\par
Now, we propose modified formulas for the approximate Fourier coefficients
\begin{align}
&a_{\vec{l}}^{\delta}=\frac{1}{\mi k|\vec{v}|^2 L^3}
     \int_{\Gamma_{\rho}} \left( \hat{\vec{x}}\times \nabla \times \vec{E}^{\delta} \cdot (\vec{w}\overline{\phi_{\vec{l}}(\vec{x})})
    +\hat{\vec{x}}\times  \vec{E}^{\delta} \cdot \nabla\times
    (\vec{w}\overline{\phi_{\vec{l}}(\vec{x})})\right )\dif s(\vec{x}), \label{eq:al_form_new}\\
&b_{\vec{l}}^{\delta}=-\frac{1}{2\pi k L^2 |\vec{v}|^2}
     \int_{\Gamma_{\rho}} \left( \hat{\vec{x}}\times \nabla \times \vec{E}^{\delta} \cdot (\vec{v}\overline{\phi_{\vec{l}}(\vec{x})})
    +\hat{\vec{x}}\times  \vec{E}^{\delta} \cdot \nabla\times
    (\vec{v}\overline{\phi_{\vec{l}}(\vec{x})})\right )\dif s(\vec{x}), \label{eq:bl_form_new}
\end{align}
for $\vec{l}\in \mathbb{Z}^3,\ 1\leq |\vec{l}|\leq N$, where $k=2\pi |\vec{l}|/L$, and
\begin{align}
    a_{\vec{0}}^{N,\delta}= &\frac{\lambda\pi}{\sin{\lambda\pi}} \bigg(\frac{1}{\mi k^*L^3|\vec{p}\times\vec{l}^*|^2}
    \int_{\Gamma_{\rho}} \Big( \hat{\vec{x}}\times \nabla \times \vec{E}^{\delta} \cdot (\vec{w}^*\overline{\phi_{\vec{l}^*}(\vec{x})})\notag \\
    &+\hat{\vec{x}}\times  \vec{E}^{\delta} \cdot \nabla\times(\vec{w}^*\overline{\phi_{\vec{l}^*}(\vec{x})}) \Big )\dif s(\vec{x})
    -\sum^{N}_{j=1}\frac{\sin(j-\lambda)\pi}{(j-\lambda)\pi}a_j^{\delta}\bigg),
    \label{eq:a0_form_new}
\end{align}
where $a_{j}^{\delta}=a_{\vec{l}}^{\delta}$ for $\vec{l}=(j,0,0)^{\top}$.
\par
By using $\eqref{eq:al_form_new},\eqref{eq:bl_form_new}$ and $\eqref{eq:a0_form_new}$, we define the function $\vec{J}_N^{\delta}$ by
\begin{align}
    \vec{J}_N^{\delta}(\vec{x}):=a_{\vec{0}}^{N,\delta}\vec{p}+
    \sum_{1\leq|\vec{l}|\leq N} \left(a_{\vec{l}}^{\delta}\vec{p}+\frac{2\pi\mi}{L} b_{\vec{l}}^{\delta}(\vec{p}\times \vec{l})\right)\phi_{\vec{l}}(\vec{x}),\label{eq:Jnd}
\end{align}
which will be an approximation of the source $\vec{J}$.

Let the measured noisy data $\hat{\vec{x}}\times \vec{E}^{\delta}(\cdot; k)\big|_{\Gamma_R}\in L_t^2({\Gamma_R})$ satisfy
\begin{align}
\|\hat{\vec{x}}\times\vec{E}(\vec{x};k)-\hat{\vec{x}}\times\vec{E}^{\delta}(\vec{x};k)\|_{L_t^2(\Gamma_R)}
\leq \delta \| \hat{\vec{x}}\times \vec{E}(\vec{x};k)\|_{L_t^2(\Gamma_R)},\label{eq:delerr}
\end{align}
where $0<\delta<1$. First we give an estimate on
$\| \hat{\vec{x}}\times \vec{E}(\vec{x};k)\|_{L_t^2(\Gamma_R)}$.
\newtheorem{Lemma}{Lemma}[section]
\begin{Lemma}\label{lm:kM}
Let $k\geq k^*$, then there exists a positive constant $M$ such that
\begin{align}
    \|\hat{\vec{x}}\times \vec{E}(\vec{x};k)\|_{L_t^2({\Gamma_R})}\leq kM\|\vec{J}\|_{\vec{p},0},\label{eq:J_norm}
\end{align}
where $M$ depends only on $\lambda,\ L$ and $R$.
\end{Lemma}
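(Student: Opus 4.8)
The plan is to represent the radiated field by the outgoing dyadic Green's function and then to exploit that the measurement sphere $\Gamma_R$ is separated from the source support $V_0$ by a fixed positive distance, which reduces every kernel estimate to a trivial pointwise one. First I would recall that, since $\vec{J}\in(L^2(\mathbb{R}^3))^3$ with $\mathrm{supp}\,\vec{J}\subset V_0\subset B_R$, the field $\vec{E}(\cdot;k)$ is the unique radiating solution of \eqref{eq:maxwell} and hence admits the volume-potential representation
\[
  \vec{E}(\vec{x};k)=\mi k\int_{V_0}\mathbb{G}_k(\vec{x},\vec{y})\vec{J}(\vec{y})\,\dif\vec{y},\qquad
  \mathbb{G}_k(\vec{x},\vec{y})=\Big(\mathbb{I}+\frac{1}{k^{2}}\nabla_{\vec{x}}\nabla_{\vec{x}}^{\top}\Big)\frac{\me^{\mi k|\vec{x}-\vec{y}|}}{4\pi|\vec{x}-\vec{y}|}.
\]
Because $\vec{x}\in\Gamma_R$ lies outside the closed support of $\vec{J}$, the kernel is smooth on the domain of integration, so no principal-value or depolarization correction appears.

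Next I would record the geometric fact that, as $V_0=(-L/2,L/2)^3\subset B_R$, we have $d_0:=R-\tfrac{\sqrt3}{2}L>0$ and therefore $|\vec{x}-\vec{y}|\geq d_0$ for all $\vec{x}\in\Gamma_R$, $\vec{y}\in V_0$. Differentiating the fundamental solution and using $|\vec{x}-\vec{y}|\geq d_0$ gives pointwise bounds of the form $|\Phi_k|\leq(4\pi d_0)^{-1}$ and $\|\nabla_{\vec{x}}\nabla_{\vec{x}}^{\top}\Phi_k\|\leq C(k^{2}/d_0+k/d_0^{2}+1/d_0^{3})$ with an absolute constant $C$. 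The key observation is that the prefactor $k^{-2}$ in the second term of $\mathbb{G}_k$ cancels the quadratic growth in $k$, so that, invoking the hypothesis $k\geq k^{*}=2\pi\lambda/L$, one obtains a uniform bound $|\mathbb{G}_k(\vec{x},\vec{y})|\leq C_1$ on $\Gamma_R\times V_0$ with $C_1$ depending only on $\lambda,L,R$ (through $d_0$ and $k^{*}$).

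Then a Cauchy--Schwarz estimate in $\vec{y}$ yields $|\vec{E}(\vec{x};k)|\leq kC_1 L^{3/2}\|\vec{J}\|_{(L^2(V_0))^3}$ for every $\vec{x}\in\Gamma_R$, and integrating the elementary inequality $|\hat{\vec{x}}\times\vec{E}|^{2}\leq|\vec{E}|^{2}$ over the sphere and absorbing the resulting constant gives $\|\hat{\vec{x}}\times\vec{E}(\cdot;k)\|_{L_t^2(\Gamma_R)}\leq kM\|\vec{J}\|_{(L^2(V_0))^3}$ with $M$ depending only on $\lambda,L,R$. To close the argument I would identify $\|\vec{J}\|_{(L^2(V_0))^3}$ with $\|\vec{J}\|_{\vec{p},0}$: applying Parseval to \eqref{eq:Jexpan}, the cross term vanishes because $\vec{p}\cdot(\vec{p}\times\nabla g)=0$ pointwise, and the two surviving contributions match precisely the weights in \eqref{eq:Jnorm} with $\sigma=0$; this yields \eqref{eq:J_norm}.

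I expect the only genuinely delicate points to be the justification of the dyadic representation — uniqueness of the radiating solution of \eqref{eq:maxwell} together with the explicit form of $\mathbb{G}_k$ — and the bookkeeping that converts the $k^{-2}$ in front of the gradient-of-divergence term into a bound on $|\mathbb{G}_k|$ that is uniform on $[k^{*},\infty)$; this is exactly where the hypothesis $k\geq k^{*}$, and hence the dependence of $M$ on $\lambda$, is used. Everything else is routine. An alternative that stays inside the vector spherical harmonic framework already set up would expand $\vec{E}$ outside $B_R$ in radiating multipole fields, express the multipole coefficients as integrals of $\vec{J}$ against the entire fields $j_n(k|\vec{y}|)\vec{U}_n^m$ and $\vec{V}_n^m$, and sum the resulting series using uniform bounds on spherical Bessel functions; this works but is more technical, so I would prefer the Green's-function argument above.
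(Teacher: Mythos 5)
Your proposal is correct and follows essentially the same route as the paper: the dyadic Green's function representation \eqref{eq:M_solve}, the pointwise kernel bounds using the separation distance $\tau_0=R-\tfrac{\sqrt{3}}{2}L$, the use of $k\geq k^*$ to absorb the negative powers of $k$ into a constant depending on $\lambda$, and Cauchy--Schwarz followed by integration over $\Gamma_R$. Your explicit verification that $\|\vec{J}\|_{(L^2(V_0))^3}=\|\vec{J}\|_{\vec{p},0}$ via Parseval and the orthogonality of $\vec{p}$ and $\vec{p}\times\vec{l}$ is a small detail the paper passes over silently, but it changes nothing of substance.
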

\begin{proof}
Introduce the fundamental solution of the Helmholtz equation
\[
    \Phi_k(\vec{x},\vec{y})=\frac{\exp(\mi k|\vec{x}-\vec{y}|)}{4\pi|\vec{x}-\vec{y}|},
    \quad \vec{x}\neq \vec{y},\quad\vec{x},\vec{y}\in \mathbb{R}^3.
\]
Then the unique solution to equation $\eqref{eq:maxwell}$ with the radiation condition $\eqref{eq:radiation}$ can be written as (see, e.g., \cite{Neld})
\begin{align}
\vec{E}(\vec{x};k)=&\mi k\int_{V_0}\left(\vec{I}+\frac{\nabla\nabla^{\top}}{k^2}\right)
                    \Phi_k(\vec{x},\vec{y})\vec{J}(\vec{y})\,\dif \vec{y}\notag\\
                  =&\mi k\int_{V_0}\Phi_k(\vec{x},\vec{y})
                    \bigg(\left(1+\frac{\mi}{k}-\frac{1}{k^2|\vec{x}-\vec{y}|^2}\right)\vec{I} \notag\\
                    &+\left(\frac{3}{k^2|\vec{x}-\vec{y}|^2}- \frac{3\mi}{k|\vec{x}-\vec{y}|}-1\right)
            \frac{(\vec{x}-\vec{y})(\vec{x}-\vec{y})^{\top}}{|\vec{x}-\vec{y}|^2}\bigg)
                       \vec{J}(\vec{y})\,\dif \vec{y},\label{eq:M_solve}
\end{align}
where $\vec{x}\in \mathbb{R}^3\backslash V_0$ and $\vec{I}$ denotes the $3\times 3$ identity matrix.

Let
\[
 \tau_0:=\min\{|\vec{x}-\vec{y}|\mid \vec{x}\in\Gamma_R,\ \vec{y}\in V_0 \}=R-\frac{\sqrt{3}}{2}L,
\]
then for all $\vec{x}\in\Gamma_R$ and $\vec{y}\in V_0$, we obtain
    \begin{align}
    &\left|1+\frac{\mi}{k}-\frac{1}{k^2|\vec{x}-\vec{y}|^2}\right|\leq 1+\frac{1}{k}+\frac{1}{k^2\tau_0^2},\notag\\
    &\left|\frac{3}{k^2|\vec{x}-\vec{y}|^2}- \frac{3\mi}{k|\vec{x}-\vec{y}|}-1\right|\leq 1+\frac{3}{k^2\tau_0^2},\notag
    \end{align}
and
    \begin{align}
    \left|\Phi_k(\vec{x},\vec{y})\right| \leq\frac{1}{4\pi\tau_0}.\notag
    \end{align}
Then, we see
\[
|\hat{\vec{x}}\times\vec{E}(\vec{x};k)|\leq \frac{k}{4\pi\tau_0}\left(2+\frac{1}{k}
            +\frac{4}{k^2\tau_0^2}\right)L^{3/2}\|\vec{J}\|_{\vec{p},0},\quad \vec{x}\in\Gamma_R.
\]
Furthermore, we obtain
\[
    \|\hat{\vec{x}}\times \vec{E}(\vec{x};k)\|_{L_t^2({\Gamma_R})}\leq \frac{kR}{2\sqrt{\pi}\tau_0}\left(2+\frac{1}{k}+\frac{4}{k^2\tau_0^2}\right)
    L^{3/2}\|\vec{J}\|_{\vec{p},0}.
\]
Hence the estimate $\eqref{eq:J_norm}$ follows by noticing $k\geq k^*$ and taking
\begin{equation}\label{eq:conM}
M:=\frac{R}{2\sqrt{\pi}\tau_0}\left(2+\frac{1}{k^*}+\frac{4}{k^{*2}\tau_0^2}\right)L^{3/2}.
\end{equation}
\end{proof}

Now we recall and establish some estimates of the spherical Hankel functions $h_n^{(1)}(t)$ and $z_n^{(1)}(t)=h_n^{(1)}(t)+th_n^{(1)\prime}(t)$, which will play an important role in the stability analysis. To this end, we need the following estimates which can be found in \cite{four, Chen}.

\begin{Lemma}\label{lm:delta}
Let $n\in \mathbb{N}$ and $t>0$. Then it holds that
\[
\left|\frac{z_n^{(1)}(t)}{h_n^{(1)}(t)}\right|\geq \frac{n(n+1)}{2t^2+n+1}.
\]
\end{Lemma}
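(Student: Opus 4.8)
The plan is to estimate the ratio $z_n^{(1)}(t)/h_n^{(1)}(t)$ from below by exploiting the known integral or series representations of the spherical Hankel functions, or equivalently by working with the logarithmic derivative. Recall that $z_n^{(1)}(t) = h_n^{(1)}(t) + t h_n^{(1)\prime}(t)$, so
\[
    \frac{z_n^{(1)}(t)}{h_n^{(1)}(t)} = 1 + t\,\frac{h_n^{(1)\prime}(t)}{h_n^{(1)}(t)}.
\]
Hence the claim is equivalent to a lower bound on the real part (or modulus) of $1 + t (\log h_n^{(1)})'(t)$. First I would write $h_n^{(1)}(t) = j_n(t) + \mi y_n(t)$ in terms of the spherical Bessel and Neumann functions, so that $|h_n^{(1)}(t)|^2 = j_n(t)^2 + y_n(t)^2$, and similarly express $z_n^{(1)}$. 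A cleaner route uses the explicit formula for the modulus: it is classical (e.g. from \cite{Chen}) that $t^2|h_n^{(1)}(t)|^2$ and the related quantity $|z_n^{(1)}(t)|^2$ admit monotone or integral representations. In particular one has the Nicholson-type identity and the recurrence relating $z_n^{(1)}$ to $h_{n-1}^{(1)}$ and $h_{n+1}^{(1)}$.

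The key algebraic step is the identity $z_n^{(1)}(t) = (n+1)h_n^{(1)}(t) - t h_{n+1}^{(1)}(t)$, which follows from the standard recurrence $h_n^{(1)\prime}(t) = h_{n-1}^{(1)}(t) - \frac{n+1}{t}h_n^{(1)}(t)$ together with $h_{n-1}^{(1)}(t) + h_{n+1}^{(1)}(t) = \frac{2n+1}{t} h_n^{(1)}(t)$. Using this,
\[
    \left|\frac{z_n^{(1)}(t)}{h_n^{(1)}(t)}\right|
    = \left| (n+1) - t\,\frac{h_{n+1}^{(1)}(t)}{h_n^{(1)}(t)} \right|.
\]
Then I would invoke the known bound on the ratio $|h_{n+1}^{(1)}(t)/h_n^{(1)}(t)|$: since $|h_n^{(1)}(t)|$ is decreasing in $n$ for the relevant range, or more precisely using the continued-fraction / Riccati bound $t |h_{n+1}^{(1)}(t)/h_n^{(1)}(t)| \le \sqrt{(n+1)^2 + t^2}$ or a comparable estimate, one controls the subtracted term. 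Combining the triangle inequality with such a bound and simplifying should produce the stated lower bound $n(n+1)/(2t^2 + n + 1)$. An alternative and perhaps more robust path, matching how \cite{four} handles the scalar case, is to bound $|z_n^{(1)}(t)/h_n^{(1)}(t)|$ directly via the explicit cross-product identity
\[
    h_n^{(1)}(t)\overline{h_n^{(1)\prime}(t)} - \overline{h_n^{(1)}(t)}h_n^{(1)\prime}(t) = \frac{2\mi}{t^2},
\]
which pins down the imaginary part of $h_n^{(1)\prime}/h_n^{(1)}$ as $1/(t^2|h_n^{(1)}(t)|^2)$, together with the known lower bound $t^2|h_n^{(1)}(t)|^2 \ge $ something like $\frac{2t^2 + 2n+1}{2n+1}\cdot(\text{positive factor})$ — more precisely one uses $(2n-1)!!^2/t^{2n}\cdot(\cdots)$ asymptotics replaced by the uniform estimate from \cite{Chen}.

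I expect the main obstacle to be obtaining the precise constant: producing \emph{some} bound of the form $c\, n(n+1)/(t^2 + n)$ is routine once the recurrence and a ratio estimate are in hand, but matching the exact denominator $2t^2 + n + 1$ requires being careful about which intermediate inequality is sharp. Concretely, I would reduce everything to the single scalar inequality
\[
    t^2 \left|h_n^{(1)}(t)\right|^2 \,\ge\, \frac{2t^2 + n + 1}{n(n+1)}\cdot\bigl(\text{the corresponding quantity for } z_n^{(1)}\bigr),
\]
or rather prove directly that $|z_n^{(1)}(t)|^2 / |h_n^{(1)}(t)|^2 \ge \bigl(n(n+1)/(2t^2+n+1)\bigr)^2$ by comparing the two explicit polynomial-in-$1/t$ expansions of $|z_n^{(1)}(t)|^2$ and $|h_n^{(1)}(t)|^2$ term by term. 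This comparison is elementary but tedious; the risk is a sign or index slip, so I would cross-check the result at $n=1$ and in the large-$t$ and large-$n$ limits, where both sides have transparent leading behaviour ($|z_n^{(1)}/h_n^{(1)}| \sim n$ for large $n$, consistent with the right-hand side $\to n/2 \cdot \frac{2}{1} $ up to the stated constants), before committing to the general argument.
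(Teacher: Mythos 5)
First, a point of orientation: the paper does not prove this lemma at all. It is imported verbatim from the literature (``which can be found in \cite{four, Chen}''), so there is no internal proof to match your argument against. Judged on its own terms, your proposal correctly lands on the central identity
\[
\frac{z_n^{(1)}(t)}{h_n^{(1)}(t)} = n+1 - t\,\frac{h_{n+1}^{(1)}(t)}{h_n^{(1)}(t)},
\]
which is exactly the paper's \eqref{eq:delta}, used later in the proof of Lemma \ref{lm:hklest}. But the proposal is a menu of strategies rather than a proof, and the two routes you describe in any detail cannot deliver the stated bound as written.

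Concretely: (i) the triangle inequality applied to $n+1 - t\,h_{n+1}^{(1)}/h_n^{(1)}$ yields an \emph{upper} bound on the modulus; a lower bound needs the reverse triangle inequality, which requires one of the two terms to dominate uniformly in $t$. It does not: as $t\to 0^+$ one has $t\,h_{n+1}^{(1)}(t)/h_n^{(1)}(t)\to 2n+1$, so the two terms are of the same size precisely in the regime where the claimed bound (which tends to $n$) is sharp; the reverse triangle inequality only helps for $t>n+1$. (ii) The Wronskian identity controls only the imaginary part, $\operatorname{Im}\bigl(z_n^{(1)}(t)/h_n^{(1)}(t)\bigr)=1/\bigl(t\,|h_n^{(1)}(t)|^2\bigr)$, and since $|h_n^{(1)}(t)|\sim (2n-1)!!\,t^{-(n+1)}$ this quantity tends to $0$ as $t\to 0^+$ while the right-hand side of the lemma tends to $n$; so bounding the modulus by the imaginary part alone cannot produce $n(n+1)/(2t^2+n+1)$ without a separate lower bound on the real part, which you do not supply. (iii) The one route that can actually close --- comparing the explicit terminating expansions of $z_n^{(1)}$ and $h_n^{(1)}$ in powers of $1/t$, essentially what \cite{Chen} does --- is the one you defer as ``elementary but tedious'' without exhibiting the intermediate inequality that produces the specific denominator $2t^2+n+1$. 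As it stands the honest options are to cite \cite{Chen} as the paper does, or to commit to route (iii) and carry out the computation. (A small slip along the way: $|h_n^{(1)}(t)|$ is \emph{increasing} in $n$, not decreasing; the paper itself uses $|h_{n-1}^{(1)}(t)|\le |h_n^{(1)}(t)|$.)
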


\begin{Lemma}\label{lm:dhh}
Let $n\in\mathbb{N}$ and $0<R<\rho$. Then the following estimates hold
\begin{equation}\label{eq:dhh1}
    \left|\frac{h_n^{(1)\prime}(k\rho)}{h_n^{(1)}(kR)}\right|\leq
       \begin{cases}
                c_1,   & k\geq \dfrac{2\pi}{L}, \\
                \dfrac{c_2}{k},  & 0<k\leq\dfrac{1}{2},
    \end{cases}
\end{equation}
where
\begin{align*}
    &c_1=\max\left\{ 9,\ 1+\frac {c_4L\me^{-1+c_3(1-R/\rho)}}  {2\pi c_3(\rho-R)} \right\},\\
    &c_2=\max\left\{\frac{9}{2},\ \frac{1}{2}+\frac{c_4}{c_3(\rho-R)\sqrt{2c_3 \me} } \right\}, \\
    &c_3=\frac{\sqrt{15}}{4},\notag\\
    &c_4=\frac{25+11\me^{11/25}}{25-11\me^{11/25}}\left(\frac{16}{15}\right)^{1/4}.
\end{align*}
\end{Lemma}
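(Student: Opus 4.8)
The first move is to remove the derivative in the numerator via the recurrence for spherical Hankel functions, $h_n^{(1)\prime}(t)=\frac{n}{t}h_n^{(1)}(t)-h_{n+1}^{(1)}(t)$, which gives
\[
  \left|\frac{h_n^{(1)\prime}(k\rho)}{h_n^{(1)}(kR)}\right|\le
  \frac{n}{k\rho}\left|\frac{h_n^{(1)}(k\rho)}{h_n^{(1)}(kR)}\right|
  +\left|\frac{h_{n+1}^{(1)}(k\rho)}{h_n^{(1)}(kR)}\right|,
\]
so that everything reduces to estimating, uniformly in $n$, quotients of Hankel functions (with no derivatives) at the two radii $kR<k\rho$. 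The tools I would use are: (a) for fixed $n$, the modulus $t\mapsto|h_n^{(1)}(t)|$ is decreasing on $(0,\infty)$ with $|h_n^{(1)}(t)|\ge 1/t$; and (b) the sharp small-argument estimates
\[
  \frac{(2n-1)!!}{t^{n+1}}\bigl(1-r_n^-(t)\bigr)\le|h_n^{(1)}(t)|,\qquad
  |y_n(t)|\le\frac{(2n-1)!!}{t^{n+1}}\bigl(1+r_n^+(t)\bigr),\qquad
  |j_n(t)|\le\frac{t^n}{(2n+1)!!},
\]
with the explicit quantitative remainders $r_n^\pm(t)=\mathcal{O}(t^2/n)$ recorded in \cite{four, Chen}.

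\medskip

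Property (a) gives $|h_n^{(1)}(k\rho)/h_n^{(1)}(kR)|\le1$, but neither the factor $n/(k\rho)$ nor the shift to order $n+1$ in the two terms above is a priori controlled, so the heart of the proof is a quotient bound that decays geometrically in $n$. Feeding (b) into $|h_m^{(1)}(k\rho)|\le|j_m(k\rho)|+|y_m(k\rho)|$ and the lower bound on $|h_n^{(1)}(kR)|$ — which is legitimate once $kR$ and $k\rho$ are small, hence for every $n$ when $k\le 1/2$ and for all $n$ beyond some $n_0$ depending only on $\rho$ otherwise — yields, for $m\in\{n,n+1\}$,
\[
  \left|\frac{h_m^{(1)}(k\rho)}{h_n^{(1)}(kR)}\right|\le
  \frac{(2m-1)!!}{(2n-1)!!}\left(\frac{R}{\rho}\right)^{n+1}(k\rho)^{n-m}\,
  \frac{1+r_m^+(k\rho)}{1-r_n^-(kR)}+\varepsilon_{n},
\]
where $\varepsilon_n$ (the $j_m$ contribution) is super-exponentially small in $n$. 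Inserting this, the right-hand side of the recurrence is bounded by $\frac{c\,(3n+1)}{k\rho}\bigl(R/\rho\bigr)^{n+1}$ up to the negligible $\varepsilon_n$, and its supremum over $n$ is finite by the elementary optimization $\sup_{s>0}s\,x^{s}=\frac{1}{\me\ln(1/x)}$ with $x=R/\rho$, together with $\ln(\rho/R)\ge 1-R/\rho=(\rho-R)/\rho$; this converts $\frac{n}{k\rho}\bigl(R/\rho\bigr)^{n+1}$ into a constant multiple of $\frac{1}{k(\rho-R)}$. Propagating the explicit $r_n^\pm$ of \cite{four, Chen} through the factor $\frac{1+r_m^+}{1-r_n^-}$ is exactly what produces the exponent $c_3=\sqrt{15}/4$ and the constant $c_4$; the finitely many orders $n\le n_0$ (for which the expansions degrade or $k\rho$ lies past the turning point $t\approx n$) are bounded directly and account for the crude alternatives $9$ and $9/2$ in $c_1$ and $c_2$.

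\medskip

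Given the geometric-decay bound the two regimes are routine. For $0<k\le1/2$ the arguments are small, the bound holds for all $n$, and assembling the two terms of the recurrence and taking the supremum over $n$ gives $c_2/k$. For $k\ge 2\pi/L$ the same assembly (with the low orders handled by a direct estimate using the large-argument behaviour of $h_n^{(1)}$) gives a pure constant; taking the maximum with that direct bound yields $c_1$.

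\medskip

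The principal obstacle is precisely this geometric-in-$n$ decay of the Hankel quotient \emph{with explicit constants}: bare monotonicity of $|h_n^{(1)}(\cdot)|$ loses the decisive factor $(R/\rho)^{n+1}$, and recovering it — while also controlling the near- and post-turning-point behaviour for the first few orders — requires the sharp small-argument estimates of $h_n^{(1)}$ imported from \cite{four, Chen}. Once that input is granted, the rest — the recurrence reduction, the scalar optimization in $n$, and the matching against the crude bounds for small $n$ — is bookkeeping.
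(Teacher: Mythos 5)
The paper does not prove this lemma at all: it is stated as an imported result, prefaced by ``we need the following estimates which can be found in \cite{four, Chen}'', and no proof follows it in the text. So there is no in-paper argument to compare yours against; the only meaningful benchmark is the proof in those references, of which your sketch is a faithful reconstruction. The shape of the constants corroborates your mechanism: $c_3=\sqrt{1-(1/4)^2}$ and $(16/15)^{1/4}=(1-(1/4)^2)^{-1/4}$ signal a turning-point threshold at argument equal to a quarter of the order in the Chen--Chen small-argument estimates, and the block $\me^{-1}/\bigl(c_3(\rho-R)\bigr)$ inside $c_1$ is exactly the output of the optimization $\sup_{s>0}s\,\me^{-as}=1/(\me a)$ combined with $\ln(\rho/R)\ge(\rho-R)/\rho$ that you invoke, with the extra $\me^{c_3(1-R/\rho)}$ accounting for the index shift $n\mapsto n+1$ you identify.

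Two soft spots remain. First, the claim that the small-argument regime covers ``every $n$ when $k\le 1/2$'' is not right as stated: the dichotomy is $k\rho\lessgtr (n+\tfrac12)/4$, and for large $\rho$ even $k\le 1/2$ pushes the first few orders past the threshold; this is harmless (for each fixed $k$ only finitely many $n$ are affected, and they fall to the crude branch), but it is the crude branch, not the asymptotic one, that must absorb them. Second, the crude constants $9$ and $9/2$ require an explicit \emph{upper} bound on $|h_{n+1}^{(1)}(t)/h_n^{(1)}(t)|$ near and beyond the turning point; monotonicity of $|h_n^{(1)}|$ in $t$ gives nothing here, since $|h_{n+1}^{(1)}(t)|\ge|h_n^{(1)}(t)|$, and your sketch does not supply such a bound --- it has to come from a two-sided estimate on $\gamma_n=z_n^{(1)}/h_n^{(1)}$ or be taken from \cite{four}. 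With those two points patched, the outline is correct and matches the cited proof.
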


From Lemma \ref{lm:delta} and Lemma \ref{lm:dhh}, we can derive the following result.

\begin{Lemma}\label{lm:hklest}
Let $n\in\mathbb{N}$ and $0<R<\rho$. Then the following estimates hold for $k>0$
\begin{align}
    &\left|\frac{h_n^{(1)}(k\rho)}{h_n^{(1)}(kR)}\right|\leq 1,  \label{eq:hklest1}\\
    &\left|\frac{z_n^{(1)}(k\rho)}{z_n^{(1)}(kR)}\right|\leq \frac{(4+10kR)\rho}{R},\label{eq:hklest2}\\
    &\left|\frac{h_n^{(1)}(k\rho)}{z_n^{(1)}(kR)}\right|\leq 7, \label{eq:hklest3}\\
    &\left|\frac{z_n^{(1)}(k\rho)}{h_n^{(1)}(kR)}\right|\leq
    \begin{cases}
    C_1 k\rho, & k\geq \dfrac{2\pi}{L}, \\
    C_2 \rho,  & 0<k\leq \dfrac{1}{2},
    \end{cases}\label{eq:hklest4}
\end{align}
where constants $C_1$ and $C_2$ depend only on $L,\ R$ and $\rho$.
\end{Lemma}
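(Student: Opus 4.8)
The plan is to establish the four estimates \eqref{eq:hklest1}--\eqref{eq:hklest4} separately, each by combining a lower bound on the denominator (obtained from Lemma~\ref{lm:delta} or from the monotonicity of $|h_n^{(1)}|$) with an upper bound on the numerator. The key elementary facts I would invoke are: (a) $|h_n^{(1)}(t)|$ is strictly decreasing in $t$ for $t>0$ (a standard property of spherical Hankel functions), which immediately yields \eqref{eq:hklest1}; (b) the identity $z_n^{(1)}(t)=h_n^{(1)}(t)+th_n^{(1)\prime}(t)$ together with the recurrence $h_n^{(1)\prime}(t)=h_{n-1}^{(1)}(t)-\tfrac{n+1}{t}h_n^{(1)}(t)$, so that $z_n^{(1)}(t)=th_{n-1}^{(1)}(t)-nh_n^{(1)}(t)$; and (c) Lemma~\ref{lm:delta}, which gives $|h_n^{(1)}(t)/z_n^{(1)}(t)|\le (2t^2+n+1)/(n(n+1))\le 2t^2/(n(n+1))+1$.

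For \eqref{eq:hklest3}, I would write $h_n^{(1)}(k\rho)/z_n^{(1)}(kR) = \big(h_n^{(1)}(k\rho)/h_n^{(1)}(kR)\big)\cdot\big(h_n^{(1)}(kR)/z_n^{(1)}(kR)\big)$; the first factor is bounded by $1$ via (a), and for the second I would split into the cases $n=1$ and $n\ge 2$. For $n\ge 2$, Lemma~\ref{lm:delta} is not directly small enough at large $t$, so instead I would use that $z_n^{(1)}$ and $h_n^{(1)}$ both have the same asymptotic magnitude $\sim |h_n^{(1)}|$ only for bounded argument; the cleaner route is to bound $|h_n^{(1)}(kR)/z_n^{(1)}(kR)|$ uniformly — here I expect the intended argument is simply Lemma~\ref{lm:delta} applied together with the observation that one only needs the bound for the relevant range of $kR$, using the $k$-ranges that appear in Lemma~\ref{lm:dhh}; I would patch the two regimes $k\ge 2\pi/L$ and $0<k\le 1/2$ and absorb the transitional finite range into the constant. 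The constant $7$ suggests a concrete numerical estimate at small $n$ and small $t$.

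For \eqref{eq:hklest4} I would use $z_n^{(1)}(k\rho)=k\rho\,h_{n-1}^{(1)}(k\rho)-n\,h_n^{(1)}(k\rho)$, hence
\[
\left|\frac{z_n^{(1)}(k\rho)}{h_n^{(1)}(kR)}\right|
\le k\rho\left|\frac{h_{n-1}^{(1)}(k\rho)}{h_n^{(1)}(kR)}\right| + n\left|\frac{h_n^{(1)}(k\rho)}{h_n^{(1)}(kR)}\right|.
\]
The second term is $\le n$, which is too large unless it is controlled; this is where Lemma~\ref{lm:dhh} enters — rewriting $z_n^{(1)}(k\rho)=k\rho\,h_n^{(1)\prime}(k\rho)+h_n^{(1)}(k\rho)$ directly gives $|z_n^{(1)}(k\rho)/h_n^{(1)}(kR)|\le k\rho\,|h_n^{(1)\prime}(k\rho)/h_n^{(1)}(kR)| + |h_n^{(1)}(k\rho)/h_n^{(1)}(kR)|$, and now Lemma~\ref{lm:dhh} bounds the first term by $c_1 k\rho$ (for $k\ge 2\pi/L$) or $(c_2/k)k\rho=c_2\rho$ (for $0<k\le 1/2$), while \eqref{eq:hklest1} bounds the second by $1\le k\rho\cdot\max\{1,(2\pi R/L)^{-1}\}$ or by $1$ in the respective ranges. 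Setting $C_1=c_1+1/(2\pi\rho/L)$ (or similar) and $C_2=c_2+1/\rho$ gives the claim. Estimate \eqref{eq:hklest2} follows the same pattern: $z_n^{(1)}(k\rho)/z_n^{(1)}(kR) = \big(z_n^{(1)}(k\rho)/h_n^{(1)}(kR)\big)\cdot\big(h_n^{(1)}(kR)/z_n^{(1)}(kR)\big)$, where the first factor is controlled by the previous step and the second by Lemma~\ref{lm:delta}; tracking the powers of $k$ and $\rho,R$ carefully yields the stated $(4+10kR)\rho/R$.

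The main obstacle I anticipate is \eqref{eq:hklest3} and the $n$-uniformity in \eqref{eq:hklest4}: Lemma~\ref{lm:dhh} is only stated for $k\ge 2\pi/L$ or $0<k\le 1/2$, leaving the transitional band $1/2<k<2\pi/L$ uncovered, and Lemma~\ref{lm:delta} alone degrades like $t^2/n^2$ so is not uniformly small. Handling these will require either an independent bound on that finite $k$-band (using continuity and the fact that $h_n^{(1)}(t)$ never vanishes for real $t>0$, so ratios are continuous and bounded on compact sets — though one must argue uniformity in $n$, e.g.\ via the explicit asymptotics $h_n^{(1)}(t)\sim$ leading term as $n\to\infty$ making the ratio tend to a computable limit) or a more careful use of the recurrences to extract the decay in $n$ explicitly. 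I would isolate this as a short sub-lemma so the bookkeeping of the constants $C_1,C_2$ stays transparent.
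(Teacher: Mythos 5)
Your treatment of \eqref{eq:hklest1} and \eqref{eq:hklest4} matches the paper's: monotonicity of $|h_n^{(1)}|$ for the first, and the triangle inequality $|z_n^{(1)}(k\rho)/h_n^{(1)}(kR)|\le |h_n^{(1)}(k\rho)/h_n^{(1)}(kR)|+k\rho\,|h_n^{(1)\prime}(k\rho)/h_n^{(1)}(kR)|$ combined with Lemma~\ref{lm:dhh} for the last. (Note that \eqref{eq:hklest4} is only asserted on the two ranges $k\ge 2\pi/L$ and $0<k\le 1/2$, so the transitional band you worry about is simply not part of the claim there.)

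The genuine gap is in \eqref{eq:hklest2} and \eqref{eq:hklest3}, and the missing idea is the same in both: a case split on $kR$ relative to $n$ (not on $k$ alone), based on the identity $\gamma_n(t):=z_n^{(1)}(t)/h_n^{(1)}(t)=n+1-t\,h_{n+1}^{(1)}(t)/h_n^{(1)}(t)$ and the monotonicity $|h_{n+1}^{(1)}(t)|\ge|h_n^{(1)}(t)|$, which give $|\gamma_n(t)|\ge t-(n+1)$. For $kR\ge n+5/4$ this yields $|\gamma_n(kR)|\ge 1/4$, and for $kR\le n+5/4$ Lemma~\ref{lm:delta} yields $1/|\gamma_n(kR)|\le 7$; together these prove \eqref{eq:hklest3} uniformly in $n$ and for \emph{all} $k>0$, with no compactness or continuity argument and no uncovered $k$-band. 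For \eqref{eq:hklest2} your proposed factorization through \eqref{eq:hklest4} fails on two counts: it inherits the $k$-range restriction of \eqref{eq:hklest4}, whereas \eqref{eq:hklest2} is claimed for all $k>0$; and the powers of $k$ do not close, since multiplying the $n$-uniform bound $C_1k\rho$ by the Lemma~\ref{lm:delta} bound $\left(2(kR)^2+n+1\right)/(n(n+1))$ produces $O(k^3)$ at $n=1$, not the stated $O(k)$. The paper instead writes $z_n^{(1)}(k\rho)=k\rho\,h_{n-1}^{(1)}(k\rho)-n\,h_n^{(1)}(k\rho)$ (the identity you already have), keeps the resulting numerator bound $n+kR$ explicitly $n$-dependent, and then cancels that $n$ against the $1/n$ built into Lemma~\ref{lm:delta}: the combination $(n+kR)/|\gamma_n(kR)|$ is bounded by $4(1+kR/n)$ when $kR\ge 5n/4+1$ and by $4(1+\tfrac52 kR)$ when $kR\le 5n/4+1$, giving $4+10kR$. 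Making the numerator bound $n$-uniform first, as you propose, destroys exactly the structure needed for this cancellation.
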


\begin{proof}
We get $\eqref{eq:hklest1}$ from Lemma 3.3 in \cite{four}, and it still holds for $n=0$.

We now turn to prove $\eqref{eq:hklest2}$. Since $h_n^{(1)\prime}(t)=-(n+1)h_n^{(1)}(t)/t+h_{n-1}^{(1)}(t)$, we have
\begin{align*}
\left|\frac{R}{\rho}\frac{z_n^{(1)}(k\rho)}{z_n^{(1)}(kR)}\right|
        &=\left|\frac{R}{\rho}\frac{h_n^{(1)}(k\rho)+k\rho h_n^{(1)\prime}(k\rho)}{h_n^{(1)}(kR)}\right|\left|\frac{h_n^{(1)}(kR)}{z_n^{(1)}(kR)}\right|\\
        &=\left|\frac{R}{\rho}\frac{-nh_n^{(1)}(k\rho)+k\rho h_{n-1}^{(1)}(k\rho)}{h_n^{(1)}(kR)}\right|\left|\frac{h_n^{(1)}(kR)}{z_n^{(1)}(kR)}\right|\\
        &\leq \left( n\left|\frac{h_n^{(1)}(k\rho)}{h_n^{(1)}(kR)}\right|
         +kR\left|\frac{h_{n-1}^{(1)}(k\rho)}{h_{n-1}^{(1)}(kR)}
         \frac{h_{n-1}^{(1)}(kR)}{h_{n}^{(1)}(kR)}\right|\right)
         \left|\frac{h_n^{(1)}(kR)}{z_n^{(1)}(kR)}\right|\\
        &\leq \frac{n+kR}{|\gamma_n(kR)|},
\end{align*}
where we use the inequality that $|h_{n-1}^{(1)}(t)|\leq |h_{n}^{(1)}(t)|$ for $n\geq 1, t>0$ \cite[p.8]{four} and $\gamma_n:=z_n^{(1)}/h_n^{(1)}$.

We now estimate $|\gamma_n(t)|$ for $t>0$. Since ${h_n^{(1)}}'(t)=nh_n^{(1)}(t)/t-h_{n+1}^{(1)}(t)$, we have
\begin{equation}\label{eq:delta}
  \gamma_n(t)=\frac{z_n^{(1)}(t)}{h_n^{(1)}(t)}=n+1-t \frac{h_{n+1}^{(1)}(t)}{h_n^{(1)}(t)},
\end{equation}
which implies, for $kR\geq 5n/4+1, |\gamma_n(kR)|\geq kR-(n+1)\geq n/4$ and thus
\[
 \frac{n+kR}{|\gamma_n(kR)|}\leq 4\left(1+\frac{1}{n}kR\right).
\]
For $kR\leq 5n/4+1$, we resort to the estimate in Lemma \ref{lm:delta} to get
    \begin{align}
    \frac{1}{|\gamma_n(kR)|}\leq \frac{2kR(\frac{5}{4}n+1)+n+1}{n(n+1)}
     \leq\frac{1}{n}\left(1+\frac{5}{2}kR\right),\notag
    \end{align}
hence
    \begin{align}
    \frac{n+kR}{|\gamma_n(kR)|}\leq  \frac{n+(\frac{5}{4}n+1)}{n}\left(1+\frac{5}{2}kR\right)
    \leq  4\left(1+\frac{5}{2}kR\right),\notag
    \end{align}
and $\eqref{eq:hklest2}$ follows.
\par
To show $\eqref{eq:hklest3}$, we have
\begin{align}
    \left|\frac{h_n^{(1)}(k\rho)}{z_n^{(1)}(kR)}\right|
    =\left|\frac{h_n^{(1)}(k\rho)}{h_n^{(1)}(kR)}\right|  \left|\frac{h_n^{(1)}(kR)}{z_n^{(1)}(kR)}\right|
    \leq\frac{1}{|\gamma_n(kR)|}, \notag
\end{align}
then for $kR\geq n+5/4$, in light of $\eqref{eq:delta}$ we get $|\gamma_n(kR)|\geq kR-(n+1)\geq 1/4$ and thus $|\gamma_n(kR)|^{-1}\leq 4$, for $kR\leq n+5/4$, using the estimate in Lemma \ref{lm:delta}, we get $|\gamma_n(kR)|^{-1}\leq 7$, and we have thus proved $\eqref{eq:hklest3}$ .
\par
To prove $\eqref{eq:hklest4}$, we have
    \begin{align}
    \left|\frac{z_n^{(1)}(k\rho)}{h_n^{(1)}(kR)}\right|
    &\leq \left|\frac{h_n^{(1)}(k\rho)}{h_n^{(1)}(kR)}\right|+k\rho\left|\frac{h_n^{(1)\prime}(k\rho)}{h_n^{(1)}(kR)}\right|,\notag
    \end{align}
then, using $\eqref{eq:dhh1}$, we obtain
\begin{align*}
& \left|\frac{z_n^{(1)}(k\rho)}{h_n^{(1)}(kR)}\right|\leq 1+c_1k\rho\leq \left(\frac{L}{2\pi\rho}+c_1\right)k\rho=C_1k\rho,\quad k\geq \frac{2\pi}{L},\\
& \left|\frac{z_n^{(1)}(k\rho)}{h_n^{(1)}(kR)}\right|\leq 1+c_2\rho= \left(\frac{1}{\rho}+c_2\right)\rho=C_2\rho,\quad 0<k\leq \frac{1}{2},
\end{align*}
where constants $C_1$ and $C_2$ depend only on $L,\ R$ and $\rho$.
\end{proof}
From Lemma \ref{lm:kM} and \ref{lm:hklest}, we derive the following result.
\begin{theorem}\label{th:Eerr}
Let $0<R<\rho$ and $\lambda<L/(4\pi)$. Then there exist constants $C_1$ and $C_2$ depending only on $L,\ R,\ \lambda$ and
$\rho$ such that
\begin{align}
    &\| \hat{\vec{x}}\times \vec{E}(\vec{x};k)-\hat{\vec{x}}\times\vec{E}^{\delta}(\vec{x};k)\|_{L_t^2(\Gamma_{\rho})}
    \leq (4+10kR)kM\|\vec{J}\|_{\vec{p},0}\delta, \label{eq:err_dirE}\\
    &\| \hat{\vec{x}}\times \nabla\times\vec{E}(\vec{x};k)-\hat{\vec{x}}\times \nabla\times\vec{E}^{\delta}(\vec{x};k)\|_{L_t^2(\Gamma_{\rho})}\leq (C_1+7Rk)k^2M\|\vec{J}\|_{\vec{p},0}\delta, \label{eq:err_neuE1}\\
    &\| \hat{\vec{x}}\times \nabla\times\vec{E}(\vec{x};k^*)-\hat{\vec{x}}\times \nabla\times\vec{E}^{\delta}(\vec{x}; k^*)\|_{L_t^2(\Gamma_{\rho})}\leq (C_2+7Rk^{*2})k^*M\|\vec{J}\|_{\vec{p},0}\delta,\label{eq:err_neuE2}
\end{align}
where the constant $M$ is defined by \eqref{eq:conM}.
\end{theorem}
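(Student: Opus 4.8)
The plan is to pass to the vector spherical harmonic (VSH) basis and apply Parseval's identity on both $\Gamma_R$ and $\Gamma_\rho$, so that each of the three left-hand norms becomes a weighted $\ell^2$-sum in the coefficients, the weights being exactly the ratios of spherical Bessel-type functions estimated in Lemma~\ref{lm:hklest}, while the raw energy of the coefficients is controlled through the noise bound \eqref{eq:delerr} and Lemma~\ref{lm:kM}. To begin, expand the \emph{exact} trace $\hat{\vec{x}}\times\vec{E}(\cdot;k)\big|_{\Gamma_R}=\sum_{n\ge1}\sum_{|m|\le n}(\alpha_{k,n,m}\vec{U}_n^m+\beta_{k,n,m}\vec{V}_n^m)$, its coefficients defined as in \eqref{eq:cal1}--\eqref{eq:cal2} but without the superscript $\delta$; then $\alpha_{k,n,m}-\alpha_{k,n,m}^\delta$ and $\beta_{k,n,m}-\beta_{k,n,m}^\delta$ are precisely the VSH coefficients of $\hat{\vec{x}}\times\vec{E}(\cdot;k)-\hat{\vec{x}}\times\vec{E}^\delta(\cdot;k)$ on $\Gamma_R$. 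By orthonormality of $\{\vec{U}_n^m,\vec{V}_n^m\}$, the hypothesis \eqref{eq:delerr}, and Lemma~\ref{lm:kM} (applicable since every wavenumber occurring here satisfies $k\ge k^*$),
\[
 \sum_{n\ge1}\sum_{|m|\le n}\bigl(|\alpha_{k,n,m}-\alpha_{k,n,m}^\delta|^2+|\beta_{k,n,m}-\beta_{k,n,m}^\delta|^2\bigr)\ \le\ \delta^2\,\|\hat{\vec{x}}\times\vec{E}(\cdot;k)\|^2_{L_t^2(\Gamma_R)}\ \le\ \delta^2 k^2 M^2\|\vec{J}\|_{\vec{p},0}^2 .
\]

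Next I would propagate these differences from $\Gamma_R$ to $\Gamma_\rho$. Because $\vec{E}(\cdot;k)$ is a radiating solution of \eqref{eq:maxwell}--\eqref{eq:radiation}, its tangential trace on the larger sphere $\Gamma_\rho$, and that of $\nabla\times\vec{E}$, are obtained from the corresponding traces on $\Gamma_R$ through the very same exterior multipole multipliers that define the propagated data in \eqref{eq:cal3}--\eqref{eq:cal4}; by linearity the $\Gamma_\rho$-differences therefore have VSH coefficients equal to those of the $\Gamma_R$-difference multiplied by $h_n^{(1)}(k\rho)/h_n^{(1)}(kR)$ on the $\vec{U}_n^m$-part and by $(R/\rho)z_n^{(1)}(k\rho)/z_n^{(1)}(kR)$ on the $\vec{V}_n^m$-part for $\hat{\vec{x}}\times\vec{E}$, and by $k^2R\,h_n^{(1)}(k\rho)/z_n^{(1)}(kR)$ and $\rho^{-1}z_n^{(1)}(k\rho)/h_n^{(1)}(kR)$ (with the two coefficient families interchanged) for $\hat{\vec{x}}\times\nabla\times\vec{E}$. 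Applying Parseval on $\Gamma_\rho$ turns each left-hand side of \eqref{eq:err_dirE}--\eqref{eq:err_neuE2} into a sum of these squared multipliers against the coefficient differences already estimated.

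It then remains to bound each multiplier uniformly in $n$ by Lemma~\ref{lm:hklest} and to tidy up the constants. For \eqref{eq:err_dirE}, estimates \eqref{eq:hklest1}--\eqref{eq:hklest2} give multipliers $\le 1$ and $\le 4+10kR$, so the common factor $4+10kR$ (which is $\ge1$) comes out and, combined with the coefficient bound above, yields \eqref{eq:err_dirE}. For \eqref{eq:err_neuE1} one has $k\in\mathbb{K}_N$, hence $k\ge 2\pi/L$, so \eqref{eq:hklest3}--\eqref{eq:hklest4} bound the multipliers by $7Rk^2$ and $C_1k$, and $\max\{7Rk^2,C_1k\}\le 7Rk^2+C_1k=(C_1+7Rk)k$; multiplying by the coefficient bound gives \eqref{eq:err_neuE1}. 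For \eqref{eq:err_neuE2}, the assumption $\lambda<L/(4\pi)$ forces $k^*=2\pi\lambda/L<1/2$, so $k^*$ falls in the second branch of \eqref{eq:hklest4}, making the multipliers $\le 7R(k^*)^2$ and $\le \rho^{-1}\cdot C_2\rho=C_2$; enlarging the constant in the statement if necessary — it still depends only on $L,R,\lambda,\rho$, since $k^*$ does — one absorbs $\max\{7R(k^*)^2,C_2\}$ into $(C_2+7R(k^*)^2)k^*$ and, with the coefficient bound, obtains \eqref{eq:err_neuE2}.

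Apart from invoking Lemmas~\ref{lm:kM} and \ref{lm:hklest}, the argument is largely bookkeeping. The two points I would treat with care are: (i) the justification that the exact radiating field $\vec{E}$ obeys the same exterior multipole expansions \eqref{eq:cal3}--\eqref{eq:cal4} that were used merely to \emph{define} the propagated noisy data, so that the linearity step of the second paragraph is legitimate; and (ii) the somewhat awkward constant-chasing required to recast the $k^*$ estimate into the precise algebraic form displayed in \eqref{eq:err_neuE2}. I expect (ii) — a cosmetic matching rather than a genuine analytic difficulty — to be the fussiest step.
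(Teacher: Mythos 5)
Your proposal follows essentially the same route as the paper: expand the tangential traces in vector spherical harmonics, note that propagation from $\Gamma_R$ to $\Gamma_\rho$ acts diagonally with the multipliers appearing in \eqref{eq:cal3}--\eqref{eq:cal4}, bound those multipliers via Lemma~\ref{lm:hklest} (cases $k\ge 2\pi/L$ and $k^*<1/2$ exactly as you describe), and control the coefficient differences on $\Gamma_R$ by \eqref{eq:delerr} together with Lemma~\ref{lm:kM}. The constants you obtain match the paper's, so the argument is correct and there is nothing further to add.
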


\begin{proof}
From \eqref{eq:delerr}, \eqref{eq:J_norm}, \eqref{eq:hklest1} and \eqref{eq:hklest2}, we have
\begin{align*}
    &\| \hat{\vec{x}}\times \vec{E}(\vec{x};k)-\hat{\vec{x}}\times\vec{E}^{\delta}(\vec{x};k)\|_{L_t^2(\Gamma_{\rho})}\\
    \leq& \left(\sum_{n=1}^{\infty}\sum_{m=-n}^{n}\Big( |\alpha_{k,n,m}-\alpha_{k,n,m}^{\delta}|^2 +(4+10kR)^2|\beta_{k,n,m}-\beta_{k,n,m}^{\delta}|^2 \Big)\right)^{1/2}\\
    \leq& (4+10kR)\| \hat{\vec{x}}\times \vec{E}(\vec{x};k)-\hat{\vec{x}}\times\vec{E}^{\delta}(\vec{x};k)\|_{L_t^2(\Gamma_{R})}\\
    \leq& (4+10kR)kM\|\vec{J}\|_{\vec{p},0}\delta,
\end{align*}
which gives \eqref{eq:err_dirE}.

Using \eqref{eq:delerr},\ \eqref{eq:J_norm},\ \eqref{eq:hklest3} and \eqref{eq:hklest4}, we have
\begin{align*}
    &\| \hat{\vec{x}}\times \nabla\times\vec{E}(\vec{x};k)-\hat{\vec{x}}\times
     \nabla\times\vec{E}^{\delta}(\vec{x};k)\|_{L_t^2(\Gamma_{\rho})}\\
    \leq&\left(\sum_{n=1}^{\infty}\sum_{m=-n}^{n}\Big(C_1^2k^2|\alpha_{k,n,m}-\alpha_{k,n,m}^{\delta}|^2
     +49k^4R^2|\beta_{k,n,m}-\beta_{k,n,m}^{\delta}|^2 \Big)\right)^{1/2}\\
    \leq& (C_1+7Rk)k\| \hat{\vec{x}}\times \vec{E}(k,x)-\hat{\vec{x}}\times\vec{E}^{\delta}(k,x)\|_{L_t^2(\Gamma_{R})}\\
    \leq& (C_1+7Rk)k^2M\|\vec{J}\|_{\vec{p},0}\delta,
\end{align*}
which yields the desired estimate $\eqref{eq:err_neuE1}$.

Similar to \eqref{eq:err_neuE1}, using \eqref{eq:delerr}, \eqref{eq:J_norm}, \eqref{eq:hklest3} and \eqref{eq:hklest4}, and noticing $k^*=2\pi\lambda/L< 1/2$, we have
\begin{align}
    &\| \hat{\vec{x}}\times \nabla\times\vec{E}(k^*,\vec{x})-\hat{\vec{x}}\times \nabla\times\vec{E}^{\delta}(k^*,\vec{x})\|_{L_t^2(\Gamma_{\rho})}\notag\\
    \leq& \left(\sum_{n=1}^{\infty}\sum_{m=-n}^{n}\Big( C_2^2|\alpha_{k^*,n,m}-\alpha_{k^*,n,m}^{\delta}|^2 +49k^4R^2|\beta_{k^*,n,m}-\beta_{k^*,n,m}^{\delta}|^2 \Big)\right)^{1/2}\notag\\
    \leq& (C_2+7Rk^{*2})\| \hat{\vec{x}}\times \vec{E}(k^*,\vec{x})-\hat{\vec{x}}\times\vec{E}^{\delta}(k^*,\vec{x})\|_{L_t^2(\Gamma_{R})}\notag\\
    \leq& (C_2+7Rk^{*2})k^*M\|\vec{J}\|_{\vec{p},0}\delta, \notag
\end{align}
then we obtain $\eqref{eq:err_neuE2}$, and the proof is completed.
\end{proof}

Thanks to Theorem \ref{th:Eerr}, we can show the error estimates of $|a_{\vec{l}}-a_{\vec{l}}^{\delta}|$ and $|b_{\vec{l}}-b_{\vec{l}}^{\delta}|$.

\begin{theorem}\label{th:aberr}
For $\vec{l}\in\mathbb{Z}^3,\ |\vec{l}|\leq N$, $k=2\pi |\vec{l}|/L$ and $\lambda<1/2$, we have
\begin{align}
    & |a_{\vec{l}}-a_{\vec{l}}^{\delta}|\leq \frac{C_3}{C_N}k^2\|\vec{J}\|_{\vec{p},0}\delta, \quad 1\leq|\vec{l}|\leq N,\label{eq:aerr}\\
    & |b_{\vec{l}}-b_{\vec{l}}^{\delta}|\leq \frac{C_3}{C_N}k\|\vec{J}\|_{\vec{p},0}\delta, \quad 1\leq|\vec{l}|\leq N,\label{eq:berr}\\
    & |a_{\vec{0}}-a_{\vec{0}}^{\delta}|\leq \left(C_4\delta+\frac{4\pi^2C_3\lambda\delta}{C_N L^2}(N^2+3N)+\frac{2\lambda}{\sqrt{N}L^{3/2}}\right)\|\vec{J}\|_{\vec{p},0}, \label{eq:a0err}
\end{align}
where the constant $C_N$ depends only on $N$, and $C_3$ and $C_4$ depend only on $L$, $R$, $\rho$, $\lambda$ and $\vec{J}$.
\end{theorem}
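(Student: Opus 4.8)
The plan is to derive each of the three estimates from the explicit formulas \eqref{eq:al_form}, \eqref{eq:bl_form}, \eqref{eq:al_form_new}, \eqref{eq:bl_form_new}, \eqref{eq:a0}, \eqref{eq:a0_form_new} together with the boundary-data estimates in Theorem \ref{th:Eerr}. First, for $1\le|\vec l|\le N$ with $k=2\pi|\vec l|/L$, I would subtract \eqref{eq:al_form} from \eqref{eq:al_form_new}; the two integrals are over $\Gamma_R$ and $\Gamma_\rho$ respectively, but both represent the same quantity because the integrand is (up to the factor $1/(\mi k|\vec v|^2L^3)$) the weak form of the identity obtained from \eqref{eq:maxwell} applied on the annulus $\rho>|\vec x|>R$ where $\vec E$ is source-free; hence one may legitimately compare the exact $\Gamma_\rho$-integral with the noisy $\Gamma_\rho$-integral. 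Then
\[
 |a_{\vec l}-a_{\vec l}^{\delta}|\le \frac{1}{k|\vec v_{\vec l}|^2 L^3}
 \Bigl(\|\hat{\vec x}\times\nabla\times\vec E-\hat{\vec x}\times\nabla\times\vec E^{\delta}\|_{L_t^2(\Gamma_\rho)}\,\|\vec w_{\vec l}\phi_{\vec l}\|_{L_t^2(\Gamma_\rho)}
 +\|\hat{\vec x}\times\vec E-\hat{\vec x}\times\vec E^{\delta}\|_{L_t^2(\Gamma_\rho)}\,\|\nabla\times(\vec w_{\vec l}\phi_{\vec l})\|_{L_t^2(\Gamma_\rho)}\Bigr),
\]
by Cauchy--Schwarz. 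The tangential traces $\|\vec w_{\vec l}\phi_{\vec l}\|$ and $\|\nabla\times(\vec w_{\vec l}\phi_{\vec l})\|$ on $\Gamma_\rho$ are bounded by $|\vec w_{\vec l}|$ and by $(2\pi|\vec l|/L)|\vec w_{\vec l}|$ times $\sqrt{4\pi}\rho$ respectively (with $|\vec w_{\vec l}|=|\vec l|\,|\vec v_{\vec l}|$), so after inserting \eqref{eq:err_dirE} and \eqref{eq:err_neuE1} the $|\vec v_{\vec l}|$ factors cancel and every surviving power of $|\vec l|$ is absorbed into powers of $k$; collecting the $|\vec l|$-dependent lower bound on $|\vec v_{\vec l}|$ (which is bounded below uniformly for admissible $\vec p$) into a constant $C_N$ depending only on $N$, and the $L,R,\rho,\lambda$-dependence into $C_3$, yields \eqref{eq:aerr}. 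The estimate \eqref{eq:berr} is identical but with $\vec v_{\vec l}$ in place of $\vec w_{\vec l}$, so one power of $k$ is saved, giving the linear-in-$k$ bound.

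For \eqref{eq:a0err} I would subtract \eqref{eq:a0} from \eqref{eq:a0_form_new} and also from \eqref{eq:a0N}, then use the triangle inequality
\[
 |a_{\vec 0}-a_{\vec 0}^{\delta}|\le |a_{\vec 0}-a_{\vec 0}^{N}|+|a_{\vec 0}^{N}-a_{\vec 0}^{N,\delta}|,
\]
where the first term is already bounded by Theorem \ref{th:a0error} with $\eqref{eq:a0a0N}$, contributing the $2\lambda/(\sqrt N L^{3/2})$ piece. For the second term, the difference consists of a boundary-integral part at the single small wavenumber $k^*$ and a finite sum $\sum_{|j|=1}^{N}\frac{\sin(j-\lambda)\pi}{(j-\lambda)\pi}(a_j-a_j^{\delta})$. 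The boundary-integral part is estimated exactly as before but using the small-$k$ estimate \eqref{eq:err_neuE2} in place of \eqref{eq:err_neuE1}, producing the term $C_4\delta\|\vec J\|_{\vec p,0}$ with $\lambda\pi/\sin\lambda\pi$ and the geometric constants folded into $C_4$. For the finite sum, I bound $|\sin(j-\lambda)\pi/((j-\lambda)\pi)|\le 1$ and apply \eqref{eq:aerr} with $k=2\pi|j|/L\le 2\pi N/L$, which gives each term $\le \frac{C_3}{C_N}\frac{4\pi^2 N^2}{L^2}\|\vec J\|_{\vec p,0}\delta$; since there are $2N$ terms (for $j=\pm1,\dots,\pm N$) one gets a bound of order $\frac{C_3}{C_N}\frac{N^2\cdot N}{L^2}$, but a more careful count using $|j|$ instead of $N$ in each summand gives $\sum_{|j|=1}^{N}|j|^2 = 2\sum_{j=1}^N j^2 \le \tfrac{2}{3}N^3+N^2+\tfrac13 N$, and after multiplying by the prefactor $\lambda\pi/\sin\lambda\pi$ and regrouping one obtains the stated form $\frac{4\pi^2 C_3\lambda}{C_N L^2}(N^2+3N)$; I will adjust the definition of $C_3$ to absorb the remaining numerical factors and the $\lambda\pi/\sin\lambda\pi$ factor so the displayed bound holds.

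The main obstacle is the first step: justifying that the exact Fourier coefficient $a_{\vec l}$, originally expressed as a $\Gamma_R$-integral in \eqref{eq:al_form}, equals the corresponding $\Gamma_\rho$-integral so that it can legitimately be compared with the noisy quantity $a_{\vec l}^{\delta}$ defined on $\Gamma_\rho$. This requires applying Green's second vector identity to $\vec E$ and $\vec w_{\vec l}\phi_{\vec l}$ on the shell $R<|\vec x|<\rho$, where $\vec E$ solves the homogeneous equation $\nabla\times\nabla\times\vec E-k^2\vec E=\vec 0$ and $\vec w_{\vec l}\phi_{\vec l}$ satisfies $\nabla\times\nabla\times(\vec w_{\vec l}\phi_{\vec l})=k^2\vec w_{\vec l}\phi_{\vec l}$ by \eqref{eq:k1} (recall $k=2\pi|\vec l|/L$), so the shell integral of $\vec E\cdot(\nabla\times\nabla\times(\vec w_{\vec l}\phi_{\vec l})-k^2\vec w_{\vec l}\phi_{\vec l})$ vanishes and only the two boundary terms on $\Gamma_R$ and $\Gamma_\rho$ survive with opposite signs; this is precisely the device borrowed from \cite{four}, and once it is in place the remaining work is the bookkeeping of constants sketched above.
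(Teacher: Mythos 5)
Your treatment of \eqref{eq:aerr} and \eqref{eq:berr} matches the paper's: subtract the exact and noisy formulas on $\Gamma_\rho$, apply Cauchy--Schwarz, insert \eqref{eq:err_dirE} and \eqref{eq:err_neuE1}, and use $|\vec{w}_{\vec{l}}|=|\vec{l}|\,|\vec{v}_{\vec{l}}|$ together with $C_N=\min_{1\le|\vec{l}|\le N}|\vec{p}\times\hat{\vec{l}}|$ and $2\pi\le kL$ to collect the constants into $C_3$. Your explicit justification, via Green's identity on the shell $R<|\vec{x}|<\rho$, that the exact coefficient can legitimately be written as a $\Gamma_\rho$-integral is a point the paper leaves implicit, and it is correct. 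The decomposition for \eqref{eq:a0err} into the truncation tail $|a_{\vec{0}}-a_{\vec{0}}^{N}|$, the boundary-integral part at $k^*$, and the finite-sum part likewise agrees with the paper's $I_3$, $I_1$, $I_2$.

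However, there is a genuine gap in your estimate of the finite sum. Bounding $|\sin((j-\lambda)\pi)/((j-\lambda)\pi)|$ by $1$ discards exactly the decay that makes the bound work: you then obtain $\sum_{|j|=1}^{N}|j|^2\sim\tfrac{2}{3}N^3$, and no regrouping or absorption of the prefactor $\lambda\pi/\sin(\lambda\pi)$ into constants can turn an $O(N^3)$ quantity into the claimed $N^2+3N$, nor does it produce the explicit factor $\lambda$ appearing in \eqref{eq:a0err}. The step you are missing is the exact identity $\sin((j-\lambda)\pi)=(-1)^{j+1}\sin(\lambda\pi)$ for integer $j$, so that after multiplying by the prefactor each summand carries the factor $\lambda/|j-\lambda|$ rather than a quantity of order one; the sum then becomes
\[
\sum_{|j|=1}^{N}\frac{\lambda}{|j-\lambda|}\,\frac{4\pi^2 C_3 j^2}{C_N L^2}\|\vec{J}\|_{\vec{p},0}\delta
=\frac{4\pi^2 C_3\lambda\delta}{C_N L^2}\|\vec{J}\|_{\vec{p},0}\sum_{j=1}^{N}\frac{2j^3}{j^2-\lambda^2},
\]
and an integral comparison together with $\lambda<1/2$ bounds the last sum by $N^2+3N$. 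With this correction the rest of your argument coincides with the paper's proof.
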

\begin{proof}
For $1\leq|\vec{l}|\leq N$, by using $\eqref{eq:err_dirE},\ \eqref{eq:err_neuE1}$, we have
\begin{align}
    &|a_{\vec{l}}-a_{\vec{l}}^{\delta}|\notag\\
    \leq& \frac{1}{k|\vec{v}_{\vec{l}}|^2 L^3}
    \int_{\Gamma_{\rho}} \Big( |(\hat{\vec{x}}\times \nabla \times
    \vec{E}(\vec{x};k)-\hat{\vec{x}}\times \nabla \times \vec{E}^{\delta}(\vec{x};k)) \cdot
     (\vec{w}_{\vec{l}}\overline{\phi_{\vec{l}}(\vec{x})})|\notag\\
    &+|(\hat{\vec{x}}\times  \vec{E}(\vec{x};k)-\hat{\vec{x}}\times  \vec{E}^{\delta}(\vec{x};k)) \cdot
     \nabla\times(\vec{w}_{\vec{l}}\overline{\phi_{\vec{l}}(\vec{x})})| \Big)\dif s(\vec{x}) \notag\\
    \leq&  \frac{ 2\sqrt{\pi}\rho}{k|\vec{v}_{\vec{l}}|^2 L^3} \Big(\| \hat{\vec{x}}\times \nabla\times\vec{E}(\vec{x};k)-\hat{\vec{x}}\times \nabla\times\vec{E}^{\delta}(\vec{x};k)\|_{L_t^2(\Gamma_{\rho})}|\vec{l}| |\vec{v}_{\vec{l}}|  \notag\\
    &+ \| \hat{\vec{x}}\times \vec{E}(\vec{x};k)-\hat{\vec{x}}\times\vec{E}^{\delta}(\vec{x};k)\|_{L_t^2(\Gamma_{\rho})}
    |\vec{l}|^2 |\vec{v}_{\vec{l}}|\frac{2\pi}{L}\Big)\notag\\
    \leq& \frac{ 2\sqrt{\pi}\rho}{k|\vec{v}_{\vec{l}}| L^3}\left((C_1+7Rk)M\|\vec{J}\|_{\vec{p},0}k^2|\vec{l}|\delta
    +(4+10kR)M\|\vec{J}\|_{\vec{p},0}k\frac{2\pi}{L}|\vec{l}|^2\delta\right)\notag\\
    \leq& \frac{2\sqrt{\pi}\rho}{|\vec{p}\times\hat{\vec{l}}|L^3}(C_1+4+17kR)
        M\|\vec{J}\|_{\vec{p},0}k\delta, \notag
\end{align}
and
    \begin{align}
    &|b_{\vec{l}}-b_{\vec{l}}^{\delta}|\notag\\
    \leq& \frac{1}{2\pi k|\vec{v}_{\vec{l}}|^2 L^2}
    \int_{\Gamma_{\rho}} \Big( |(\hat{\vec{x}}\times \nabla \times \vec{E}(\vec{x};k)-\hat{\vec{x}}\times \nabla \times \vec{E}^{\delta}(\vec{x};k)) \cdot (\vec{v}_{\vec{l}}\overline{\phi_{\vec{l}}(\vec{x})})|\notag\\
    &+|(\hat{\vec{x}}\times  \vec{E}(\vec{x};k)-\hat{\vec{x}}\times  \vec{E}^{\delta}(\vec{x};k)) \cdot \nabla\times(\vec{v}_{\vec{l}}\overline{\phi_{\vec{l}}(\vec{x})})| \Big)\dif s(\vec{x}) \notag\\
    \leq&  \frac{ 2\sqrt{\pi}\rho}{2\pi k|\vec{v}_{\vec{l}}|^2 L^2} \Big(\| \hat{\vec{x}}\times \nabla\times\vec{E}(\vec{x};k)-\hat{\vec{x}}\times \nabla\times\vec{E}^{\delta}(\vec{x};k)\|_{L_t^2(\Gamma_{\rho})} |\vec{v}_{\vec{l}}|  \notag\\
    &+ \| \hat{\vec{x}}\times \vec{E}(\vec{x};k)-\hat{\vec{x}}\times\vec{E}^{\delta}(\vec{x};k)\|_{L_t^2(\Gamma_{\rho})}
    |\vec{l}| |\vec{v}_{\vec{l}}|\frac{2\pi}{L}\Big)\notag\\
    \leq& \frac{ 2\sqrt{\pi}\rho}{2\pi k|\vec{v}_{\vec{l}}| L^2}\left((C_1+7Rk)M\|\vec{J}\|_{\vec{p},0}k^2\delta
    +(4+10kR)M\|\vec{J}\|_{\vec{p},0}k\frac{2\pi}{L}|\vec{l}|\delta\right)\notag\\
    \leq& \frac{2\sqrt{\pi}\rho}{|\vec{p}\times\hat{\vec{l}}|L^3}(C_1+4+17kR)
       M\|\vec{J}\|_{\vec{p},0}\delta,\notag
    \end{align}
where $\hat{\vec{l}}$ denotes the unit vector $\vec{l}/|\vec{l}|$.

Since $2\pi\leq kL$, we get $C_1+4+17kR\leq ((C_1+4)L/2\pi+17R)k$. Let
\begin{align*}
    &C_3:=\frac{2\sqrt{\pi}\rho}{L^3}\left((C_1+4)\frac{L}{2\pi}+17R\right)M, \\
    &C_N:=\min_{1\leq|\vec{l}|\leq N}|\vec{p}\times\hat{\vec{l}}|,
\end{align*}
then we obtain
\[
    |a_{\vec{l}}-a_{\vec{l}}^{\delta}|\leq\frac{C_3}{C_N}k^2\|\vec{J}\|_{\vec{p},0}\delta,\quad
    |b_{\vec{l}}-b_{\vec{l}}^{\delta}|\leq\frac{C_3}{C_N}k\|\vec{J}\|_{\vec{p},0}\delta,
\]
thus we obtain \eqref{eq:aerr} and \eqref{eq:berr}.

To prove $\eqref{eq:a0err}$, from $\eqref{eq:a0}$ and $\eqref{eq:a0_form_new}$,
we obtain
    \begin{align}
         &|a_{\vec{0}}-a_{\vec{0}}^{\delta}| \notag\\
    \leq&\frac{\lambda\pi}{\sin{\lambda\pi}}\bigg(
    \bigg|\frac{1}{\mi k^*L^3|\vec{p}\times\vec{l}^*|^2}
    \int_{\Gamma_{\rho}} \Big( \hat{\vec{x}}\times \nabla \times (\vec{E}(\vec{x};k)-\vec{E}^{\delta}(\vec{x};k)) \notag\\
    &\cdot (\vec{w}^*\overline{\phi_{\vec{l}^*}(\vec{x})})
    +\hat{\vec{x}}\times  (\vec{E}(\vec{x};k)-\vec{E}^{\delta}(\vec{x};k)) \cdot \nabla\times(\vec{w}^*\overline{\phi_{\vec{l}^*}(\vec{x})}) \Big)\dif s(\vec{x})   \bigg |\notag\\
    &+\sum^{N}_{|j|=1}\bigg|  \frac{\sin{(j-\lambda)\pi}}{(j-\lambda)\pi}(a_{\vec{l}}-a_{\vec{l}}^{\delta})  \bigg |
    +\sum^{\infty}_{|j|=N+1}\bigg|\frac{\sin{(j-\lambda)\pi}}{(j-\lambda)\pi}a_{\vec{l}}\bigg |\bigg)\notag\\
    =&I_1+I_2+I_3.\notag
    \end{align}
From $\eqref{eq:err_dirE}$ and $\eqref{eq:err_neuE2}$, we obtain
    \begin{align}
    I_1
    \leq&\frac{\lambda\pi}{k^*L^3|\vec{p}\times\vec{l}^*|^2\sin{\lambda\pi}}
    \Big(  2\sqrt{\pi}\rho(C_2+7k^{*2}R)k^*M\|\vec{J}\|_{\vec{p},0}\delta  |\vec{l}^*|   |\vec{p}\times\vec{l}^*|  \notag\\
    &+2\sqrt{\pi}\rho(4+10k^*R) k^*M\|\vec{J}\|_{\vec{p},0}\delta\frac{2\pi}{L}|\vec{l}^*|^2 |\vec{p}\times\vec{l}^*| \Big) \notag\\
    \leq&\frac{2\lambda\pi^{3/2}\rho M}{L^3|\vec{p}\times\hat{\vec{l}}^*|\sin{\lambda\pi}}
        ((C_2+7k^{*2}R)+(4+10k^*R)k^*)\|\vec{J}\|_{\vec{p},0}\delta\notag\\
    \leq& C_4 \|\vec{J}\|_{\vec{p},0}\delta,\notag
    \end{align}
where $\hat{\vec{l}}^*=(1,0,0)^{\top}$ and
\begin{align}
C_4:=\frac{2\lambda\pi^{3/2}\rho M}{L^3|\vec{p}\times\hat{\vec{l}}^*|\sin{\lambda\pi}}
    (C_2+4k^*+17k^{*2}R).\notag
\end{align}
Using \eqref{eq:aerr}, and noticing $\lambda < 1/2$, we have
\begin{align*}
    I_2=& \frac{\lambda\pi}{\sin{\lambda\pi}}\sum^{N}_{|j|=1}\bigg|\frac{\sin (j-\lambda)\pi}{(j-\lambda)\pi}(a_{\vec{l}}-a_{\vec{l}}^{\delta}) \bigg |\\
    \leq& \frac{4\pi^2 C_3\|\vec{J}\|_{\vec{p},0}\lambda\delta}{C_N L^2}
            \sum^{N}_{|j|=1}\left|\frac{j^2}{j-\lambda}\right|
    = \frac{4\pi^2 C_3\|\vec{J}\|_{\vec{p},0}\lambda\delta}
       {C_N L^2}\sum^N_{j=1}\frac{2j^3}{j^2-\lambda^2}\\
    \leq& \frac{4\pi^2 C_3\|\vec{J}\|_{\vec{p},0}\lambda\delta}{C_N L^2}
          \sum^N_{j=1}\int_j^{j+1}\frac{2t^3}{t^2-\lambda^2}\,\dif t
    =\frac{4\pi^2 C_3\|\vec{J}\|_{\vec{p},0}\lambda\delta}
        {C_N L^2}\int_1^{N+1}\frac{2t^3}{t^2-\lambda^2}\,\dif t\\
    =&\frac{4\pi^2 C_3\|\vec{J}\|_{\vec{p},0}\lambda\delta}{C_N L^2}
      (N^2+2N+\lambda^2\ln{((N+1)^2-\lambda^2)}-\lambda^2\ln{(1-\lambda^2)})\\
    \leq &\frac{4\pi^2 C_3\|\vec{J}\|_{\vec{p},0}\lambda\delta}
       {C_N L^2}\left(N^2+2N+2\lambda^2N+\frac{1}{2}\right) \\
    \leq& \frac{4\pi^2 C_3\|\vec{J}\|_{\vec{p},0}\lambda\delta}{C_N L^2}(N^2+3N).
\end{align*}
Combining with $(\ref{eq:a0a0N})$
    \begin{align}
    I_3=|a_{\vec{0}}-a_{\vec{0}}^{N}|  \leq \frac{2\lambda}{\sqrt{N}L^{3/2}}\|\vec{J}\|_{\vec{p},0},\notag
    \end{align}
In summary, we obtain
\begin{align}
|a_{\vec{0}}-a_{\vec{0}}^{\delta}|\leq \left(C_4\delta+\frac{4\pi^2C_3\lambda\delta}{C_N L^2}(N^2+3N)+\frac{2\lambda}{\sqrt{N}L^{3/2}}\right)\|\vec{J}\|_{\vec{p},0}.\notag
\end{align}
This completes the proof.
\end{proof}
\par
Now we are in a position to present the main result.
\begin{theorem}\label{th:mainerr}
Let $\vec{J}$ be a function in $(H^{\sigma}_{\vec{p}}(V_0))^3$ and $0\leq\mu<\sigma$, then the following estimate holds
\begin{align}
    \|\vec{J}-\vec{J}_N^{\delta}\|_{\vec{p},\mu}
    \leq\left(C_5\delta+\frac{2\lambda}{\sqrt{N}}+C_6N^{\mu+7/2}\delta\right)
       \|\vec{J}\|_{\vec{p},0}+N^{\mu-\sigma}\|\vec{J}\|_{\vec{p},\sigma}.\label{eq:mainerr}
\end{align}
If, in addition, we take $\tau\geq 1$ and $N=[\tau\delta^{\frac{-1}{\mu+4}}]+1$, then
\begin{align}
    \|\vec{J}-\vec{J}_N^{\delta}\|_{\vec{p},\mu}
       \leq&C_5\|\vec{J}\|_{\vec{p},0}\delta+ \frac{1}{\tau^{\sigma-\mu}}
       \|\vec{J}\|_{\vec{p},\sigma}\delta^{\frac{\sigma-\mu}{\mu+4}}\notag\\
    &+\left( \frac{2\lambda}{\sqrt{\tau}}+(2\tau)^{\mu+7/2}C_6\right)\|\vec{J}\|_{\vec{p},0}
    \delta^{\frac{1}{2(\mu+4)}},\notag
\end{align}
where $[X]$ denotes the largest integer that is smaller than $X+1$.
    \end{theorem}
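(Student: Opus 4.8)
The plan is to assemble the error estimate (\ref{eq:mainerr}) by decomposing the total error into a truncation part and a data-perturbation part via the triangle inequality
\[
    \|\vec{J}-\vec{J}_N^{\delta}\|_{\vec{p},\mu}
       \leq \|\vec{J}-\vec{J}_N\|_{\vec{p},\mu}+\|\vec{J}_N-\vec{J}_N^{\delta}\|_{\vec{p},\mu},
\]
and then bounding the two pieces separately. The first term is controlled immediately by Theorem \ref{th:JJNerr}, which gives $\|\vec{J}-\vec{J}_N\|_{\vec{p},\mu}\leq N^{\mu-\sigma}\|\vec{J}\|_{\vec{p},\sigma}$, accounting for the last summand in (\ref{eq:mainerr}). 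For the second term I would expand $\vec{J}_N-\vec{J}_N^{\delta}$ in the Fourier basis: its coefficients are precisely $a_{\vec 0}-a_{\vec 0}^{N,\delta}$ for the constant mode and $a_{\vec l}-a_{\vec l}^{\delta}$, $b_{\vec l}-b_{\vec l}^{\delta}$ for $1\leq|\vec l|\leq N$, so by the definition (\ref{eq:Jnorm}) of the $\|\cdot\|_{\vec p,\mu}$ norm,
\[
    \|\vec{J}_N-\vec{J}_N^{\delta}\|_{\vec{p},\mu}^2
    =L^3|a_{\vec 0}-a_{\vec 0}^{N,\delta}|^2
    +\sum_{1\leq|\vec l|\leq N}(1+|\vec l|^2)^{\mu}\bigl(L^3|a_{\vec l}-a_{\vec l}^{\delta}|^2+4\pi^2L|\vec p\times\vec l|^2|b_{\vec l}-b_{\vec l}^{\delta}|^2\bigr).
\]

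Next I would insert the per-coefficient bounds from Theorem \ref{th:aberr}. For $1\leq|\vec l|\leq N$ one has $|a_{\vec l}-a_{\vec l}^{\delta}|\leq (C_3/C_N)k^2\|\vec J\|_{\vec p,0}\delta$ and $|b_{\vec l}-b_{\vec l}^{\delta}|\leq (C_3/C_N)k\|\vec J\|_{\vec p,0}\delta$ with $k=2\pi|\vec l|/L\leq 2\pi N/L$, so each summand is $O\bigl((1+|\vec l|^2)^{\mu}N^4\delta^2\bigr)$ up to constants; summing over the $O(N^3)$ lattice points $\vec l$ with $|\vec l|\leq N$ and using $(1+|\vec l|^2)^{\mu}\leq (1+N^2)^{\mu}\lesssim N^{2\mu}$ gives a contribution of order $N^{2\mu}\cdot N^4\cdot N^3\,\delta^2\|\vec J\|_{\vec p,0}^2=N^{2\mu+7}\delta^2\|\vec J\|_{\vec p,0}^2$, i.e. an $N^{\mu+7/2}\delta\|\vec J\|_{\vec p,0}$ term after taking square roots — this is the $C_6 N^{\mu+7/2}\delta$ summand. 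For the constant mode I would use the bound (\ref{eq:a0err}) on $|a_{\vec 0}-a_{\vec 0}^{\delta}|$ directly; its three pieces yield the $C_5\delta$ term (from $C_4\delta$ and the absorbed $4\pi^2C_3\lambda\delta(N^2+3N)/(C_NL^2)$, the latter being of lower order than $N^{\mu+7/2}\delta$ and hence folded into $C_6$) and the $2\lambda/\sqrt N$ term. Collecting everything and renaming constants $C_5,C_6$ (depending only on $L,R,\rho,\lambda,\vec J$, and absorbing the $N$-dependence of $C_N$, which is in fact bounded below uniformly in $N$ since $C_N=\min_{|\vec l|\leq N}|\vec p\times\hat{\vec l}|$ decreases but for the power-of-$\delta$ conclusion one may as well keep it inside $C_6$) gives (\ref{eq:mainerr}).

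For the second assertion I would substitute the prescribed choice $N=[\tau\delta^{-1/(\mu+4)}]+1$. Since $[X]\leq X<[X]+1$, this $N$ satisfies $\tau\delta^{-1/(\mu+4)}\leq N\leq 2\tau\delta^{-1/(\mu+4)}$ (using $\tau\geq 1$ and $0<\delta<1$ so that $\tau\delta^{-1/(\mu+4)}\geq 1$). Then $N^{\mu-\sigma}\leq (\tau\delta^{-1/(\mu+4)})^{\mu-\sigma}=\tau^{\mu-\sigma}\delta^{(\sigma-\mu)/(\mu+4)}$, which gives the $\tau^{-(\sigma-\mu)}\|\vec J\|_{\vec p,\sigma}\delta^{(\sigma-\mu)/(\mu+4)}$ term; $1/\sqrt N\leq 1/\sqrt{\tau\delta^{-1/(\mu+4)}}=\tau^{-1/2}\delta^{1/(2(\mu+4))}$ gives the $(2\lambda/\sqrt\tau)\delta^{1/(2(\mu+4))}$ term; and $N^{\mu+7/2}\delta\leq (2\tau\delta^{-1/(\mu+4)})^{\mu+7/2}\delta=(2\tau)^{\mu+7/2}\delta^{1-(\mu+7/2)/(\mu+4)}=(2\tau)^{\mu+7/2}\delta^{1/(2(\mu+4))}$, using $1-(\mu+7/2)/(\mu+4)=(1/2)/(\mu+4)$, which gives the $(2\tau)^{\mu+7/2}C_6\delta^{1/(2(\mu+4))}$ term. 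The $C_5\delta$ term is left as is (it is already $o(\delta^{1/(2(\mu+4))})$). The main obstacle is the bookkeeping in the second term: one must carefully count lattice points, keep track of which powers of $N$ and $k$ arise from the $(1+|\vec l|^2)^\mu$ weight versus from the coefficient bounds, and verify that the exponent $\mu+7/2$ is exactly what balances against $\delta$ to produce the stated $\delta^{1/(2(\mu+4))}$ rate after the optimization in $N$; everything else is a routine application of the triangle inequality and the earlier theorems.
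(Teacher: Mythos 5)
Your proposal follows the paper's proof essentially verbatim: the same triangle-inequality split into $\|\vec{J}-\vec{J}_N\|_{\vec{p},\mu}$ (bounded by Theorem \ref{th:JJNerr}) and $\|\vec{J}_N-\vec{J}_N^{\delta}\|_{\vec{p},\mu}$ (bounded coefficient-by-coefficient via Theorem \ref{th:aberr}, with the same $O(N^3)$ lattice-point count producing the $C_6N^{\mu+7/2}\delta$ term and the $a_{\vec{0}}$ estimate supplying $C_5\delta+2\lambda/\sqrt{N}$), followed by the same substitution $N=[\tau\delta^{-1/(\mu+4)}]+1$ and the same exponent arithmetic $1-(\mu+7/2)/(\mu+4)=1/(2(\mu+4))$. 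The only quibble is your parenthetical claim that $C_N=\min_{1\leq|\vec{l}|\leq N}|\vec{p}\times\hat{\vec{l}}|$ is uniformly bounded below in $N$ (lattice directions are dense on the sphere, so in general $C_N\to 0$), but the paper glosses over this in exactly the same way by leaving $C_N$ inside $C_6$, so your argument is no weaker than the original.
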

\begin{proof}
From \eqref{eq:Jnorm} and Theorem \ref{th:aberr}, it is readily seen that
\begin{align}
    &\|\vec{J}_N-\vec{J}_N^{\delta}\|_{\vec{p},\mu}\notag\\
    \leq&  \Bigg( L^3\sum_{|\vec{l}|=0}^{N}
      \left(1+|\vec{l}|^2\right)^{\mu}|a_{\vec{l}}-a_{\vec{l}}^{\delta}|^2\notag
    +4\pi^2L\sum_{|\vec{l}|=1}^{N}\left(1+|\vec{l}|^2\right)^{\mu}|\vec{p}\times \vec{l}|^2|b_{\vec{l}}-b_{\vec{l}}^{\delta}|^2  \Bigg)^{1/2} \notag\\
    \leq& \left(C_4L^{3/2}\delta+\frac{4\pi^2 C_3\lambda\delta}{C_N \sqrt{L}}(N^2+3N)
       +\frac{2\lambda }{\sqrt{N}}\right)\|\vec{J}\|_{\vec{p},0}\notag\\
    &+ \frac{4\sqrt{2}\pi^2 C_3\|\vec{J}\|_{\vec{p},0}\delta}{C_N\sqrt{L}}
     \left( \sum_{|\vec{l}|=1}^{N}(1+|\vec{l}|^2)^{\mu}|\vec{l}|^4 \right)^{1/2}\notag\\
    \leq&  \left(C_4L^{3/2}\delta+\frac{4\pi^2 C_3\lambda\delta}{C_N \sqrt{L}}(N^2+3N)
       +\frac{2\lambda}{\sqrt{N}}\right)\|\vec{J}\|_{\vec{p},0}\notag\\
       &+ \frac{4\sqrt{2}\pi^2 C_3\delta}{C_N\sqrt{L}}(1+N^2)^{\mu/2}N^2(2N+1)^{3/2}
       \|\vec{J}\|_{\vec{p},0}\notag\\
    \leq& \left(C_5\delta+\frac{2\lambda }{\sqrt{N}}
               +C_6N^{\mu+7/2}\delta\right)\|\vec{J}\|_{\vec{p},0}\notag
\end{align}
where
\[
    C_5:=C_4L^{3/2},\quad
    C_6:=\frac{4\pi^2 C_3(4\lambda+3\sqrt{6}2^{\mu/2})}{C_N \sqrt{L}}.
\]
Hence, from Theorem \ref{th:JJNerr}, we know
\begin{align}
    \|\vec{J}-\vec{J}_N^{\delta}\|_{\vec{p},\mu}
    \leq& \|\vec{J}_N-\vec{J}_N^{\delta}\|_{\vec{p},\mu}+ \|\vec{J}-\vec{J}_N\|_{\vec{p},\mu} \notag\\
    \leq&  \left(C_5\delta+\frac{2\lambda}{\sqrt{N}}+C_6N^{\mu+7/2}\delta\right)
       \|\vec{J}\|_{\vec{p},0}+N^{\mu-\sigma}\|\vec{J}\|_{\vec{p},\sigma},\notag
\end{align}
which leads to $\eqref{eq:mainerr}$.
Further, let $\tau\geq 1$ and $N=[\tau\delta^{\frac{-1}{\mu+4}}]+1$ and we have
\begin{align}
    \|\vec{J}-\vec{J}_N^{\delta}\|_{\vec{p},\mu}
       \leq&C_5\|\vec{J}\|_{\vec{p},0}\delta+ \frac{1}{\tau^{\sigma-\mu}}
       \|\vec{J}\|_{\vec{p},\sigma}\delta^{\frac{\sigma-\mu}{\mu+4}}\notag\\
    &+\left( \frac{2\lambda}{\sqrt{\tau}}+(2\tau)^{\mu+7/2}C_6\right)\|\vec{J}\|_{\vec{p},0}
    \delta^{\frac{1}{2(\mu+4)}},\notag
\end{align}
which completes the proof of the theorem.
\end{proof}
\section{Numerical simulations and discussions}

In this section, we present several numerical examples to demonstrate the feasibility and effectiveness of the proposed method.

In order to generate data sets, we solve the forward problem via the direct integration. For ${\rm{supp}}\vec{J}\subset V_0$, the unique solution to equation $\eqref{eq:maxwell}$ with the radiation condition $\eqref{eq:radiation}$ is given by $\eqref{eq:M_solve}$. We use the Gauss quadrature to calculate these volume integrals over the $48^3$ Gauss-Legendre points.

Now we specify details of the numerical implementation of the Fourier method. Let $V_0=[-0.5,0.5]^3$ and the radiated fields be measured on the unit sphere $\mathbb{S}^2$, i.e., $R=1$. 
In order to test the stability of the method, we add random noise to the data set. Since
\[
    \hat{\vec{x}}\times \vec{E}=( \hat{\vec{x}}\times \vec{E}\cdot \vec{e}_{\theta})\vec{e}_{\theta}+(\hat{\vec{x}}\times \vec{E}\cdot \vec{e}_{\varphi})\vec{e}_{\varphi},
\]
then the noisy data was given by the following formula:
\[
    \hat{\vec{x}}\times \vec{E}^{\delta}=( \hat{\vec{x}}\times \vec{E}^{\delta}\cdot \vec{e}_{\theta})\vec{e}_{\theta}+
    (\hat{\vec{x}}\times \vec{E}^{\delta}\cdot \vec{e}_{\varphi})\vec{e}_{\varphi},
\]
where
\begin{align*}
    &\hat{\vec{x}}\times \vec{E}^{\delta}\cdot \vec{e}_{\theta} =\hat{\vec{x}}\times \vec{E}\cdot \vec{e}_{\theta}
    +\delta r_1|\hat{\vec{x}}\times \vec{E}\cdot \vec{e}_{\theta}|\me^{\mi\pi r_2},\\
    &\hat{\vec{x}}\times \vec{E}^{\delta}\cdot \vec{e}_{\varphi} =\hat{\vec{x}}\times \vec{E}\cdot \vec{e}_{\varphi}
    +\delta r_3|\hat{\vec{x}}\times \vec{E}\cdot \vec{e}_{\varphi}|\me^{\mi\pi r_4},
\end{align*}
$r_1,r_2,r_3$ and $r_4$ are uniformly distributed random numbers, which range from $-1$ to $1$, and $\delta>0$ is the noise level. In terms of Theorem \ref{th:mainerr}, if not otherwise specified, we choose the truncation order $N$ by the following rule
\begin{equation}\label{eq:NNN}
        N:=[3\delta^{-1/4}]+1,
\end{equation}
and let
\[
    \mathbb{K}_N=\{ 2\pi|\vec{l}|\mid \vec{l}\in \mathbb{Z}^3,1\leq |\vec{l}|\leq N\},\quad
    k^*=2\pi\lambda,\quad \lambda=10^{-2}
\]
be the set of admissible wavenumbers. The radiated data are measured on $8\times 10^4$ uniformly distributed observation points located on the unit sphere $\mathbb{S}^2$. Using these radiated data and the formulae \eqref{eq:cal1}--\eqref{eq:cal4}, we compute the artificial data $\hat{\vec{x}}\times \vec{E}^{\delta}$ and $\hat{\vec{x}}\times\nabla\times \vec{E}^{\delta}$ on the sphere centered at the origin with radius of $\rho=1.2$ for the admissible wavenumbers $\mathbb{K}_N\cup \{k^*\}$. The surface integrals in $\eqref{eq:cal1}$ and $\eqref{eq:cal2}$ were evaluated using the trapezoidal rule. The series $\eqref{eq:cal3}$ and $\eqref{eq:cal4}$ were numerically truncated by $|n|\leq 20$. Finally, the Fourier coefficients $a_{\vec{l}}^{\delta}$, $b_{\vec{l}}^{\delta}$, $1\leq|\vec{l}|\leq N$ and $a_{\vec{0}}^{N,\delta}$ were computed by $\eqref{eq:al_form_new}$, $\eqref{eq:bl_form_new}$ and $\eqref{eq:a0_form_new}$, respectively, and the surface integrals here were also calculated using the trapezoidal rule over a $200\times 400$ grid uniformly points on $\Gamma_{\rho}$.

Numerically, the relative $L^2$ error $\|\vec{J}-\vec{J}_N^{\delta}\|_{(L^2(V_0))^3}/\|\vec{J}\|_{(L^2(V_0))^3}$ has the following discrete form
\begin{equation}
\frac{\left(\sum_{m=1}^{101^3}|\vec{J}(\vec{x}_m)-\vec{J}_N^{\delta}(\vec{x}_m)|^2\right)^{1/2}}
{\left(\sum_{m=1}^{101^3}|\vec{J}(\vec{x}_m)|^2\right)^{1/2}}.
\end{equation}
Here, $\vec{x}_m\in V_0,\ m=1,2,\cdots,101^3$ are uniformly spaced points and the pointwise values $\vec{J}_N^{\delta}(\vec{x}_m),\ m=1,2,\cdots,101^3$ were computed by $\eqref{eq:Jnd}$.

\begin{remark}
We would like to point out that the discrete relative $L^2$ error is insensitive to moderate choices of parameters $\lambda$ and $\rho$. In our experience, our experiments show that $\lambda=10^{-2}, 10^{-3}, 10^{-4}$ and $\rho=1.2, 1.5, 2$ would produce qualitatively the similar reconstructions.
\end{remark}

\noindent\textbf{Example 1.} Reconstruction of a smooth source function. In this example, we aim to reconstruct a smooth source function
\[
    \vec{J}_1=\vec{p}_1 f_1+\vec{p}_1\times \nabla g_1
\]
with $\vec{p}_1=(1,\sqrt{2},\sqrt{3})/\sqrt{6}$ and
\begin{align*}
    & f_1(x_1, x_2, x_3)=\sqrt{6}\exp\left(-80\left((x_1-0.15)^2+(x_2-0.15)^2+x_3^2\right)\right), \\
    & g_1(x_1, x_2, x_3)=\frac{\sqrt{6}}{10}\exp\left(-40(x_1^2+x_2^2+x_3^2)\right).
\end{align*}

Figure \ref{Fig:1} gives the second component of the reconstruction $\vec{J}_{1,N}^{\delta}$ for the source $\vec{J}_1$ with $N=6$ and $\delta=10\%$. For comparison, we give some quantitative results in Table \ref{tab:1} and Table \ref{tab:2}. In Table 1, we list the truncation orders $N(\delta)$ and relative $L^2$ errors of the reconstructions of $\vec{J}_1$ for different noise levels $\delta$.
Table 1 shows that as the noise level $\delta$ decreases, the corresponding truncation order $N(\delta)$ chosen by (\ref{eq:NNN}) increases, meanwhile the relative $L^2$ error decreases. Table \ref{tab:1} also illustrates that our method is insensitive to pollutions of the measured data.

To test the influences of different choices of $N$ and $\delta$ other than \eqref{eq:NNN}, we list the relative $L^2$ errors of the reconstructions of $\vec{J}_1$ for different noise levels $\delta$ and truncation orders $N$ in Table \ref{tab:2}.
As is shown in Table \ref{tab:2}, for some fixed noise level $\delta$, the error decreases as the truncation order $N$ increases, and for some fixed truncation order $N$, the error decreases as the noise level $\delta$ decreases.

\begin{figure}
\centering
\subfloat[]{\includegraphics[scale=0.4]{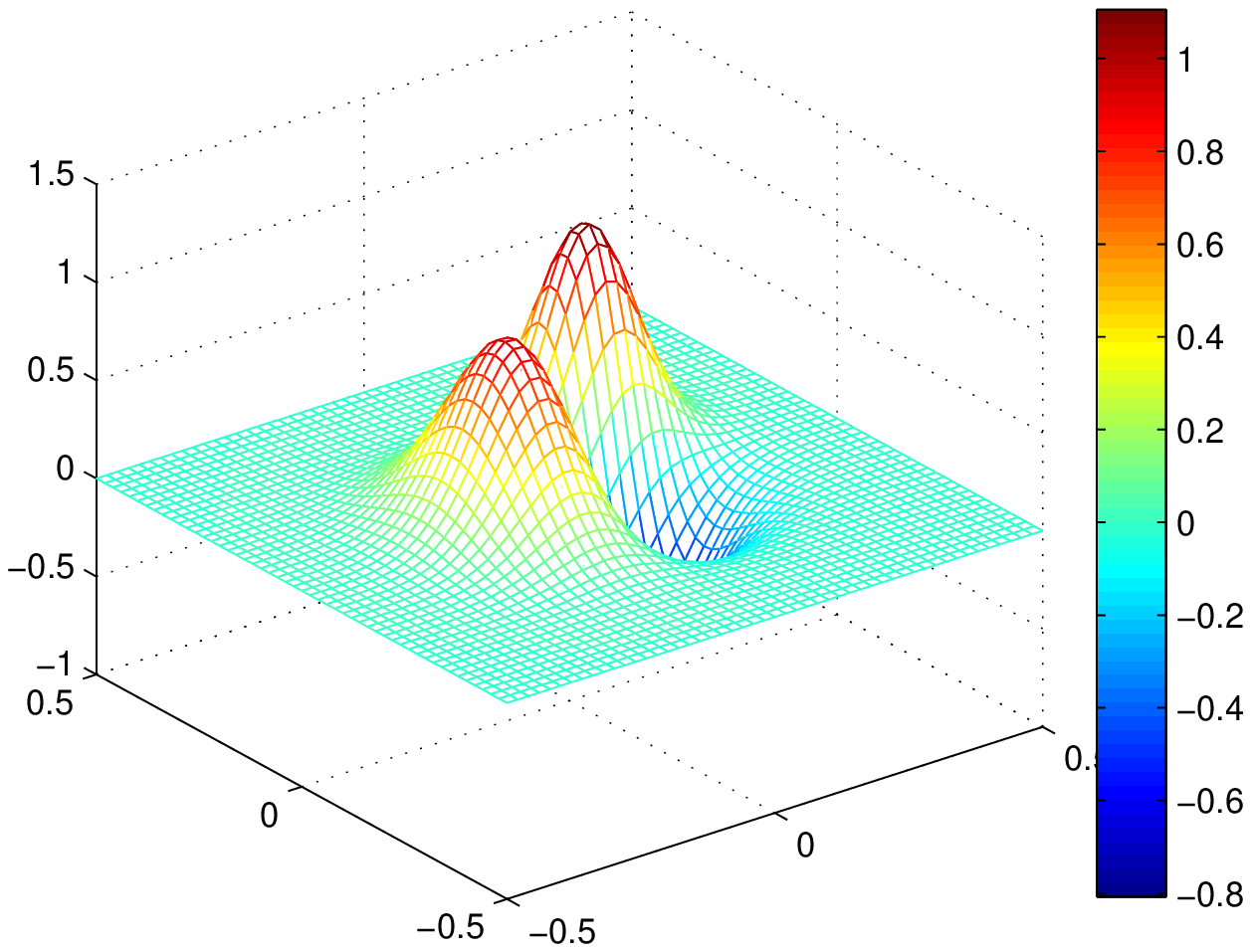}}
\subfloat[]{\includegraphics[scale=0.4]{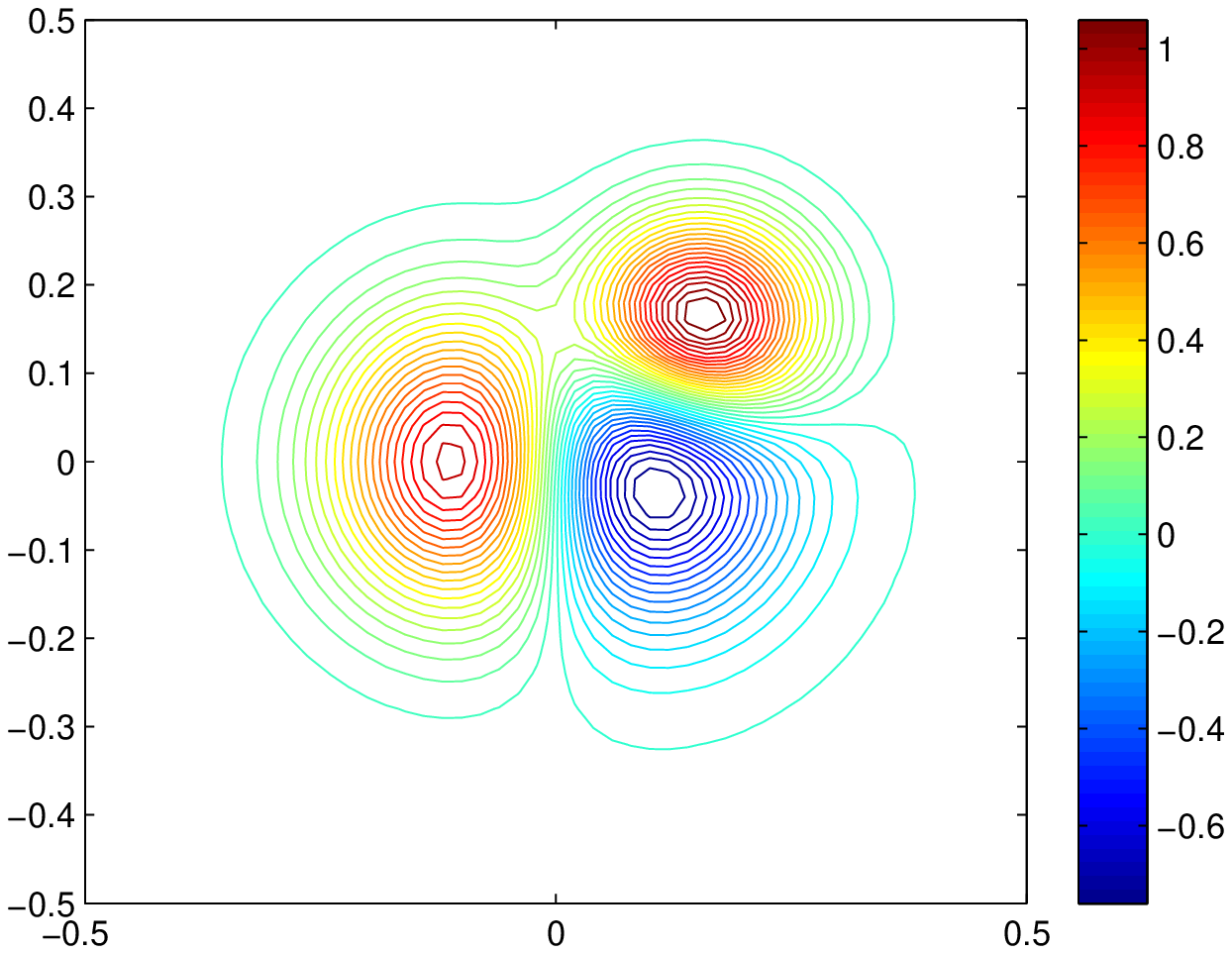}}\\
\subfloat[]{\includegraphics[scale=0.4]{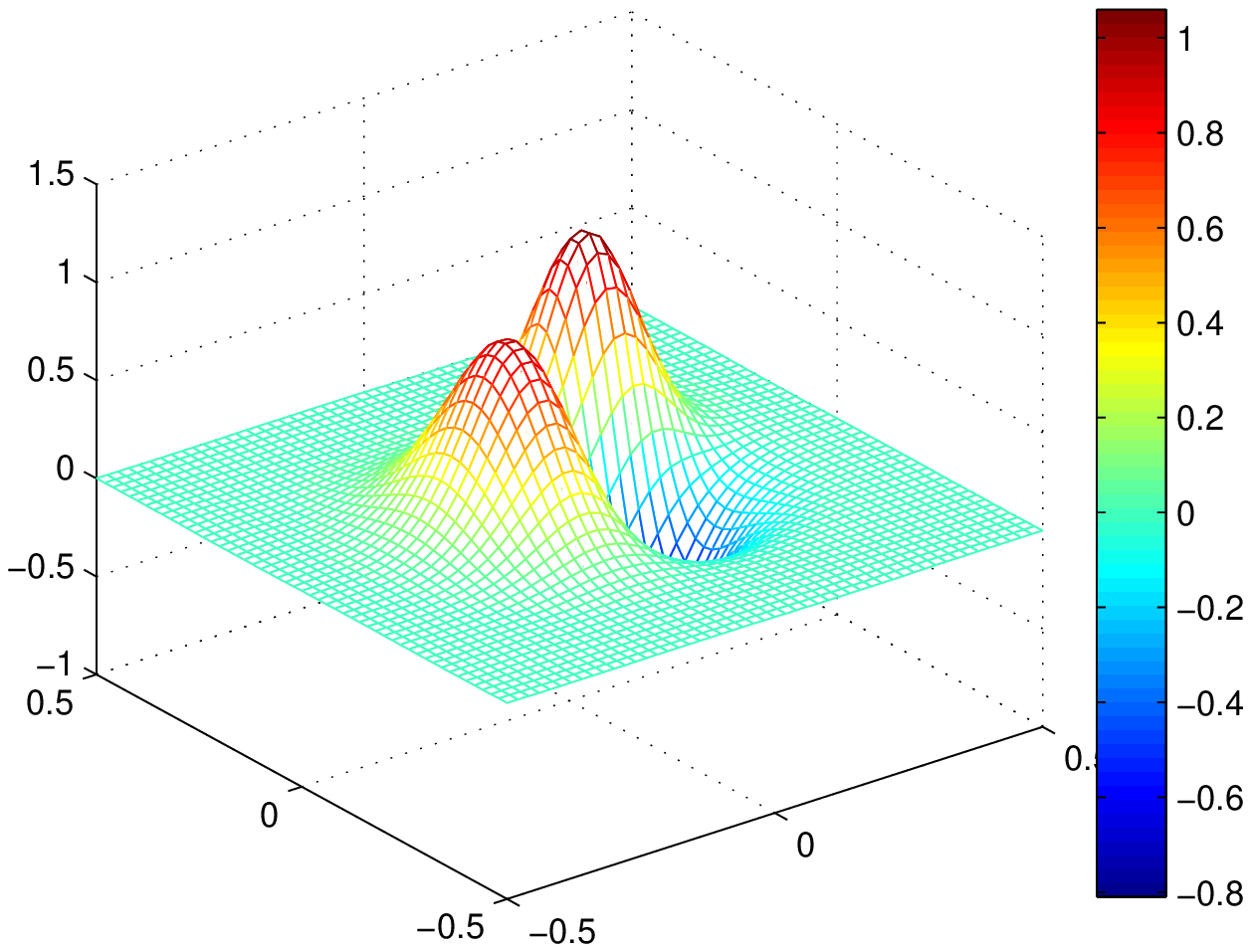}}
\subfloat[]{\includegraphics[scale=0.4]{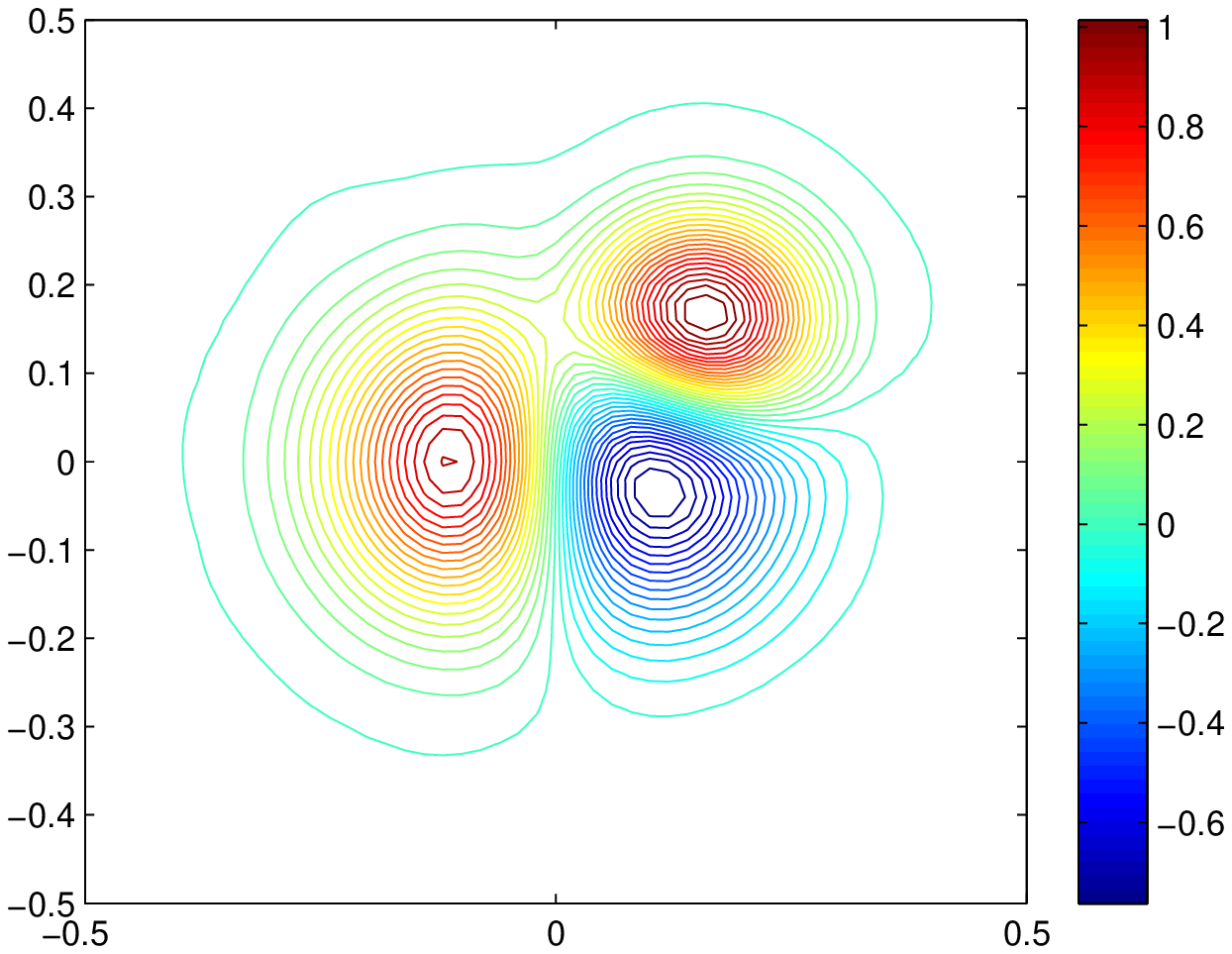}}
\caption{The second component of the reconstruction of $\vec{J}_1$ with $N=6$ and $\delta=10\%$. (a) exact, surface plot (b) exact, contour plot (c) reconstructed, surface plot (d) reconstructed, contour plot.}
\label{Fig:1}
\end{figure}

\begin{table}
\centering
\caption{The truncation orders $N(\delta)$ and relative $L^2$ errors of the reconstructions of $\vec{J}_1$ for different noise levels $\delta$.}
\label{tab:1}
\begin{tabular}{lccccc}
  \toprule
  $\delta$      & $1\%$       & $2\%$          & $5\%$          & $10\%$  \\
  \midrule
  $N(\delta)$ & 10             & 8                   & 7                  & 6       \\
  error       & $1.141\%$      & $1.146\%$           & $1.198\%$          & $1.692\%$ \\
  \bottomrule
\end{tabular}
\end{table}

\begin{table}
\centering
\caption{The relative $L^2$ errors of the reconstructions of $\vec{J}_1$ for different noise levels $\delta$ and truncation orders $N$.}\label{tab:2}
\begin{tabular}{lcccccc}
  \toprule
   & \multicolumn{6}{c}{\underline{\hspace{4cm} $N$ \hspace{4cm}}}\\
              & 5         & 6        & 7        & 8         & 9         & 10 \\
  \midrule
  $\delta=1\%$  & $4.842\%$  & $1.624\%$ & $1.174\%$ & $1.142\%$ & $1.141\%$  & $1.141\%$ \\
  $\delta=2\%$  & $4.843\%$  & $1.628\%$ & $1.179\%$ & $1.146\%$ & $1.145\%$  & $1.145\%$ \\
  $\delta=5\%$  & $4.848\%$  & $1.642\%$ & $1.198\%$ & $1.166\%$ & $1.165\%$  & $1.165\%$ \\
  $\delta=10\%$ & $4.865\%$  & $1.692\%$ & $1.265\%$ & $1.235\%$ & $1.234\%$  & $1.234\%$ \\
  \bottomrule
\end{tabular}
\end{table}

\noindent\textbf{Example 2.} Reconstruction of a discontinuous source function. In this example, we reconstruct a discontinuous source function defined in the cubic domain $V_0$ by
\[
    \vec{J}_2=\vec{p}_2 f_2
\]
where $\vec{p}_2=(1,\sqrt{2},\sqrt{3})^{\top}/\sqrt{6}$ and
\[
    f_2(x_1, x_2, x_3)=
    \begin{cases}
    \sqrt{6},     &\text{if} \ 0\leq x_1,x_2\leq 0.4, -0.2\leq x_3\leq 0.2,\\
    \sqrt{6}/2,   &\text{if} \ (x_1+0.25)^2+(x_2+0.25)^2+x_3^2\leq 0.15^2,\\
    0,     &\text{elsewhere}.
    \end{cases}
\]

The support of $\vec{J}_2$ consists of a cubic and a sphere. Figure 3 gives the second component of the reconstruction $\vec{J}_{2,N}^{\delta}$ for the source $\vec{J}_2$ with different noise levels $\delta$.

Figure \ref{fig:2} shows that as the noise level $\delta$ decreases (the number of Fourier terms $N(\delta)$ increases correspondingly), the error of the reconstruction is reduced. In addition, Gibbs phenomenon in the theory of Fourier series can be observed at the discontinuous points of the source function.

\begin{figure}
\centering
\subfloat[]{\includegraphics[scale=0.40]{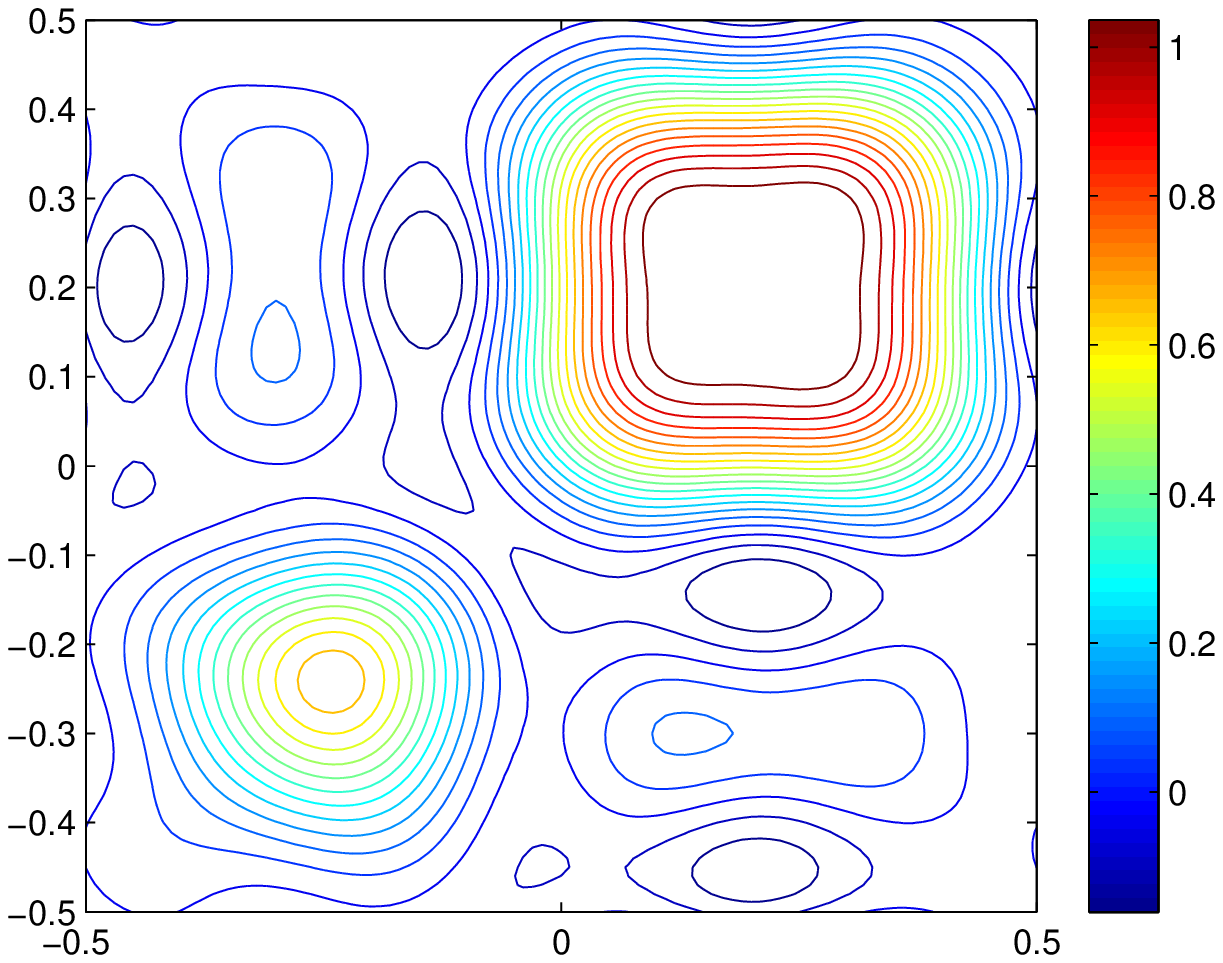}}
\subfloat[]{\includegraphics[scale=0.40]{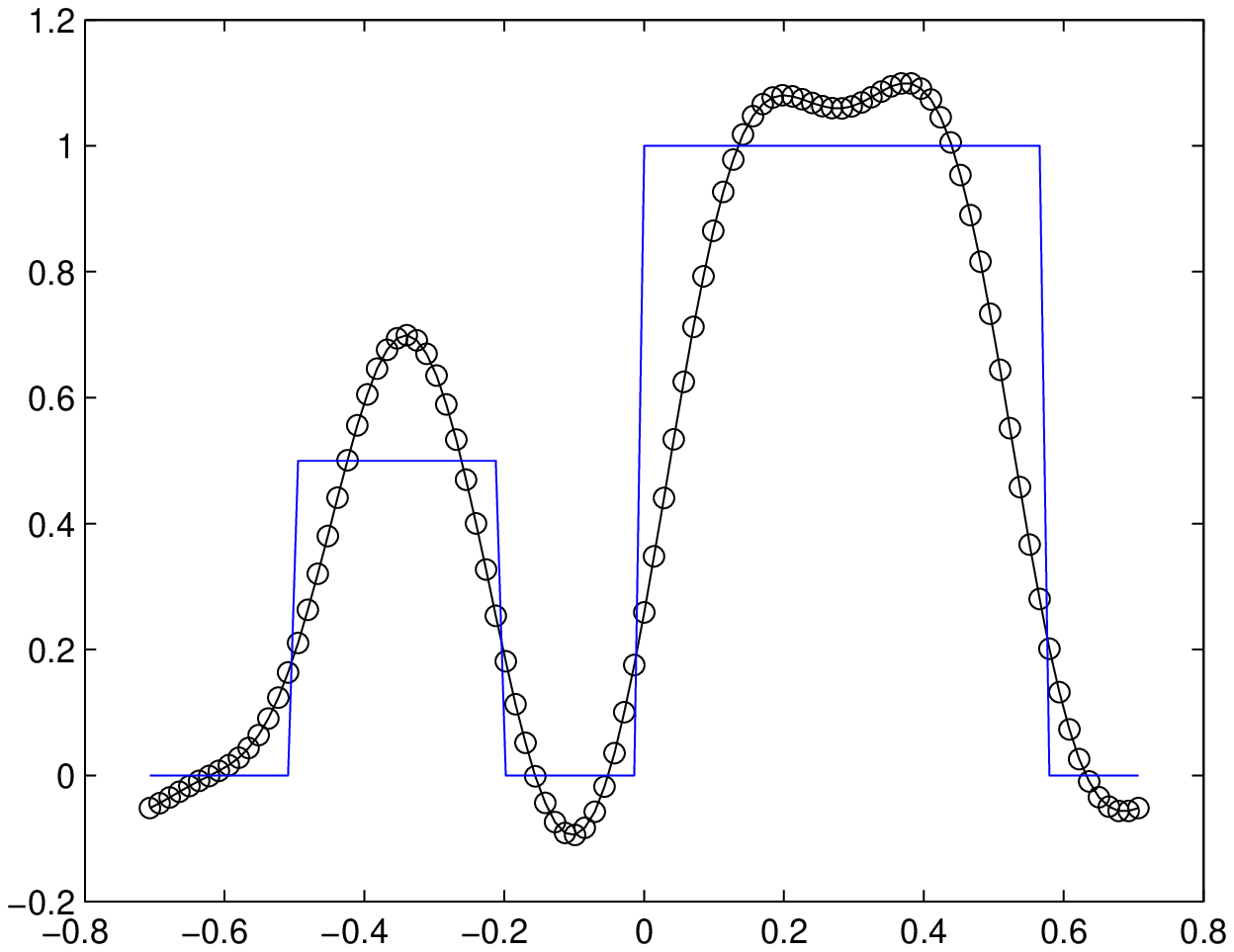}} \\
\subfloat[]{\includegraphics[scale=0.40]{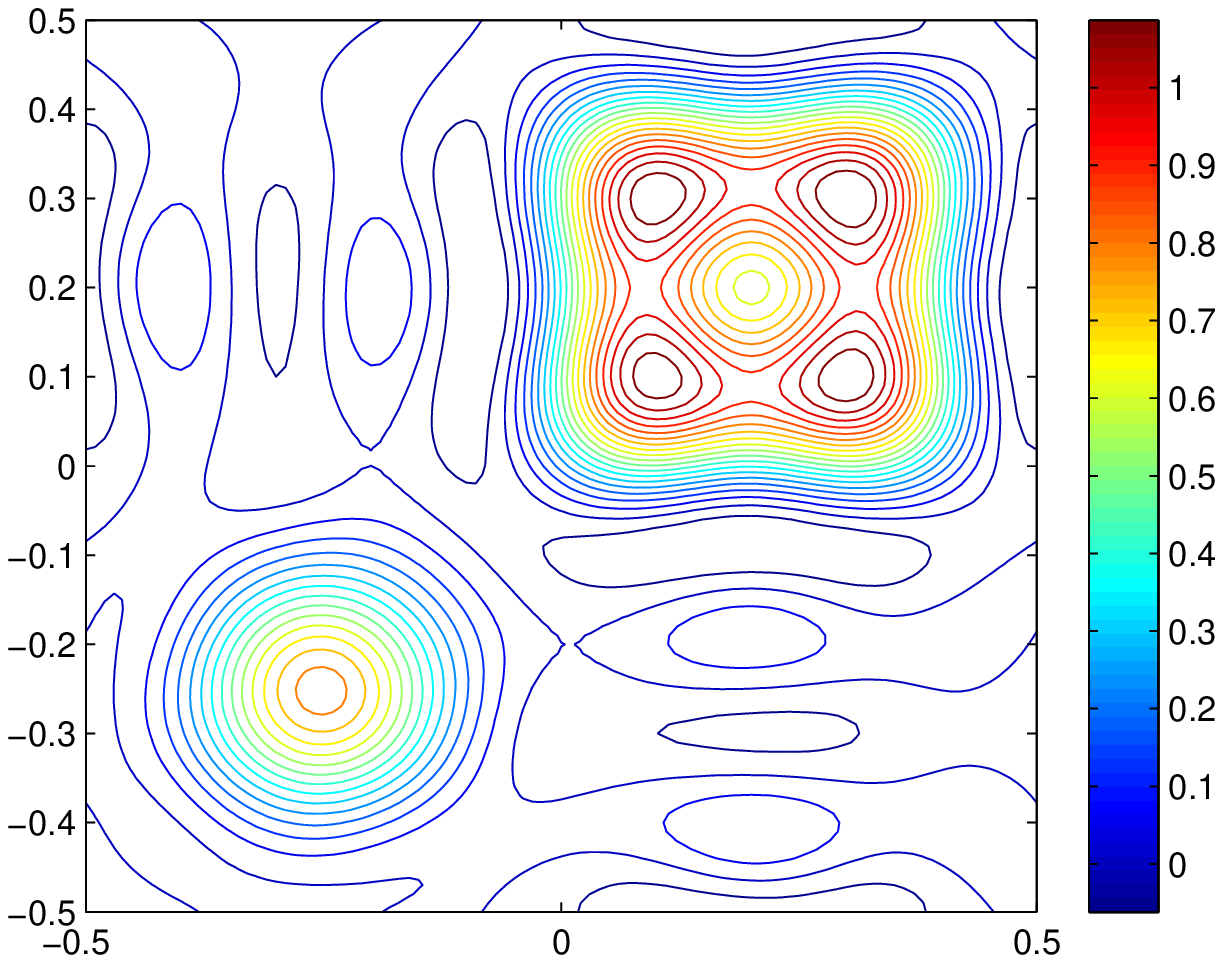}}
\subfloat[]{\includegraphics[scale=0.40]{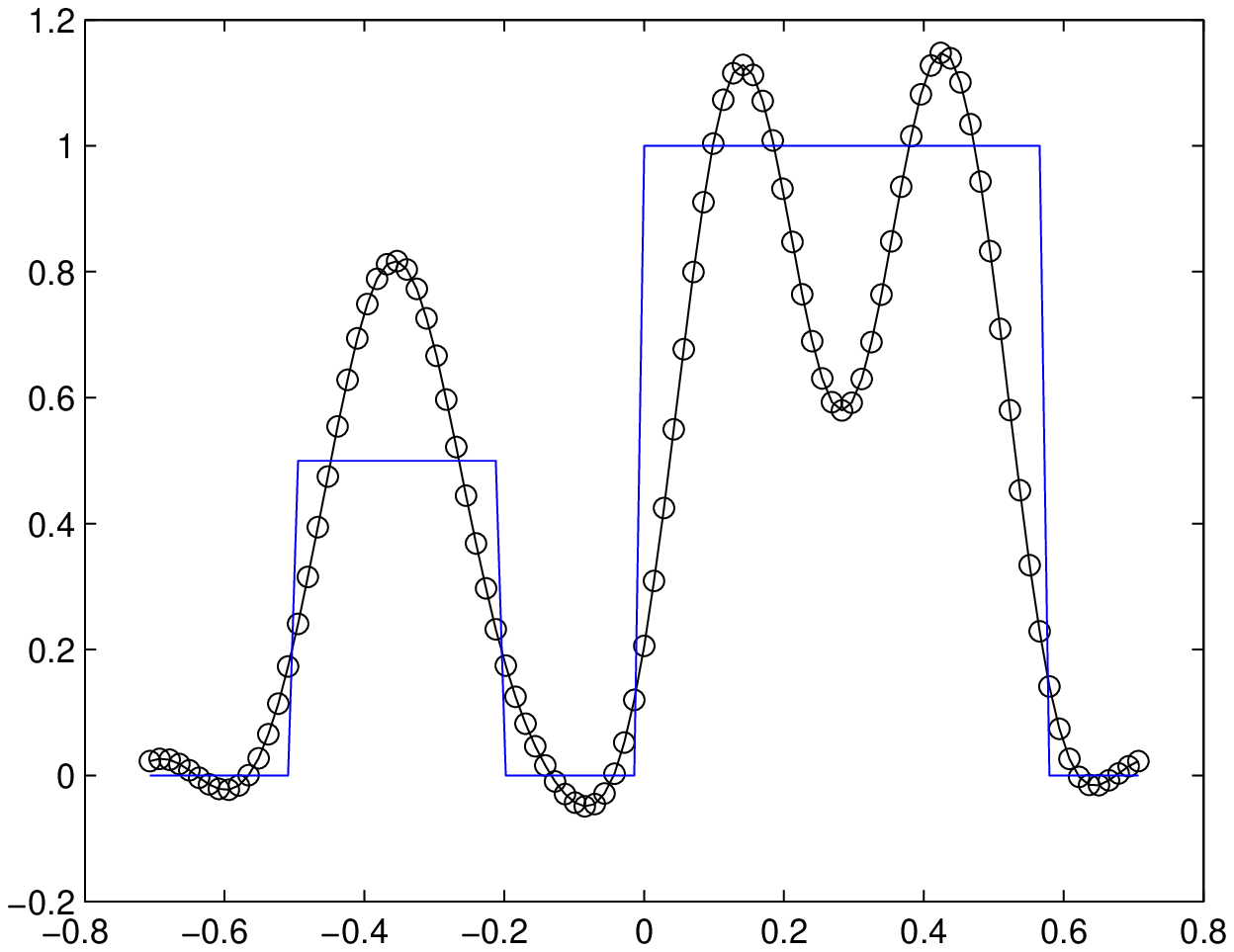}} \\
\subfloat[]{\includegraphics[scale=0.40]{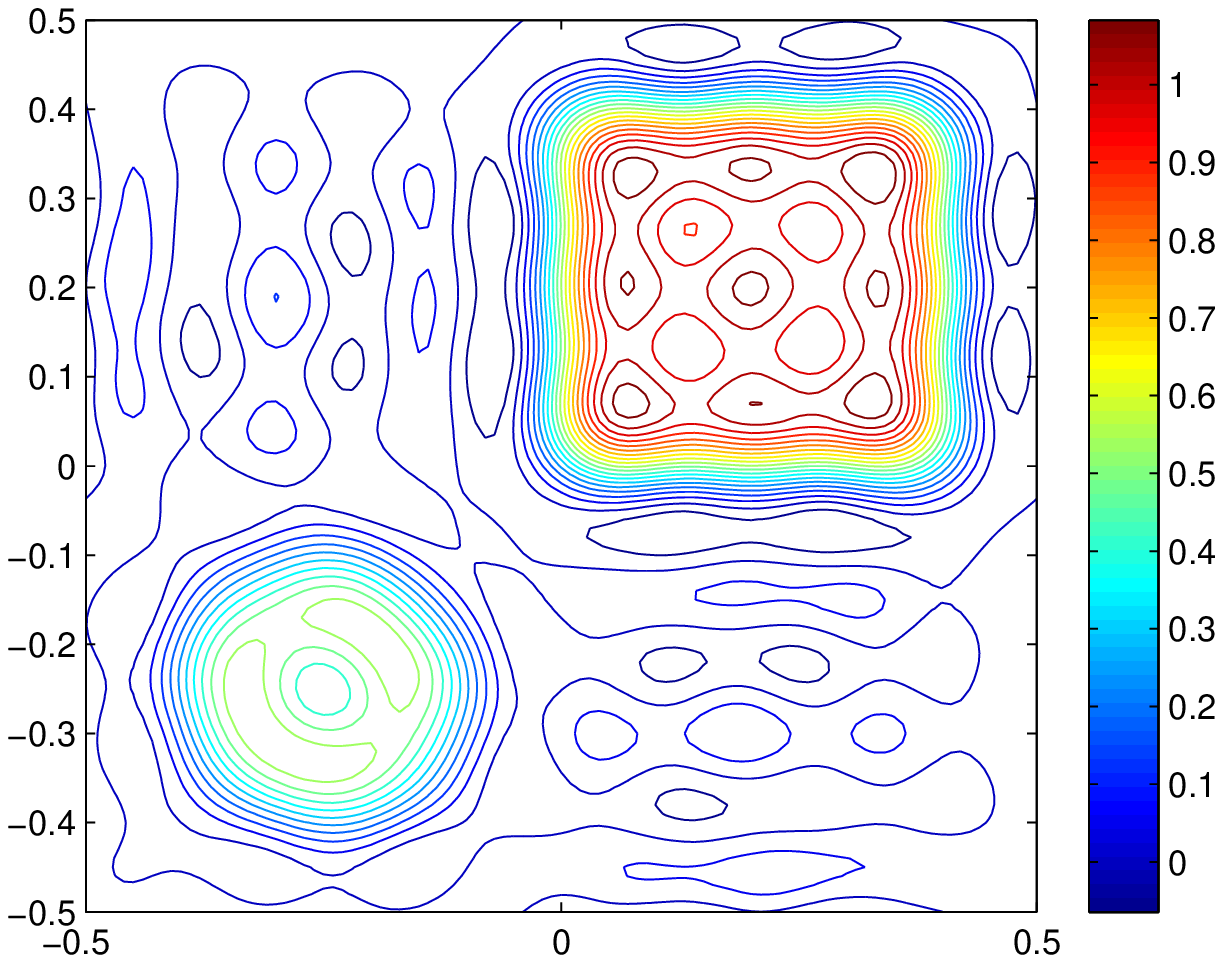}}
\subfloat[]{\includegraphics[scale=0.40]{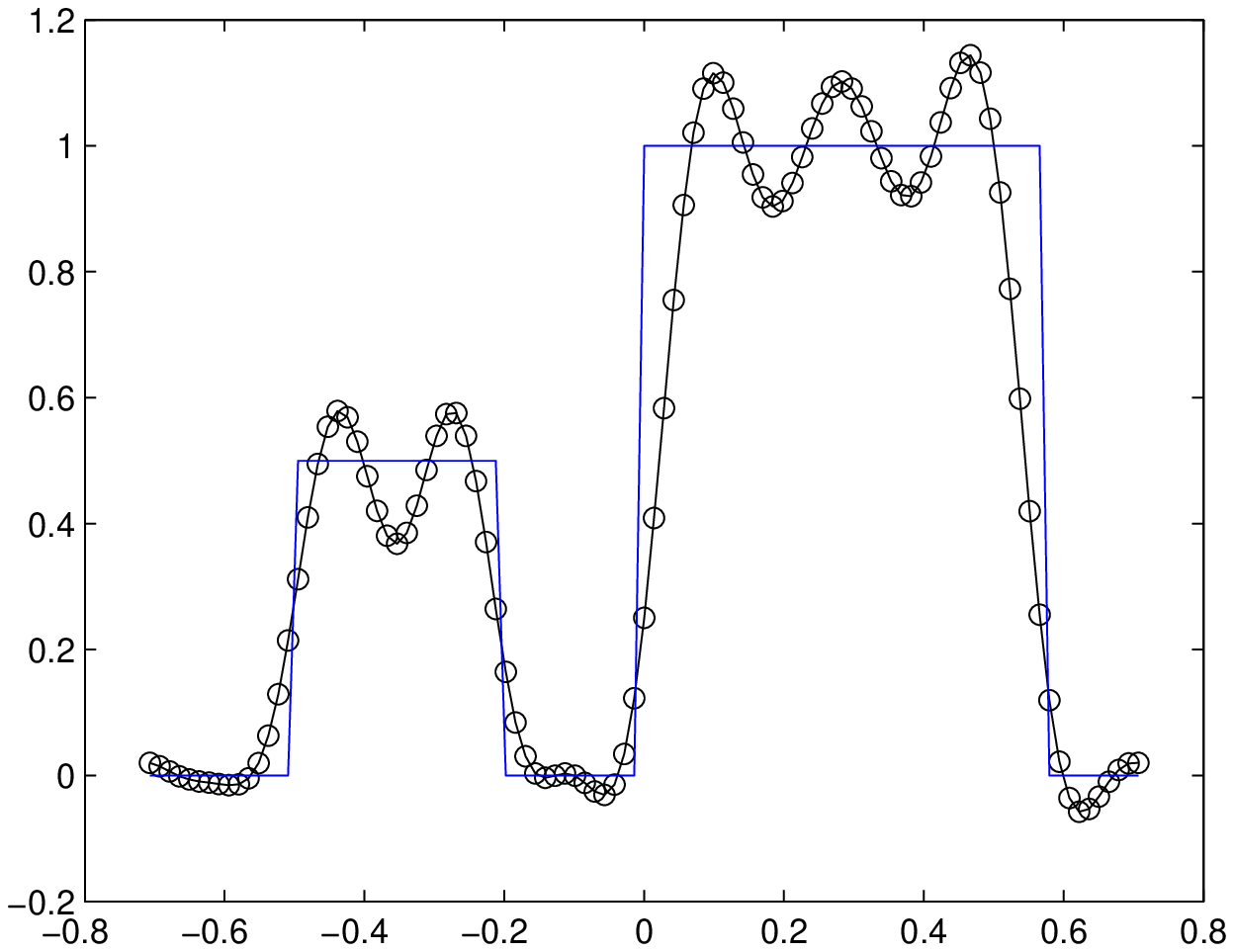}}
\caption{The reconstruction of the source $\vec{J}_2$ with different noise levels $\delta$. The cross section plots are depicted at $x_3=0$ and $x_1=x_2$.
(a) $\delta=0.1$, contour plot;
(b) $\delta=0.1$, cross section plot;
(c) $\delta=0.05$, contour plot;
(d) $\delta=0.05$, cross section plot;
(e) $\delta=0.01$, contour plot;
(f) $\delta=0.01$, cross section plot. }
\label{fig:2}
\end{figure}

\noindent\textbf{Example 3.} Finally, we aim to test the method for a different polarization direction. In this example,  we are going to reconstruct the source function of the form
\[
    \vec{J}_3=\vec{p}_3 f_3+\vec{p}_3\times \nabla g_3
\]
where $\vec{p}_3=(\sqrt{5},-1,\sqrt{3})^{\top}/3$, and
\begin{align*}
  & f_3(x_1, x_2, x_3)=3\exp(-80((x_1-0.15)^2+(x_2-0.15)^2+x_3^2)), \\
  & g_3(x_1, x_2, x_3)=0.3\exp(-40(x_1^2+x_2^2+x_3^2)).
\end{align*}

Figure \ref{fig:3} shows the second component of the reconstruction $\vec{J}_{3,N}^{\delta}$ for the source $\vec{J}_3$ with  $N=6$ and $\delta=0.1$.
\begin{figure}
\centering
\subfloat[]{\includegraphics[scale=0.40]{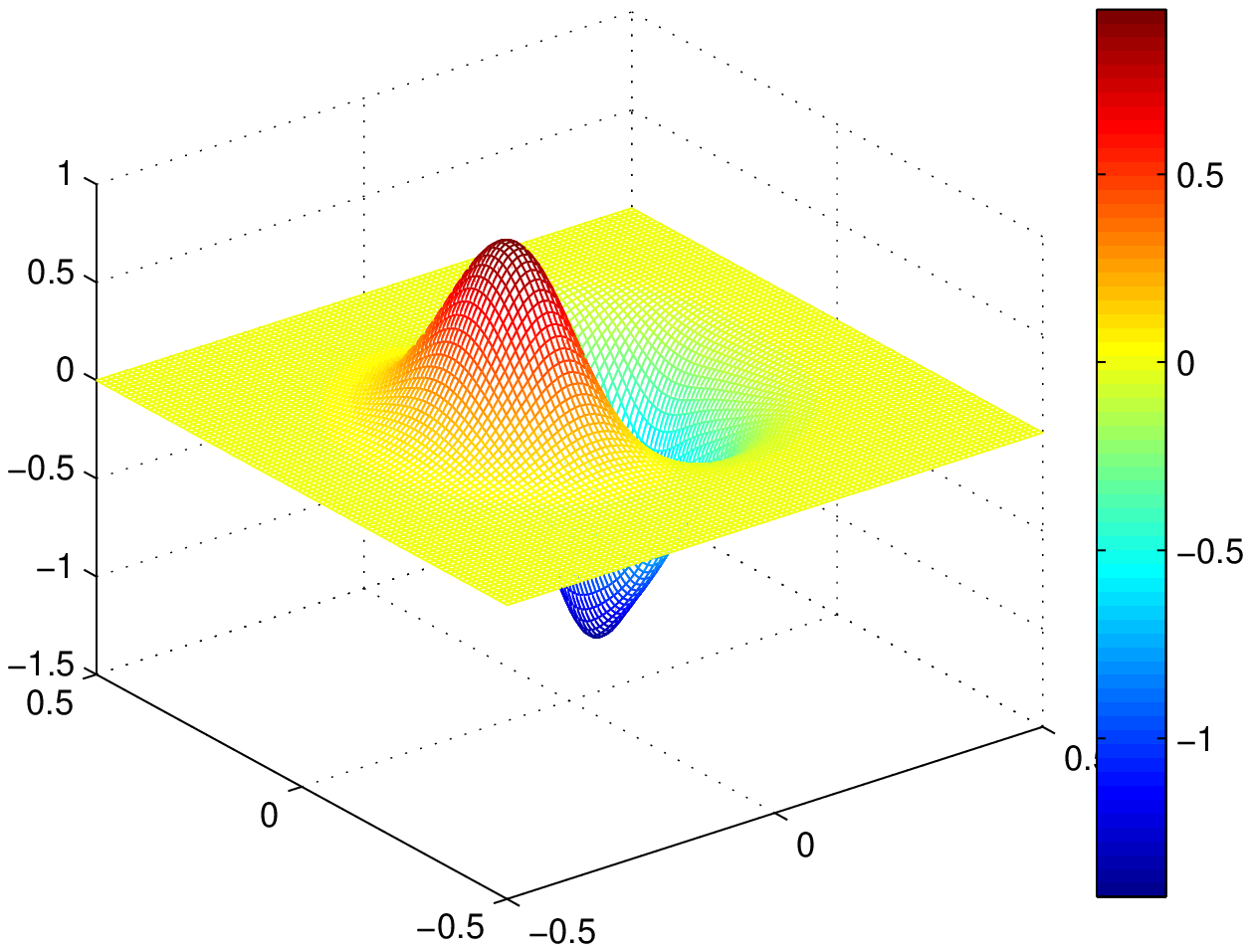}}
\subfloat[]{\includegraphics[scale=0.40]{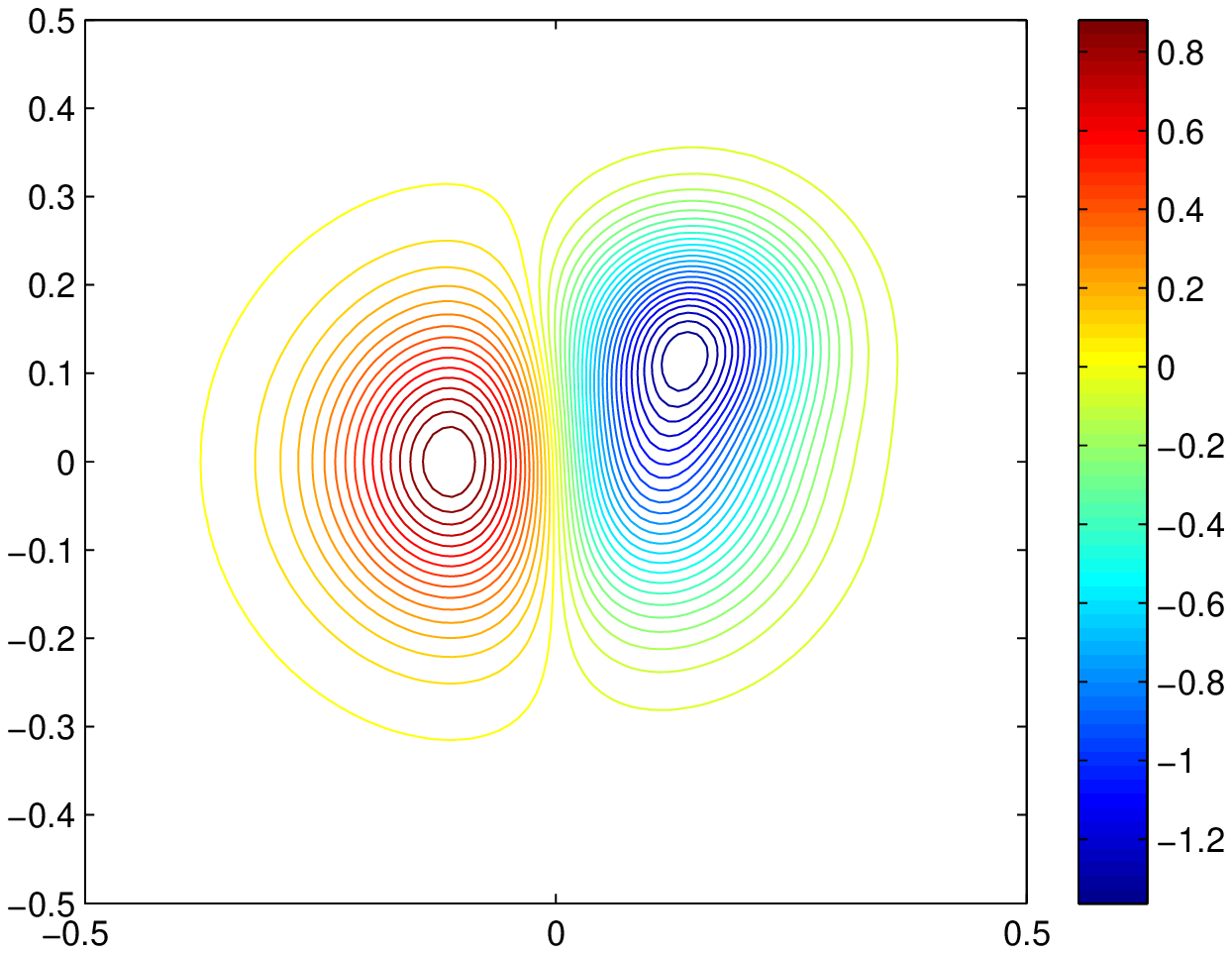}} \\
\subfloat[]{\includegraphics[scale=0.40]{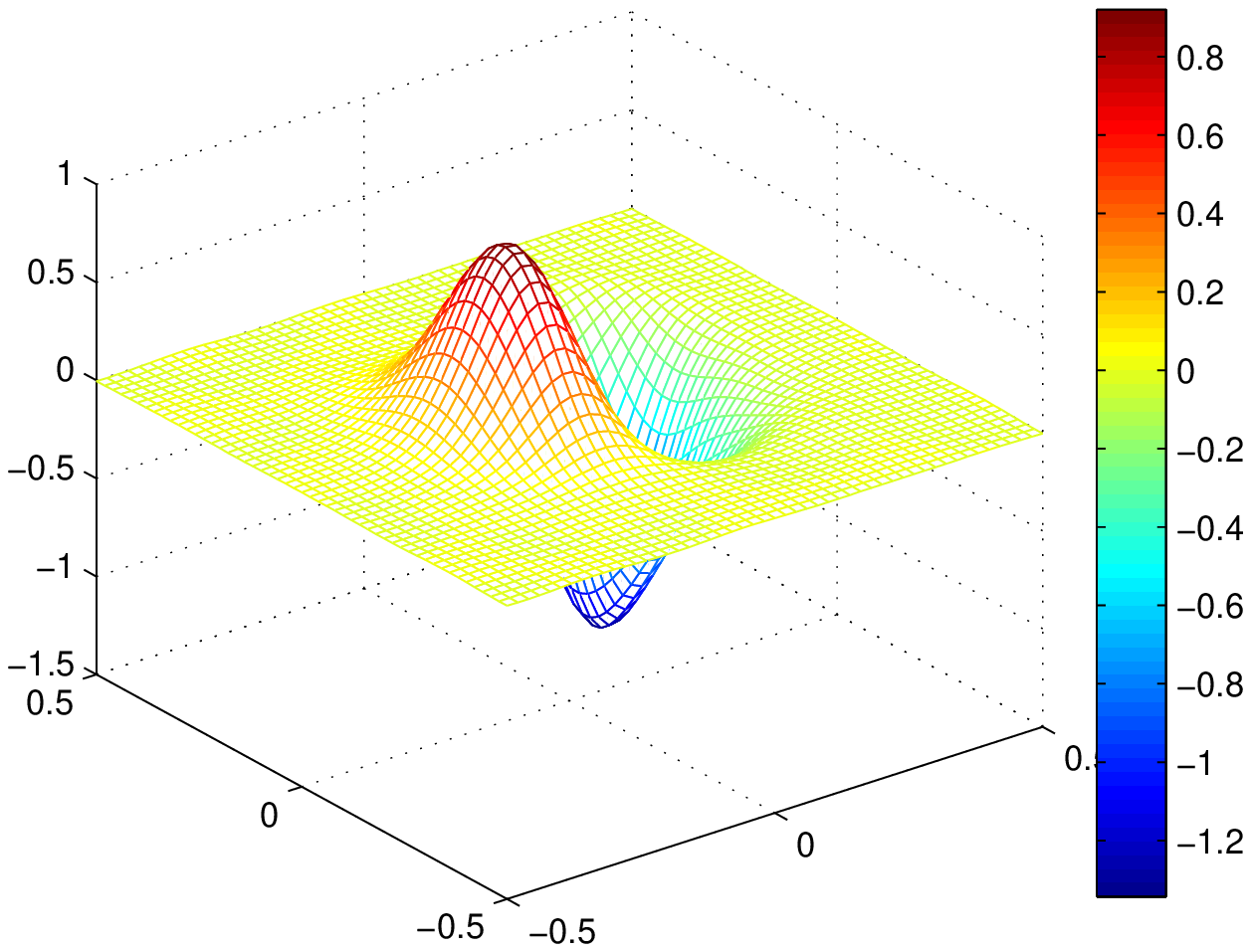}}
\subfloat[]{\includegraphics[scale=0.40]{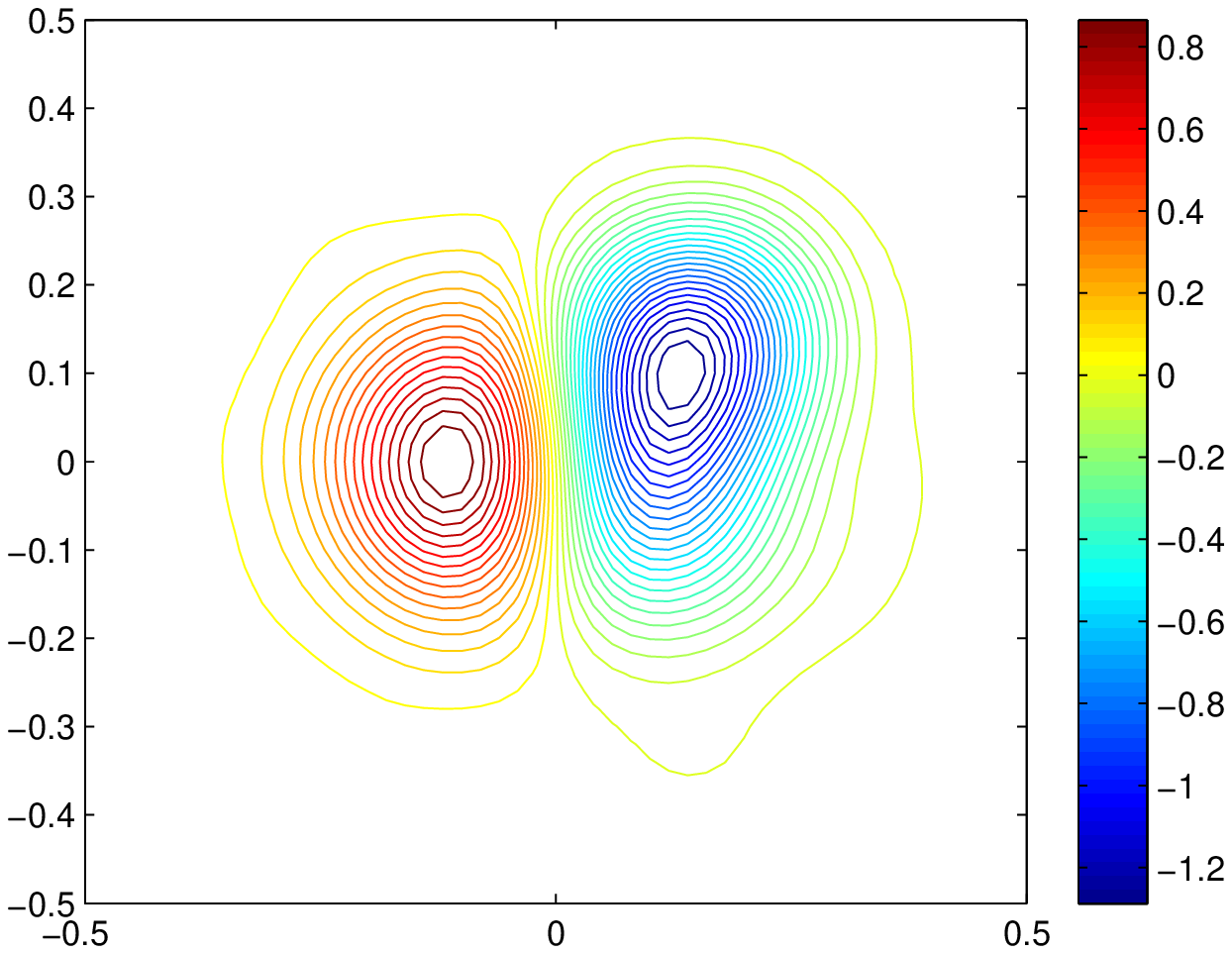}}
\caption{The second component of the reconstruction of $\vec{J}_3$ with $N = 6$ and
$\delta=0.1$. (a) exact, surface plot; (b) exact, contour plot; (c) reconstructed, surface plot; (d) reconstructed, contour plot.}
\label{fig:3}
\end{figure}

\section*{Acknowledgements}
 The work of G. Wang and F. Ma were supported by the NSF grant of China under 11371172. The work of Y. Guo was supported by the NSF grants of China under 11601107, 11671111 and 41474102. The work of J. Li was supported by the NSF grant of China under 11571161, the Shenzhen Sci-Tech Fund under JCYJ20160530184212170 and the SUSTech Startup fund. The authors would also wish to thank Prof. Deyue Zhang for his suggestions that improved the quality of this paper.


\end{document}